\definecolor{webgreen}{rgb}{0,.5,0}
\definecolor{webbrown}{rgb}{.6,0,0}
\newcommand{\esup}{\mathop{\text{ess\;sup}}}    
\renewcommand{\P}{\mathbb{P}}                   
\newcommand{\E}{\mathbb{E}}                     
\newcommand{\Em}{\mathcal{E}}                   
\newcommand{\F}{\mathcal{F}}                    
\newcommand{\Fb}{\mathbb{F}}                    
\newcommand{\Mh}{\mathcal{M}}                      
\newcommand{\R}{\mathbb{R}}                     
\newcommand{\I}{\bm{1}}                         
\newcommand{\II}[1]{\bm{1}_{\left\{#1\right\}}} 
\newcommand{\bpi}{\pi}                    
\newcommand{\bPi}{\Pi}                          
\newcommand{\e}{e}
\newcommand{\T}{\mathbb{T}}
\newcommand{\M}{\mathbb{M}}                     
\newtheorem{theorem}{theorem}
\newtheorem{proposition}[theorem]{Proposition}
\newtheorem{lemma}[theorem]{lemma}
\newtheorem{definition}[theorem]{Definition}
\newtheorem{corollary}[theorem]{Corollary}
\newtheorem{remark}[theorem]{Remark}
\numberwithin{equation}{section}
\numberwithin{theorem}{section}
\date{August 2006}
\author{SAVAS DAYANIK}
\address[S.\ Dayanik and C.\ Goulding]{Department of Operations Research and
  Financial Engineering, and the Bendheim Center for Finance\\
  Princeton University, Princeton, NJ 08544}
\email{sdayanik@princeton.edu,cgouldin@princeton.edu}
 \author{CHRISTIAN GOULDING}
\author{H.\ Vincent POOR}
\address[H.\ V.\ Poor]{School of Engineering and Applied Science,
  Princeton University, Princeton, NJ 08544}
\email{poor@princeton.edu}
\begin{document}
\begin{abstract}
  Sequential change diagnosis is the joint problem of detection and
  identification of a sudden and unobservable change in the
  distribution of a random sequence.
  In this problem, the
  common probability law of a sequence of i.i.d.\ random variables
  suddenly changes at some disorder time to one of finitely many
  alternatives.  This disorder time marks the start of a new regime,
  whose fingerprint is the new law of observations.  Both the disorder
  time and the identity of the new regime are unknown and
  unobservable.  The objective is to detect the regime-change as soon
  as possible, and, at the same time, to determine its identity as
  accurately as possible.  Prompt and correct diagnosis is crucial for
  quick execution of the most appropriate measures in response to the
  new regime, as in fault detection and isolation in industrial
  processes, and target detection and identification in national defense.
  The problem is formulated in a Bayesian framework.
  An optimal sequential decision strategy is found, and an accurate
  numerical scheme is described for its implementation.  Geometrical
  properties of the optimal strategy are illustrated via numerical
  examples.  The traditional problems of Bayesian change-detection and
  Bayesian sequential multi-hypothesis testing are
  solved
  as special cases.
  In addition, a solution is obtained for the problem
  of detection and identification of component failure(s)
  in a system with suspended animation.

\end{abstract}

\title{Bayesian sequential change diagnosis}
\maketitle

\section{Introduction}
\label{sec:Introduction}

Sequential change diagnosis is the joint problem of detection and
identification of a sudden change in the
distribution of a random sequence.  In this problem, one observes
a sequence of i.i.d.\ random variables $X_1, X_2, \ldots$, taking
values in some measurable space $(E,\Em)$. The common probability
distribution of the $X$'s is initially some known probability
measure $\P_0$ on $(E,\Em)$, and, in the terminology of
statistical process control, the system is said to be ``in
control.''  Then, at some \emph{unknown} and \emph{unobservable}
disorder time $\theta$, the common probability distribution
changes suddenly to another probability measure $\P_{\mu}$ for
some \emph{unknown} and \emph{unobservable} index $\mu\in \Mh
\triangleq \{1,\ldots,M\}$, and the system goes ``out of control."
The objective is to detect the change as quickly as possible, and,
at the same time, to
identify
the new probability distribution as
accurately as possible, so that the most suitable actions can be
taken with the least delay.

Decision strategies for this problem have a wide array of
applications, such as fault detection and isolation in industrial
processes, target detection and identification in national
defense, pattern recognition and machine learning, radar and sonar
signal processing, seismology, speech and image processing,
biomedical signal processing,
finance, and insurance.
 For example, suppose we perform a quality
test on each item produced from a manufacturing process
consisting of several complex processing components (labeled $1,
2,\ldots, M$).  As long as each processing component is operating
properly, we can expect the distribution of our quality test
statistic to be stationary. Now, if there occurs a sudden fault in
one of the processing components, this can change the distribution
of our quality test statistic depending on the processing
component which caused the fault.  It may be costly to continue
manufacture of the items at a substandard quality level, so we
must decide when to (temporarily) shut down the manufacturing
process and repair the fault. However, it may also be expensive to
dissect each and every processing component in order to identify
the source of the failure and to fix it. So, not only do we want
to detect quickly when a fault happens, but, at the same time we
want also to identify accurately which processing component is the
cause.  The time and the cause of the fault will be distributed
independently according to a geometric and a finite distribution,
respectively, if each component fails independently according to
some geometric distributions, which is a reasonable assumption for
highly reliable components;
see Section~\ref{sec:suspended-animation}.
 As another example, an insurance
company may monitor reported claims not only to detect a change in
its risk exposure, but also to assess the nature of the change so
that it can adjust its premium schedule or re-balance
appropriately its portfolio of reserves to hedge against a
different distribution of loss scenarios.

Sequential change diagnosis can be viewed as the fusion of two
fundamental areas of sequential analysis: change detection and
multi-hypothesis testing.  In traditional change detection problems,
$M=1$ and there is only one change distribution, $\P_1$; therefore,
the focus is exclusively on detecting the change time, whereas in
traditional sequential multi-hypothesis testing problems, there is no
change time to consider.  Instead, every observation has common
distribution $\P_\mu$ for some unknown $\mu$, and the focus is
exclusively on the inference of $\mu$.  Both change detection and
sequential multi-hypothesis testing have been studied extensively. For
recent reviews of these areas, we refer the reader to Basseville and
Nikiforov \cite{MR1210954}, Dragalin, Tartakovsky and Veeravalli
\cite{MR1725130,MR1768555}, and Lai \cite{MR1844531}, and the
references therein.

However, the sequential change diagnosis problem involves key
trade-off decisions not taken into account by separately applying
techniques for change detection and sequential multi-hypothesis
testing.  While raising an alarm as soon as the change occurs is
advantageous for the change detection task, it is undesirable for
the isolation task because the longer one waits to raise the
alarm, the more observations one has to use for inferring the
change distribution.  Moreover, the unknown change time
complicates the isolation task, and, as a result, adaptation of
existing sequential multi-hypothesis testing algorithms is
problematic.

The theory of sequential change diagnosis has not been broadly
developed.  Nikiforov~\cite{Nikiforov1995} provides the first
results for this problem, showing asymptotic optimality for a
certain non-Bayesian approach, and Lai~\cite{Lai2000} generalizes
these results through the development of information-theoretic
bounds and the application of likelihood methods.  In this paper,
we follow a Bayesian approach to reveal a new sequential decision
strategy for this problem, which incorporates a~priori knowledge
regarding the distributions of the change time $\theta$ and of the
change index $\mu$. We prove that this strategy is optimal and we
describe an accurate numerical scheme for its implementation.

In Section \ref{sec:Problem-statement} we formulate precisely the
problem in a Bayesian framework, and in Section
\ref{sec:Reformulation} we show that it can be reduced to an optimal
stopping of a Markov process whose state space is the standard
probability simplex.  In addition, we establish a simple recursive
formula that captures the dynamics of the process and yields a
sufficient statistic fit for online tracking.

In Section \ref{sec:dynamic-programming-solution} we use optimal
stopping theory to substantiate the optimality equation for the value
function of the optimal stopping problem.  Moreover, we prove that
this value function is bounded, concave, and continuous on the
standard probability simplex.  Furthermore, we prove that the optimal
decision strategy uses a finite number of observations on average and
we establish some important characteristics of the associated optimal
stopping/decision region.  In particular, we show that the optimal
stopping region of the state space for the problem consists of $M$
non-empty, convex, closed, and bounded subsets.  Also, we consider a
truncated version of the problem that allows at most $N$ observations
from the sequence of random measurements.  We establish an explicit
bound (inversely proportional to $N$) for the approximation error
associated with this truncated problem.

In Section \ref{sec:Special-cases} we show that the separate problems
of change detection and sequential multi-hypothesis testing are solved
as special cases of the overall joint solution.  We illustrate some
geometrical properties of the optimal method and demonstrate its
implementation by numerical examples for the special cases $M=2$ and
$M=3$.  Specifically, we show instances in which the $M$ convex
subsets comprising the optimal stopping region are connected and
instances in which they are not.  Likewise, we show that the
continuation region (i.e., the complement of the stopping region) need
not be connected.  We provide a solution to the problem of detection
and identification of component failure(s) in a system with suspended
animation. Finally, we outline in Section
\ref{sec:computer-implementation} how the change-diagnosis algorithm
may be implemented with a computer in general. Proofs of most results
are deferred to the Appendix.

\section{Problem statement}
\label{sec:Problem-statement}

Let $(\Omega,\F, \P)$ be a probability space hosting random variables
$\theta:\Omega\mapsto\{0,1,\ldots\}$ and $\mu:\Omega\mapsto \Mh
\triangleq \{1,\ldots,M\}$ and a process $X = (X_n)_{n\geq1}$ taking
values in some measurable space $(E,\Em)$. Suppose that for every
$t\ge 1$, $i \in \Mh$, $n\ge 1$, and $(E_k)^n_{k=1}\subseteq \Em $
\begin{multline}
  \label{eq:finite-dimensional-distributions}
  \P\left\{\theta=t, \mu=i, X_1 \in E_1,\ldots, X_n \in E_n \right\}  \\
  = (1-p_0) (1-p)^{t-1} p \nu_i \prod_{1\le k \le (t-1) \land n}
  \P_0(E_k) \prod_{t\lor 1\le \ell \le n} \P_i(E_{\ell})
\end{multline}
for some given probability measures $\P_0,\P_1,\ldots,\P_M$ on
$(E,\Em)$, known constants $p_0\in[0,1]$, $p\in(0,1)$, and
$\nu_i>0,i\in \Mh$ such that $\nu_1+\cdots+\nu_M = 1$, where $x\wedge
y\triangleq \min\{x,y\}$ and $x\vee y\triangleq \max\{x,y\}$.  Namely,
$\theta$ is independent of $\mu$; it has a zero-modified geometric
distribution with parameters $p_0$ and $p$ in the terminology of
Klugman, Panjer, and Willmot \cite[Sec.\ 3.6]{MR1490300}, which
reduces to the standard geometric distribution with success
probability $p$ when $p_0=0$. Moreover, $\nu_i$ is the probability
that the change type $\mu$ is $i$ for every $i=1,\ldots,M$.

Conditionally on $\theta$ and $\mu$, the random variables $X_n$, $n\ge
1$ are independent; $X_1,\ldots, X_{\theta-1}$ and $X_{\theta},
X_{\theta+1},\ldots$ are identically distributed with common
distributions $\P_0$ and $\P_{\mu}$, respectively.  The probability
measures $\P_0,\P_1,\ldots,\P_M$ always admit densities with respect
to some $\sigma$-finite measure $m$ on $(E,\Em)$; for example, we can
take $m = \P_0+\P_1\cdots+\P_M$.  So, we fix $m$ and denote the
corresponding densities by $f_0, f_1,\ldots,f_M$, respectively.

Suppose now that we observe sequentially the random variables $X_n$,
$n\ge 1$. Their common probability density function $f_0$ changes at
stage $\theta$ to some other probability density function $f_{\mu}$,
$\mu\in \Mh$. Our objective is to detect the change time $\theta$ as
quickly as possible \emph{and} isolate the change index $\mu$ as
accurately as possible.  More precisely, given costs associated with
detection delay, false alarm, and false isolation of the change index,
we seek a strategy that minimizes the expected total change detection
\emph{and} isolation cost.

In view of the fact that the observations arrive sequentially, we are
interested in sequential diagnosis schemes. Specifically, let
$\Fb = (\F_n)_{n\geq0}$ denote the natural filtration of the
observation process $X$, where
\begin{align*}
  \F_0=\{\varnothing,\Omega\}\quad\text{and}\quad
  \F_n=\sigma(X_1,\ldots,X_n),\quad n\geq1.
\end{align*}
A \emph{sequential decision strategy} $\delta=(\tau, d)$ is a pair
consisting of a \emph{stopping time (or stopping rule)} $\tau$ of the
filtration $\Fb$ and a \emph{terminal decision rule} $d: \Omega
\mapsto \Mh$ measurable with respect to the history
$\F_{\tau}=\sigma(X_{n\wedge\tau}; n\geq1)$ of observation process $X$
through stage $\tau$.  Applying a sequential decision strategy
$\delta=(\tau,d)$ consists of announcing at the end of stage $\tau$
that the common probability density function has changed from $f_0$ to
$f_d$ at or before stage $\tau$. Let
\begin{align*}
  \Delta \triangleq \{(\tau,d) \mid \tau\in\Fb, \text{ and
    $d\in\F_\tau$ is an $\Mh$-valued random variable}\}
\end{align*}
denote the collection of all such sequential decision strategies
(``$\tau \in \Fb$'' means that $\tau$ is a stopping time of filtration
$\Fb$).  Let us specify the possible losses associated with a
sequential decision strategy $\delta=(\tau,d)\in\Delta$ as follows:
\begin{enumerate}
\item \emph{Detection delay loss.}  Let us denote by a fixed positive
  constant $c$ the detection delay cost per period.  Then the expected
  decision delay cost for $\delta$ is $\E[c(\tau-\theta)^+]$, possibly
  infinite, where $(x)^+\triangleq\max\{x,0\}$.
\item \emph{Terminal decision loss.}  Here we identify two cases of
  isolation loss depending on whether or not the change has actually
  occurred at or before the stage in which we announce the isolation
  decision:
  \begin{enumerate}
  \item \emph{Loss due to false alarm.}  Let us denote by $a_{0j}$ the
    isolation cost on $\{\tau<\theta, d=j\}$ for every $j\in \Mh$.
    Then the expected false alarm cost for $\delta$ is
    $\E[a_{0d}\II{\tau<\theta}]$.
  \item \emph{Loss due to false isolation.}  Let us denote by $a_{ij}$
    the isolation cost on the event $\{\theta\leq\tau<\infty, d=j,
    \mu=i\}$ for every $i,j\in\Mh$.  Then the expected false isolation
    cost for $\delta$ is $\E[a_{\mu d}\II{\theta\leq\tau<\infty}]$.
  \end{enumerate}
  Here, $a_{ij}, i,j\in \Mh$ are known nonnegative constants, and
  $a_{ii}=0$ for every $i\in \Mh$; i.e., no cost incurred for making a
  correct terminal decision.
\end{enumerate}

Accordingly, for every sequential decision strategy $\delta=(\tau,
d)\in\Delta$, we define a \emph{Bayes risk function}
\begin{align}
  R(\delta) = c\,\E[(\tau-\theta)^+] + \E[a_{0
    d}\II{\tau<\theta}+a_{\mu
    d}\II{\theta\leq\tau<\infty}]\label{E:BayesRiskUnderP}
\end{align}
\noindent as the expected diagnosis cost: the sum of the expected
detection delay cost and the expected terminal decision cost upon
alarm.  The problem is to find a sequential decision strategy
$\delta=(\tau,d)\in\Delta$ (if it exists) with the \emph{minimum Bayes
  risk}
\begin{align}
  R^* \triangleq \inf_{\delta\in\Delta} R(\delta).\label{E:UDef1}
\end{align}

\section{Posterior analysis and formulation as an optimal stopping
  problem}
\label{sec:Reformulation}

In this section we show that the Bayes risk function in
(\ref{E:BayesRiskUnderP}) can be written as the expected value of the
running and terminal costs driven by a certain Markov process.  We use
this fact to recast the minimum Bayes risk in (\ref{E:UDef1}) as a
Markov optimal stopping problem.

Let us introduce the posterior probability processes
\begin{align*}
  \Pi_n^{(0)} &\triangleq
  \P\{\theta>n\,|\,\F_n\}\quad\text{and}\quad
  \Pi_n^{(i)} \triangleq \P\{\theta\leq n,\mu = i\,|\,\F_n\},\quad
  i\in \Mh, \; n\geq 0.
\end{align*}
Having observed the first $n$ observations, $\Pi_n^{(0)}$ is the
posterior probability that the change \emph{has not} yet occurred at
or before stage $n$, while $\Pi_n^{(i)}$ is the posterior joint
probability that the change \emph{has} occurred by stage $n$ and that
the hypothesis $\mu=i$ is correct.  The connection of these posterior
probabilities to the loss structure for our problem is established in
the next proposition. 

\begin{proposition}\label{P:BayesRiskInTermsOfPi}
  For every sequential decision strategy $\delta\in\Delta$, the Bayes
  risk function (\ref{E:BayesRiskUnderP}) can be expressed in terms of
  the process $\bPi\triangleq\{ \bPi_n= (\Pi_n^{(0)}, \ldots,
  \Pi_n^{(M)})\}_{n\geq 0}$ as
  \begin{align*}
    R(\delta) = \E\left[ \sum_{n=0}^{\tau-1}c\,(1-\Pi_n^{(0)})
      +\II{\tau<\infty}\sum_{j=1}^{M}\II{d=j}\sum_{i=0}^{M}
      a_{ij}\Pi_{\tau}^{(i)}\right].
  \end{align*}
\end{proposition}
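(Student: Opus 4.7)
The plan is to rewrite each of the three cost components in (\ref{E:BayesRiskUnderP}) as an $\F_n$-conditional expectation and then recognize the conditional probabilities appearing inside as the posterior processes $\Pi_n^{(0)}$ and $\Pi_n^{(i)}$. The main tool throughout is the tower property combined with the fact that $\{\tau=n\}\in\F_n$ and $\{d=j,\tau=n\}\in\F_n$ (because $d\in\F_\tau$).

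\medskip

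First I would handle the detection-delay term. Writing $(\tau-\theta)^+ = \sum_{n=0}^{\infty}\II{\theta\le n}\II{n<\tau}$ and using $\{n<\tau\}\in\F_n$ together with the definition of $\Pi_n^{(0)}$ gives
\begin{align*}
  \E[c(\tau-\theta)^+]
  = c\sum_{n=0}^{\infty}\E\bigl[\II{n<\tau}\,\P\{\theta\le n\mid\F_n\}\bigr]
  = \E\left[\sum_{n=0}^{\tau-1} c\,(1-\Pi_n^{(0)})\right],
\end{align*}
where a Fubini/Tonelli step is used to exchange sum and expectation (justified as the summand is nonnegative).

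\medskip

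Next I would treat the false-alarm term. Splitting on $\{d=j\}$ and $\{\tau=n\}$, and using that $\II{d=j,\tau=n}\in\F_n$,
\begin{align*}
  \E[a_{0d}\II{\tau<\theta}]
  = \sum_{j=1}^M a_{0j}\sum_{n=0}^{\infty}
      \E\bigl[\II{d=j,\tau=n}\,\P\{\theta>n\mid\F_n\}\bigr]
  = \E\!\left[\II{\tau<\infty}\sum_{j=1}^M\II{d=j}\,a_{0j}\,\Pi_\tau^{(0)}\right].
\end{align*}
The false-isolation term is entirely analogous: on $\{\tau=n\}$, the event $\{\theta\le\tau,\mu=i\}$ reduces to $\{\theta\le n,\mu=i\}$, whose $\F_n$-conditional probability is exactly $\Pi_n^{(i)}$, yielding
\begin{align*}
  \E[a_{\mu d}\II{\theta\le\tau<\infty}]
  = \E\!\left[\II{\tau<\infty}\sum_{j=1}^M\II{d=j}\sum_{i=1}^M a_{ij}\,\Pi_\tau^{(i)}\right].
\end{align*}
Adding the last two displays gives the terminal-cost part of the claim, with the inner sum over $i$ now running from $0$ to $M$, and combining with the detection-delay expression produces the stated formula.

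\medskip

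I do not expect any serious obstacle: the proof is essentially bookkeeping through the tower property once one notes that all the relevant random indicators are $\F_n$-measurable at the right stages. The only point requiring a little care is the Fubini exchange for the detection-delay term and making sure the $i=0$ contribution to the inner sum is correctly identified as the false-alarm term (so that the two terminal-cost pieces combine into a single sum $\sum_{i=0}^{M} a_{ij}\Pi_\tau^{(i)}$).
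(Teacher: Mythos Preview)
Your proposal is correct and matches the paper's own proof essentially line for line: the paper also rewrites $(\tau-\theta)^+$ as $\sum_{n\ge 0}\II{\theta\le n<\tau}$ and conditions on $\F_n$, then handles the terminal terms by splitting over $\{\tau=n,d=j\}\in\F_n$ and recognizing the resulting conditional probabilities as $\Pi_\tau^{(0)}$ and $\Pi_\tau^{(i)}$. The only cosmetic difference is that the paper spells out the passage from finite to infinite sums via monotone convergence rather than invoking Fubini/Tonelli, but the justification is the same.
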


While our original formulation of the Bayes risk function
(\ref{E:BayesRiskUnderP}) was in terms of the values of the
unobservable random variables $\theta$ and $\mu$, Proposition
\ref{P:BayesRiskInTermsOfPi} gives us an equivalent version of the
Bayes risk function in terms of the posterior distributions for
$\theta$ and $\mu$.  This is particularly effective in light of
Proposition \ref{P:PiProperties}, which we state with the aid of some
additional notation that is referred to throughout the paper.  Let
\begin{align*}
  S^M \triangleq
  \left\{\bpi=(\pi_0,\pi_1,\ldots,\pi_M)\in[0,1]^{M+1}\,\bigm|\,
    \pi_0+\pi_1+\cdots+\pi_M = 1 \right\}
\end{align*}
denote the standard $M$-dimensional probability simplex. Define the
mappings $D_i:S^M \times E \mapsto [0,1], i\in \Mh$ and $D:S^M \times
E \mapsto [0,1]$ by
\begin{align}
  \label{eq:D-mappings}
  D_{i}(\bpi,x) \triangleq \left\{
    \begin{aligned}
      &(1-p)\pi_0 f_0(x), && i=0\\
      &(\pi_i+\pi_0\,p\nu_i) f_i(x), && i\in \Mh
    \end{aligned}
  \right\}, \qquad D(\bpi,x)\triangleq\sum_{i=0}^{M}D_{i}(\bpi,x),
\end{align}
and the operator $\T$ on the collection of bounded functions $f:S^M
\mapsto\R$ by
\begin{align}
  \label{E:T-operator}
  (\T f)(\bpi) &\triangleq \int_{E}
  m(dx)\,D(\bpi,x)\,f\left(\frac{D_0(\bpi,x)}{D(\bpi,x)},\ldots,
    \frac{D_M(\bpi,x)}{D(\bpi,x)}\right) \text{ for every }\bpi\in
  S^M.
\end{align}

\begin{proposition}\label{P:PiProperties}
  The process $\bPi$ possesses the following properties:
  \begin{itemize}
  \item[(a)] The process
    $\bPi^{(0)}\triangleq\{\Pi_n^{(0)},\F_n\}_{n\geq 0}$ is a
    supermartingale, and $\E\,\Pi_n^{(0)} \leq (1-p)^n$ for every
    $n\geq 0$.
  \item[(b)] The process
    $\bPi^{(i)}\triangleq\{\Pi_n^{(i)},\F_n\}_{n\geq 0}$ is a
    submartingale for every $i\in \Mh$.
  \item[(c)] The process
    $\bPi=\{(\Pi_n^{(0)},\ldots,\Pi_n^{(M)})\}_{n\geq 0}$ is a Markov
    process, and
    \begin{align}
      \Pi_{n+1}^{(i)} =
      \frac{D_i(\bPi_n,X_{n+1})}{D(\bPi_n,X_{n+1})},\quad i\in
      \{0\}\cup\Mh ,\quad n\geq 0,\label{E:Pi-Dynamics}
    \end{align}
    with initial state $\Pi_{0}^{(0)} = 1-p_0$ and
    $\Pi_{0}^{(i)}=p_0\nu_i$, $i\in \Mh.$ 
    Moreover, for every bounded function $f:S^M\mapsto\R$ and $n\geq
    0$, we have $\E[f(\bPi_{n+1})|\bPi_n] = (\T f)(\bPi_n)$.
  \end{itemize}
\end{proposition}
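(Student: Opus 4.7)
The plan is to prove part (c) first; parts (a) and (b) then drop out as one-line computations against the transition operator $\T$.

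For (c), I would apply Bayes' rule to pass from $\F_n$ to $\F_{n+1}=\sigma(\F_n,X_{n+1})$ via the partition $\{\theta>n+1\}$, $\{\theta\leq n+1,\mu=i\}$, $i\in\Mh$. The two key $\F_n$-conditional probabilities are obtained directly from \eqref{eq:finite-dimensional-distributions}: on $\{\theta>n\}$, $\F_n$ depends on $\theta$ only through the indicator of that event, so the tail of $\theta$ is still geometric, yielding $\P\{\theta>n+1\mid\F_n\}=(1-p)\Pi_n^{(0)}$; and splitting $\{\theta\leq n+1,\mu=i\}$ at $\theta=n+1$, together with the independence of $\mu$ from $\theta$ and from $X_1,\ldots,X_n$ on $\{\theta=n+1\}$ (on which those observations are i.i.d.\ $\P_0$), gives $\P\{\theta\leq n+1,\mu=i\mid\F_n\}=\Pi_n^{(i)}+p\nu_i\Pi_n^{(0)}$. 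On each piece of the partition $X_{n+1}$ has $\F_n$-conditional density $f_0$ or $f_i$ with respect to $m$, so the $\F_n$-conditional density of $X_{n+1}$ is precisely $D(\bPi_n,\cdot)=\sum_{j=0}^M D_j(\bPi_n,\cdot)$, and Bayes' rule gives \eqref{E:Pi-Dynamics}. The initial conditions are read off from the prior.

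Because $\bPi_{n+1}$ is a deterministic function of $(\bPi_n,X_{n+1})$ and the $\F_n$-conditional law of $X_{n+1}$ depends on $\F_n$ only through $\bPi_n$, $\bPi$ is Markov, and for bounded $f:S^M\to\R$,
\[
\E[f(\bPi_{n+1})\mid\F_n]=\int_E f\!\left(\frac{D_0(\bPi_n,x)}{D(\bPi_n,x)},\ldots,\frac{D_M(\bPi_n,x)}{D(\bPi_n,x)}\right)D(\bPi_n,x)\,m(dx)=(\T f)(\bPi_n),
\]
proving the operator identity. Parts (a) and (b) then follow by applying this to coordinate projections: $\E[\Pi_{n+1}^{(0)}\mid\F_n]=\int_E D_0(\bPi_n,x)\,m(dx)=(1-p)\Pi_n^{(0)}$ gives the supermartingale property and, by iteration from $\E\Pi_0^{(0)}=1-p_0\leq 1$, the bound $\E\Pi_n^{(0)}\leq(1-p)^n$; and for $i\in\Mh$, $\E[\Pi_{n+1}^{(i)}\mid\F_n]=\Pi_n^{(i)}+p\nu_i\Pi_n^{(0)}\geq\Pi_n^{(i)}$ yields the submartingale property.

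The main obstacle is the Bayes bookkeeping in (c)---specifically, justifying cleanly that on $\{\theta>n\}$ the excess $\theta-n$ has the same geometric law given $\F_n$ as it does unconditionally, and that $\mu$ still has law $\nu$ given $\F_n$ on $\{\theta=n+1\}$. Both facts are transparent from \eqref{eq:finite-dimensional-distributions} but must be argued rather than asserted; once the $\F_n$-conditional density $D(\bPi_n,\cdot)$ of $X_{n+1}$ is pinned down, everything downstream is routine integration against the kernel $D$.
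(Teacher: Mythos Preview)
Your proposal is correct and arrives at the same intermediate identities as the paper, but the organization and the level at which you work differ enough to be worth noting. The paper proves (a) and (b) first and (c) afterward, and it grounds every step in the explicit joint density $\alpha_n(x_1,\ldots,x_n)=\sum_{i=0}^M\alpha_n^{(i)}(x_1,\ldots,x_n)$ of $X_1,\ldots,X_n$: it shows $\Pi_n^{(i)}=\alpha_n^{(i)}/\alpha_n$ by direct integration, computes the full table $\P\{\theta=k,\mu=i\mid\F_n\}$ in terms of these densities, and then reads off both the martingale inequalities and, via the one-step recursion of the $\alpha_n^{(i)}$, the update \eqref{E:Pi-Dynamics} and the $\F_n$-conditional density $D(\bPi_n,\cdot)$ of $X_{n+1}$. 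Your route inverts the order---(c) first, then (a) and (b) by applying $\T$ to the coordinate maps---which is cleaner and makes the Markov structure do the work. The price is precisely the ``obstacle'' you flag: the memorylessness of $\theta-n$ and the preservation of the law of $\mu$ given $\F_n$ on $\{\theta=n+1\}$ are exactly what the paper's explicit $\alpha_n^{(i)}$ bookkeeping makes rigorous without further argument. If you want to stay at your level of abstraction, the cleanest fix is to compute $\P\{\theta=k,\mu=i\mid\F_n\}$ directly from \eqref{eq:finite-dimensional-distributions} as the paper does; once those are in hand, your derivation of (c) and the corollary (a)--(b) via $\T$ on projections goes through verbatim.
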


\begin{remark}\label{R:PiProperties}
  Since $1=\sum_{i=0}^{M}\Pi_n^{(i)}$, the vector
  $(\Pi_n^{(0)},\ldots,\Pi_n^{(M)})\in S^M$ for every $n\geq 0$.
  Since $\bPi$ is uniformly bounded, the limit
  $\lim_{n\rightarrow\infty}\bPi_n$ exists by the martingale
  convergence theorem.  Moreover,
  $\lim_{n\rightarrow\infty}\Pi_n^{(0)}=0$ a.s.\ by Proposition
  \ref{P:PiProperties}(a) since $p\in(0,1)$.
\end{remark}

Now, let the functions $h, h_1,\ldots,h_M$ from $S^M$ into $\R_+$ be
defined by
\begin{align*}
  h(\bpi)\triangleq \min_{j\in \Mh} h_j(\bpi) \quad \text{and} \quad
  h_j(\bpi) \triangleq \sum_{i=0}^{M} \pi_i\, a_{ij},\quad j\in \Mh,
\end{align*}
respectively.  Then, we note that for every $\delta=(\tau,d)\in
\Delta$, we have
\begin{align*}
  R(\tau, d) &= \E\left[ \sum_{n=0}^{\tau-1}c(1-\Pi_n^{(0)})
    +\II{\tau<\infty}\sum_{j=1}^{M}\II{d=j}h_j(\Pi_{\tau})\right]\\
  &\geq \E\left[ \sum_{n=0}^{\tau-1}c(1-\Pi_n^{(0)})
    +\II{\tau<\infty}h(\Pi_{\tau})\right] = R(\tau,\tilde{d})
\end{align*}
where we define on the event $\{\tau<\infty\}$ the terminal decision
rule $\tilde{d}$ to be any index satisfying
$h_{\tilde{d}}(\Pi_{\tau})=h(\Pi_{\tau})$.  In other words, an optimal
terminal decision depends only upon the value of the $\bPi$ process at
the stage in which we stop.  Note also that the functions $h$ and
$h_1,\ldots,h_M$ are bounded on $S^M$.  Therefore, we have the
following:

\begin{lemma}\label{L:OSP1}
  The minimum Bayes risk (\ref{E:UDef1}) reduces to the following
  optimal stopping of the Markov process $\bPi$:
  \begin{align*}
    R^* = \inf_{(\tau,d)\in\Delta}R(\tau,d) =
    \inf_{(\tau,\tilde{d})\in\Delta}R(\tau,\tilde{d}) =
    \inf_{\tau\in\Fb}
    \,\E\left[\sum_{n=0}^{\tau-1}c\,(1-\Pi_n^{(0)})+\II{\tau<\infty}h(\Pi_\tau)\right].
  \end{align*}
\end{lemma}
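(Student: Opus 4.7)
The plan is to glue together the computation already displayed in the paragraph preceding the lemma statement. The essential ingredients are Proposition~\ref{P:BayesRiskInTermsOfPi}, the pointwise inequality $h_d(\bpi)\geq h(\bpi)$, and an $\F_\tau$-measurable selection of the pointwise minimizer of $j\mapsto h_j(\Pi_\tau)$; from these the three equalities of the lemma will drop out.

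First, I would invoke Proposition~\ref{P:BayesRiskInTermsOfPi} to write, for any $\delta=(\tau,d)\in\Delta$,
\begin{align*}
R(\tau,d) = \E\left[\sum_{n=0}^{\tau-1} c(1-\Pi_n^{(0)}) + \II{\tau<\infty}\sum_{j=1}^{M} \II{d=j}\, h_j(\Pi_\tau)\right],
\end{align*}
and then observe that on $\{\tau<\infty\}$ the inner sum equals $h_d(\Pi_\tau)\geq h(\Pi_\tau)$ pointwise, so $R(\tau,d)\geq R(\tau,\tilde d)$ for any selector $\tilde d$ that attains $h(\Pi_\tau)$ pointwise.

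Next, I would exhibit such a selector within $\Delta$. Because $\Mh$ is finite, the canonical choice $\tilde d \triangleq \min\{j\in\Mh : h_j(\Pi_\tau)=h(\Pi_\tau)\}$ on $\{\tau<\infty\}$ (extended arbitrarily on $\{\tau=\infty\}$) works: each $h_j$ is affine in $\bpi$, hence $h_j(\Pi_\tau)$ is $\F_\tau$-measurable; the level sets $\{h_j(\Pi_\tau)=h(\Pi_\tau)\}$ therefore lie in $\F_\tau$, and finiteness of $\Mh$ makes $\{\tilde d=j\}\in\F_\tau$ for every $j\in\Mh$. Thus $(\tau,\tilde d)\in\Delta$.

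Finally, I would assemble the three equalities. The pointwise inequality of the first step together with $(\tau,\tilde d)\in\Delta$ yields $\inf_{(\tau,d)\in\Delta} R(\tau,d)=\inf_{(\tau,\tilde d)\in\Delta} R(\tau,\tilde d)$; by construction,
\begin{align*}
R(\tau,\tilde d) = \E\left[\sum_{n=0}^{\tau-1} c(1-\Pi_n^{(0)}) + \II{\tau<\infty}\, h(\Pi_\tau)\right]
\end{align*}
depends on $\delta$ only through $\tau$, so the infimum collapses to one over $\Fb$-stopping times. I do not anticipate any substantive obstacle; the only point requiring care is the $\F_\tau$-measurability of $\tilde d$, which is trivialized by the finiteness of $\Mh$.
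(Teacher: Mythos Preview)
Your proposal is correct and follows exactly the argument the paper gives in the paragraph immediately preceding the lemma: rewrite $R(\tau,d)$ via Proposition~\ref{P:BayesRiskInTermsOfPi}, use the pointwise inequality $h_d(\Pi_\tau)\ge h(\Pi_\tau)$, and replace $d$ by an $\F_\tau$-measurable selector $\tilde d$ achieving the minimum. If anything, your explicit verification that $\tilde d$ is $\F_\tau$-measurable (via the finite-index lexicographic rule) is slightly more careful than the paper, which simply asserts the existence of such a $\tilde d$.
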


We simplify this formulation further by showing that it is enough to
take the infimum over
\begin{align}
  C \triangleq \{\tau\in\Fb\,|\,\tau<\infty \text{ a.s. and }
  \E Y_\tau^-<\infty\},\label{E:C}
\end{align}
where 
\begin{align}
  -Y_n \triangleq \sum_{k=0}^{n-1}c\,(1-\Pi_k^{(0)})+h(\Pi_n),\quad
  n\geq 0\label{E:Yn}
\end{align}
is the minimum cost obtained by making the best terminal decision when
alarm is set at time $n$. Since $h(\cdot)$ is bounded on $S^M$, the
process $\{Y_n, \F_n; n\ge 0\}$ consists of integrable random
variables. So the expectation $\E Y_\tau$ exists for every
$\tau\in\Fb$, and our problem becomes
\begin{align}
  -R^*=\sup_{\tau\in\Fb}\E
  Y_\tau.\label{E:OptimizationProblemTau}
\end{align}

Observe that $\E \tau <\infty$ for every $\tau\in C$ because $\infty >
(1/c)\E Y_\tau^- \geq \E (\tau-\theta)^+ \geq \E (\tau-\theta) \geq \E
\tau -\E \theta \ge \E\tau - (1/p)$.  In fact, we have $\E
Y_\tau>-\infty \Leftrightarrow \E Y_\tau^-<\infty \Leftrightarrow \E
\tau<\infty$ for every $\tau\in\Fb$.  Since
$\sup_{\tau\in\Fb}\E Y_{\tau} \geq \E Y_0 > -h(\Pi_0) >
-\infty$, it is enough to consider $\tau\in\Fb$ such that
$\E\tau <\infty$. Namely, (\ref{E:OptimizationProblemTau}) reduces to
\begin{align}
  -R^*=\sup_{\tau\in C} \E Y_\tau.\label{E:OptimizationProblemC}
\end{align}

\section{Solution via optimal stopping theory}
\label{sec:dynamic-programming-solution}

In this section we derive an optimal solution for the sequential
change diagnosis problem in~\eqref{E:UDef1} by building on the
formulation of~\eqref{E:OptimizationProblemC} via the tools of
optimal stopping theory.

\subsection{The optimality equation}\label{sec:Derive-Opt-Eqn}

We begin by applying the method of truncation with a view of passing
to the limit to arrive at the final result.  For every $N\ge 0$
and $n=0,\ldots,N$, define the sub-collections 
\begin{align*}
  C_n &\triangleq \{\tau \vee n\,|\,\tau\in C\}\quad\text{and}\quad
  C_n^N \triangleq \{\tau \wedge N\,|\,\tau\in C_n\}
\end{align*}
of stopping times in $C$ of \eqref{E:C}.  Note that $C=C_0$. Now,
consider the families of (truncated) optimal stopping
problems 
corresponding to $(C_n)_{n\geq 0}$ and $(C_n^N)_{0\leq n\leq N}$,
respectively, defined by
\begin{align}
  \label{E:Vn-and-VnN}
  -V_n \triangleq \sup_{\tau\in C_n}\E Y_\tau,\; n\ge 0
  \quad\text{and}\quad -V_n^N \triangleq \sup_{\tau\in C_n^N}\E
  Y_\tau,\; 0\le n \le N,\; N\ge 0.
\end{align}
Note that $R^*=V_0$.

To investigate these optimal stopping problems, we introduce
versions of the \emph{Snell envelope} of $(Y_n)_{n\geq 0}$ (i.e.,
the smallest regular supermartingale dominating $(Y_n)_{n\geq
  0}$) corresponding to $(C_n)_{n\geq 0}$ and $(C_n^N)_{0\leq n\leq
  N}$, respectively, defined by
\begin{align}
\label{eq:snell-envelopes}
  \gamma_n &\triangleq \esup_{\tau\in C_n} \E [Y_\tau\,|\,\F_n],\;
  n\ge 0
  \quad\text{and}\quad \gamma_n^N \triangleq \esup_{\substack{\tau\in
      C_n^N}} \E [Y_\tau\,|\,\F_n],\; 0 \le n\le N, \; N\ge 0.
\end{align}
Then through the following series of lemmas, whose proofs are deferred
to the Appendix, we point out several useful properties of these Snell
envelopes. Finally, we extend these results to an arbitrary initial
state vector and establish the optimality equation.  Note that each of
the ensuing (in)equalities between random variables are in the
$\P$-almost sure sense.

First, these Snell envelopes provide the following alternative
expressions for the optimal stopping problems introduced in
\eqref{E:Vn-and-VnN} above.

\begin{lemma}\label{L:Vn-equal-expected-gamma}
  For every $N\ge 0$ and $0\le n\le N$, we have $-V_n = \E \gamma_n$
  and $-V_n^N = \E \gamma_n^N$.
\end{lemma}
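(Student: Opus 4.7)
The plan is to establish both identities by the classical essential-supremum argument: the family $\{\E[Y_\tau\mid\F_n] : \tau \in C_n\}$ (and its truncated counterpart) is directed upward under pasting, hence its essential supremum is attained as an a.s.\ monotone limit along a sequence from the family, after which expectation and limit can be exchanged. The easy half is immediate: for any $\tau \in C_n$ the tower property and the definition of essential supremum give $\E Y_\tau = \E\E[Y_\tau\mid\F_n] \le \E \gamma_n$, so $-V_n \le \E \gamma_n$, and the same reasoning yields $-V_n^N \le \E \gamma_n^N$.

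For the reverse inequality the main step is upward directedness. Given $\tau_1,\tau_2 \in C_n$, set $A = \{\E[Y_{\tau_1}\mid\F_n] \ge \E[Y_{\tau_2}\mid\F_n]\} \in \F_n$ and define $\tau \triangleq \tau_1 \I_A + \tau_2 \I_{A^c}$. Then $\tau$ is an $\Fb$-stopping time satisfying $\tau\ge n$, and $Y_\tau^- \le Y_{\tau_1}^- + Y_{\tau_2}^-$ is integrable, so $\tau\in C_n$; if in addition $\tau_1,\tau_2\le N$ then $\tau\in C_n^N$. Because $A \in \F_n$,
\begin{equation*}
  \E[Y_\tau \mid \F_n] = \E[Y_{\tau_1}\mid\F_n] \vee \E[Y_{\tau_2}\mid\F_n].
\end{equation*}
The standard essential-supremum lemma for upward-directed families then supplies a sequence $(\tau_k) \subseteq C_n$ with $\E[Y_{\tau_k}\mid\F_n] \uparrow \gamma_n$ a.s., and analogously inside $C_n^N$ for $\gamma_n^N$.

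It remains to exchange limit and expectation, which I would treat in two cases. In the truncated setting, for $\tau \in C_n^N$ one has $|Y_\tau| \le cN + \|h\|_\infty$ uniformly, since $\Pi_k^{(0)}\in[0,1]$ and $h$ is bounded on $S^M$; bounded convergence yields $\E \gamma_n^N = \lim_k \E Y_{\tau_k} \le -V_n^N$. In the untruncated setting $-Y_n\ge 0$ by \eqref{E:Yn}, so $Y_\tau\le 0$ and hence $\gamma_n \le 0$; moreover $\E[Y_{\tau_1}\mid\F_n]$ is integrable because $\tau_1\in C$, so monotone convergence applied to the nonnegative increments $\E[Y_{\tau_k}\mid\F_n] - \E[Y_{\tau_1}\mid\F_n]$ gives $\E \gamma_n = \lim_k \E Y_{\tau_k} \le -V_n$. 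Combined with the easy direction, both identities follow. The only mild obstacle is verifying the pasted stopping time stays in the relevant family, which reduces to checking $A \in \F_n$ and the crude bound on $Y_\tau^-$; both are immediate from the definitions of $C_n$ and $C_n^N$.
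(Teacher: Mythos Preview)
Your proof is correct and follows essentially the same approach as the paper: both establish upward directedness of $\{\E[Y_\tau\mid\F_n]:\tau\in C_n\}$ via the pasting construction, extract a monotone sequence converging to the essential supremum, and then exchange limit and expectation. The only cosmetic differences are that the paper arranges the monotone sequence to start at $Y_n$ (so integrability of the lower bound is immediate), whereas you justify integrability of $\E[Y_{\tau_1}\mid\F_n]$ directly from $\tau_1\in C$; and you invoke bounded convergence explicitly in the truncated case, while the paper simply notes the argument is similar.
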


Second, we have the following backward-induction equations.

\begin{lemma}\label{L:backward-induction-eqns}
  We have $\gamma_n = \max\{Y_n, \E [\gamma_{n+1}\,|\,\F_n]\}$ for
  every $n\ge 0$. For every $N\ge 1$ and $0\le n \le N-1$, we have
  $\gamma_N^N = Y_N$ and $\gamma_n^N = \max\{Y_n, \E
  [\gamma_{n+1}^N\,|\,\F_n]\}$.
\end{lemma}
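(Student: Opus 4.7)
The plan is a standard Snell-envelope backward-induction, adapted to the specific collections $C_n$ and $C_n^N$. I would begin with a structural observation: unpacking the definitions, $C_n^N$ coincides with the collection of all $\Fb$-stopping times $\sigma$ satisfying $n\le \sigma \le N$ a.s., and $C_n$ coincides with the collection of $\Fb$-stopping times satisfying $\sigma\ge n$, $\sigma<\infty$ a.s., and $\E Y_\sigma^-<\infty$; in particular $C_{n+1}^N\subseteq C_n^N$ and $C_{n+1}\subseteq C_n$. I would also record the usual lattice property: for $\sigma_1,\sigma_2\in C_{n+1}^N$, the event $A=\{\E[Y_{\sigma_1}|\F_{n+1}]\ge \E[Y_{\sigma_2}|\F_{n+1}]\}\in\F_{n+1}$, and $\sigma=\sigma_1\II{A}+\sigma_2\II{A^c}$ again lies in $C_{n+1}^N$ with $\E[Y_\sigma|\F_{n+1}]$ equal to the pointwise maximum. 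This will let me push the esssup through the inner $\F_{n+1}$-conditional expectation via monotone convergence along a directed sequence.

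For the truncated case I would run backward induction from $n=N$. The base is immediate: $C_N^N=\{N\}$, hence $\gamma_N^N=\E[Y_N|\F_N]=Y_N$. For $0\le n\le N-1$ I would prove both inequalities. For ``$\ge$'': the constant stopping time $\sigma\equiv n$ lies in $C_n^N$, so $\gamma_n^N\ge Y_n$; and for any $\sigma\in C_{n+1}^N\subseteq C_n^N$ the tower property gives
\[
\E[Y_\sigma|\F_n]=\E\big[\E[Y_\sigma|\F_{n+1}]\,\big|\,\F_n\big]\le \E[\gamma_{n+1}^N|\F_n],
\]
and taking the esssup over $\sigma\in C_{n+1}^N$ (passing it through the inner conditional expectation via the directedness above) yields $\gamma_n^N\ge \E[\gamma_{n+1}^N|\F_n]$.

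For ``$\le$'': given any $\tau\in C_n^N$, I would set $A=\{\tau=n\}\in\F_n$ and $\tau'=\tau\vee(n+1)$. Since $n\le\tau\le N$ and $n+1\le N$, one checks $\tau'\in C_{n+1}^N$; moreover $Y_\tau=Y_n\II{A}+Y_{\tau'}\II{A^c}$, so
\[
\E[Y_\tau|\F_n]=Y_n\II{A}+\II{A^c}\E[Y_{\tau'}|\F_n]\le \max\{Y_n,\E[\gamma_{n+1}^N|\F_n]\}\quad\text{a.s.}
\]
Taking the esssup over $\tau\in C_n^N$ closes the induction.

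The non-truncated identity $\gamma_n=\max\{Y_n,\E[\gamma_{n+1}|\F_n]\}$ follows from the verbatim same two-sided argument with $C_n$ in place of $C_n^N$. The only non-routine point is to verify that the replacement $\tau'=\tau\vee(n+1)$ still lies in $C_{n+1}$ whenever $\tau\in C_n$, which reduces to checking $\E Y_{\tau'}^-<\infty$. This is immediate from $Y_{\tau'}^-\le Y_\tau^-+Y_{n+1}^-$ together with the deterministic bound $Y_k^-\le ck+\sup_{\bpi\in S^M}h(\bpi)$ coming directly from the definition of $Y_k$ in \eqref{E:Yn}. This integrability bookkeeping is really the only mild obstacle; modulo it, the argument is the textbook Snell-envelope backward induction.
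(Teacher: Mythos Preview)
Your proposal is correct and follows essentially the same route as the paper: both directions use the split on $\{\tau=n\}$ with $\tau'=\tau\vee(n+1)$ for the upper bound, and the lattice/directedness of $\{\E[Y_\sigma\mid\F_{n+1}]\}$ together with monotone convergence for the lower bound (the paper packages the latter as a citation to Chow--Robbins--Siegmund, whereas you prove it inline). Two small remarks: the displayed inequality $\E[Y_\sigma\mid\F_n]\le\E[\gamma_{n+1}^N\mid\F_n]$ in your ``$\ge$'' step points the wrong way for the conclusion you draw---what you actually need (and what your parenthetical on directedness supplies) is $\gamma_n^N\ge\E[Y_\sigma\mid\F_n]$ for $\sigma\in C_{n+1}^N\subseteq C_n^N$ and then pass the esssup inside; and your explicit check that $\tau\vee(n+1)\in C_{n+1}$ in the untruncated case is a point the paper glosses over, so keep it.
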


We also have that these versions of the Snell envelopes coincide in
the limit as $N\rightarrow\infty$.  That is,

\begin{lemma}\label{L:gamma-equals-gamma-prime}
  For every $n\geq 0$, we have $\gamma_n = \lim_{N\rightarrow\infty}
  \gamma_n^N$.
\end{lemma}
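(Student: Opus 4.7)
The plan is to prove the equality by sandwiching $\gamma_n$ between $\lim_N \gamma_n^N$ and itself, using the natural truncation $\tau_N \triangleq \tau \wedge N$ to pass between the two families $C_n$ and $C_n^N$.

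First I would record two easy monotonicity facts. For $N \ge n$, any $\sigma = \tau \wedge N$ with $\tau \in C_n$ is a bounded stopping time with $\sigma \ge n$ (since $\tau \ge n$), so $\sigma \in C_n$, giving $C_n^N \subseteq C_n$ and therefore $\gamma_n^N \le \gamma_n$. Similarly, $\tau \wedge N = (\tau \wedge (N+1)) \wedge N$ and $\tau \wedge (N+1) \in C_n$, so $C_n^N \subseteq C_n^{N+1}$ and thus $\gamma_n^N$ is nondecreasing in $N$. These two observations give the existence of $L \triangleq \lim_{N\to\infty} \gamma_n^N$ and the easy inequality $L \le \gamma_n$ a.s.

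For the reverse inequality, fix an arbitrary $\tau \in C_n$ and set $\tau_N = \tau \wedge N$; then $\tau_N \in C_n^N$ for $N \ge n$, so $\E[Y_{\tau_N}\mid \F_n] \le \gamma_n^N \le L$ a.s. Because $\tau < \infty$ a.s., $\tau_N \to \tau$ (and in fact $\tau_N = \tau$ eventually on $\{\tau < \infty\}$), hence $Y_{\tau_N} \to Y_\tau$ a.s. To interchange limit and conditional expectation, I would use the bound
\begin{equation*}
  |Y_{\tau_N}| \;\le\; \sum_{k=0}^{\tau-1} c\,(1-\Pi_k^{(0)}) + h(\Pi_{\tau_N}) \;\le\; c\,\tau + \|h\|_\infty,
\end{equation*}
where $h$ is bounded on the compact simplex $S^M$, and $\E\tau < \infty$ follows from the inclusion $\tau \in C_n \subseteq C$ together with the inequality $\E\tau \le (1/c)\E Y_\tau^- + 1/p$ derived in the excerpt. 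Conditional dominated convergence then yields $\E[Y_{\tau_N}\mid\F_n] \to \E[Y_\tau\mid \F_n]$ a.s., so $\E[Y_\tau\mid \F_n] \le L$ a.s. Taking the essential supremum over $\tau \in C_n$ gives $\gamma_n \le L$, completing the argument.

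The main obstacle is the dominated convergence step: one must identify a single integrable dominating random variable that works uniformly in $N$. The candidate $c\tau + \|h\|_\infty$ works precisely because $h$ is bounded (as noted before Lemma~\ref{L:OSP1}) and because membership in $C$ forces $\E\tau < \infty$; once this integrability is in hand, the rest is bookkeeping with monotonicity of the families $C_n^N$.
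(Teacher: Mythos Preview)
Your proof is correct and takes a more direct route than the paper's. The paper first shows that the limit $\gamma'_n \triangleq \lim_N \gamma_n^N$ satisfies the same backward-induction relation $\gamma'_n = \max\{Y_n, \E[\gamma'_{n+1}\mid\F_n]\}$ (by passing to the limit in Lemma~\ref{L:backward-induction-eqns} via monotone convergence), concludes that $(\gamma'_n)_{n\ge 0}$ is an $\Fb$-supermartingale dominating $(Y_n)_{n\ge 0}$, and then runs a telescoping optional-sampling argument on $\int_F \gamma'_n\,d\P$ down to $\int_{F\cap\{n\le\tau\le m\}} Y_\tau\,d\P$ plus a remainder $\int_{F\cap\{\tau>m\}}(\gamma'_m)^-\,d\P$ that it shows vanishes as $m\to\infty$ using $\E\tau<\infty$. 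Your approach bypasses the supermartingale structure entirely: you truncate $\tau$ directly to $\tau\wedge N\in C_n^N$ and pass to the limit via conditional dominated convergence with the single integrable dominant $c\tau+\|h\|$. The gain is brevity and a more elementary argument; the paper's route, on the other hand, yields as a byproduct that the limit inherits the recursive structure of Lemma~\ref{L:backward-induction-eqns}, though this fact is not used elsewhere.
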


Next, recall from \eqref{E:T-operator} and Proposition
\ref{P:PiProperties}(c) the operator $\T$ and let us introduce the
operator $\M$ on the collection of bounded functions $f:S^M
\mapsto \R_+$ defined by
\begin{align}
\label{eq:operator-M}
  (\M f)(\bpi) \triangleq
  \min\{h(\bpi),c(1-\pi_0)+(\T f)(\bpi)\},\quad\bpi\in S^M.
\end{align}
Observe that $0\leq \M f \leq h$.  That is, $\bpi\mapsto(\M f)(\bpi)$
is a nonnegative bounded function. Therefore, $\M^2 f\equiv \M(\M f)$
is well-defined.  If $f$ is nonnegative and bounded, then $\M^n
f\equiv \M(\M^{n-1} f)$ is defined for every $n\ge 1$, with
$\M^0 f\equiv f$ by definition.  Using operator $\M$, we can express
$(\gamma_n^N)_{0\leq n\leq N}$ in terms of the process $\bPi$ as
stated in the following lemma.

\begin{lemma}\label{L:pg-36}
  For every $N\ge 0$, and $0\le n \le N$, we have
  \begin{align}
    \label{eq:markov-property-of-snell-envelope}
    \gamma_n^N = -c\sum_{k=0}^{n-1}(1-\Pi_k^{(0)})-(\M^{N-n}h)(\Pi_n).
  \end{align}
\end{lemma}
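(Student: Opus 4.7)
The natural approach is backward induction on $n$, stepping down from $N$ to $0$, using the backward‐induction identity for $(\gamma_n^N)$ supplied by Lemma~\ref{L:backward-induction-eqns} together with the Markov property of $\bPi$ from Proposition~\ref{P:PiProperties}(c).

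For the base case $n=N$, I would just unfold the definitions: Lemma~\ref{L:backward-induction-eqns} gives $\gamma_N^N=Y_N$, and by~\eqref{E:Yn} this equals $-c\sum_{k=0}^{N-1}(1-\Pi_k^{(0)})-h(\Pi_N)$, which matches the claim since $\M^0 h\equiv h$.

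For the inductive step, assume
\[
  \gamma_{n+1}^N=-c\sum_{k=0}^{n}(1-\Pi_k^{(0)})-(\M^{N-n-1}h)(\Pi_{n+1}).
\]
The first sum on the right is $\F_n$-measurable, so conditioning on $\F_n$ only affects the last term. Here the key ingredient is Proposition~\ref{P:PiProperties}(c): since $\bPi$ is Markov and $\M^{N-n-1}h$ is a bounded function on $S^M$, we have $\E[(\M^{N-n-1}h)(\Pi_{n+1})\mid\F_n]=(\T\M^{N-n-1}h)(\Pi_n)$. Combining this with the $\F_n$-measurable piece yields
\[
  \E[\gamma_{n+1}^N\mid\F_n]
  =-c\sum_{k=0}^{n-1}(1-\Pi_k^{(0)})-\bigl[c(1-\Pi_n^{(0)})+(\T\M^{N-n-1}h)(\Pi_n)\bigr].
\]
Writing $Y_n$ similarly as $-c\sum_{k=0}^{n-1}(1-\Pi_k^{(0)})-h(\Pi_n)$ and plugging both into $\gamma_n^N=\max\{Y_n,\E[\gamma_{n+1}^N\mid\F_n]\}$, I can factor out the common $-c\sum_{k=0}^{n-1}(1-\Pi_k^{(0)})$ and turn the outer $\max$ into a $-\min$. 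By the definition of $\M$ in \eqref{eq:operator-M}, that inner minimum is exactly $(\M\M^{N-n-1}h)(\Pi_n)=(\M^{N-n}h)(\Pi_n)$, which closes the induction.

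There is no serious obstacle: the argument is essentially algebraic once one invokes the Markov property to replace the conditional expectation by a deterministic function of $\Pi_n$. The only thing to watch is the bookkeeping of signs and of the summation index when splitting off the $k=n$ term from $\sum_{k=0}^{n}(1-\Pi_k^{(0)})$, and the recognition that the boundedness of $h$ (hence of every iterate $\M^j h$, since $0\le\M f\le h$) is what makes the operator $\T$ applicable inside the conditional expectation. So the proof is really a two-line induction dressed up by this bookkeeping.
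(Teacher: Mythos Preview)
Your proposal is correct and follows essentially the same backward induction as the paper's proof: base case $n=N$ via $\gamma_N^N=Y_N$ and $\M^0h=h$, then the inductive step by applying Lemma~\ref{L:backward-induction-eqns}, using the Markov property (Proposition~\ref{P:PiProperties}(c)) to compute the conditional expectation, and recognizing the resulting minimum as an application of $\M$. Your remark that boundedness of $\M^{N-n-1}h$ (which follows from $0\le \M f\le h$) is what licenses the use of $\T$ is a nice point the paper leaves implicit.
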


The next lemma shows how the optimal stopping problems can be
rewritten in terms of the operator $\M$. It also conveys the
connection between the truncated optimal stopping problems and the
initial state $\bPi_0$ of the $\bPi$ process.

\begin{lemma}\label{L:pg-38}
  We have
  \begin{itemize}
  \item[(a)] $V_0^N=(\M^N h)(\bPi_0)$ for every $N\geq 0$, and
  \item[(b)] $V_0={\displaystyle\lim_{N\rightarrow\infty}(\M^N
      h)(\bPi_0)}$.
  \end{itemize}
\end{lemma}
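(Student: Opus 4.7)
The plan is to obtain part (a) by specializing the Markov representation in Lemma \ref{L:pg-36} to $n=0$, and then to derive part (b) by taking $N\to\infty$ via a monotone convergence argument.

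For part (a), I would start from Lemma \ref{L:Vn-equal-expected-gamma} evaluated at $n=0$, which gives $-V_0^N = \E\,\gamma_0^N$. Specializing the representation in Lemma \ref{L:pg-36} to $n=0$ collapses the empty sum $\sum_{k=0}^{-1}c(1-\Pi_k^{(0)})$ to zero, so that $\gamma_0^N = -(\M^N h)(\bPi_0)$. Since by Proposition \ref{P:PiProperties}(c) the initial state $\bPi_0 = (1-p_0,p_0\nu_1,\ldots,p_0\nu_M)$ is deterministic, $(\M^N h)(\bPi_0)$ is a constant and taking expectations yields $V_0^N = (\M^N h)(\bPi_0)$, as claimed.

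For part (b), I would combine Lemma \ref{L:Vn-equal-expected-gamma} with Lemma \ref{L:gamma-equals-gamma-prime} to write
\[
  -V_0 \;=\; \E\,\gamma_0 \;=\; \E\!\left[\lim_{N\to\infty}\gamma_0^N\right],
\]
and then justify the interchange of limit and expectation by monotone convergence. First I would verify that $C_0^N \subseteq C_0^{N+1}$: any $\sigma = \tau\wedge N \in C_0^N$ is bounded, finite, satisfies $\E Y_\sigma^-<\infty$, and equals $\sigma\wedge(N+1)$, so it lies in $C_0^{N+1}$ as well. This makes $N\mapsto \gamma_0^N$ nondecreasing. Second, since the constant stopping time $0$ belongs to every $C_0^N$, we have $\gamma_0^N \ge Y_0 = -h(\bPi_0)$, a finite and hence integrable lower bound. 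Applying monotone convergence to the nonnegative nondecreasing sequence $\gamma_0^N - Y_0$ then gives $\E\,\gamma_0^N \to \E\,\gamma_0$, i.e.\ $V_0^N \to V_0$. Combined with part (a), this yields $V_0 = \lim_{N\to\infty}(\M^N h)(\bPi_0)$.

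The argument is essentially mechanical given the preceding lemmas; the only genuine step is the interchange of limit and expectation in part (b), and even this is routine once the monotonicity $C_0^N\subseteq C_0^{N+1}$ and the integrable lower bound $Y_0$ are identified. I do not anticipate any substantive obstacle.
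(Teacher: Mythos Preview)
Your proposal is correct and follows essentially the same route as the paper: part~(a) comes from specializing Lemma~\ref{L:pg-36} to $n=0$ together with Lemma~\ref{L:Vn-equal-expected-gamma} and the fact that $\gamma_0^N$ is $\F_0$-measurable (hence constant), and part~(b) from Lemma~\ref{L:gamma-equals-gamma-prime} plus an interchange of limit and expectation. The only cosmetic difference is that you invoke monotone convergence (via $C_0^N\subseteq C_0^{N+1}$ and the lower bound $Y_0$) while the paper invokes dominated convergence; at $n=0$ both are in fact overkill, since $\gamma_0^N$ and $\gamma_0$ are already deterministic constants and the interchange is trivial.
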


Observe that since $\bPi_0\in\F_0=\{\varnothing,\Omega\}$, we have
$\P\{\bPi_0=\bpi\}=1$ for some $\bpi\in S^M$.  On the other hand, for
every $\bpi\in S^M$ we can construct a probability space
$(\Omega,\F,\P_{\bpi})$ hosting a Markov process $\bPi$ with the same
dynamics as in \eqref{E:Pi-Dynamics} and $\P_{\bpi}\{\bPi_0=\bpi\}=1$.
Moreover, on such a probability space, the preceding results remain
valid.  So, let us denote by $\E_{\bpi}$ the expectation with respect
to $\P_{\bpi}$ and rewrite \eqref{E:Vn-and-VnN} as
\begin{align*}
  -V_n(\bpi) \triangleq \sup_{\tau\in C_n}\E_{\bpi} Y_\tau,\; n\ge 0,
  \quad \text{and} \quad -V_n^N(\bpi) \triangleq \sup_{\tau\in
    C_n^N}\E_{\bpi} Y_\tau,\; 0\le n\le N,\; N\ge 0
\end{align*}
for every $\bpi\in S^M$.  Then Lemma \ref{L:pg-38} implies that
\begin{align}
\label{eq:value-functions}
  V_0^N(\bpi)=(\M^N h)(\bpi)\text{ for every }N\geq 0, \quad\text{ and
  }\quad V_0(\bpi)=\lim_{N\rightarrow\infty}(\M^N h)(\bpi)
\end{align}
for every $\bpi\in S^M$.  Taking limits as $N\rightarrow\infty$ of
both sides in $(\M^{N+1}h)(\bpi) = \M(\M^N h)(\bpi)$ and applying
the monotone convergence theorem on the right-hand side yields
$V_0(\bpi) = (\M V_0)(\bpi)$.  Hence, we have shown the following
result.

\begin{proposition}[Optimality equation]\label{P:Dyn-prog-eqn}
  For every $\bpi\in S^M$, we have
  \begin{align}
    V_0(\bpi) = (\M V_0)(\bpi) \equiv
    \min\{h(\bpi),c(1-\pi_0)+(\T V_0)(\bpi)\}.\label{E:Dyn-prog-eqn}
  \end{align}
\end{proposition}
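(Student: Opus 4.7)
The plan is to leverage Lemma~\ref{L:pg-38}(b), which identifies $V_0(\bpi)$ as the pointwise limit of the iterates $(\M^N h)(\bpi)$, together with the semigroup identity $\M^{N+1}h = \M(\M^N h)$, and then to pass the limit inside the operator $\M$. Since $\M$ is defined in \eqref{eq:operator-M} by $(\M f)(\bpi) = \min\{h(\bpi),\, c(1-\pi_0) + (\T f)(\bpi)\}$, the fixed-point identity $V_0 = \M V_0$ will follow once I show that $(\T(\M^N h))(\bpi)$ converges to $(\T V_0)(\bpi)$ as $N \to \infty$.

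First I would observe that $(\M^N h)_{N \ge 0}$ is a non-increasing sequence of nonnegative functions bounded above by $h$. Indeed, $\M f \le h$ for every bounded $f \ge 0$ by the very definition of $\M$, so $\M h \le h = \M^0 h$; and the operator $\M$ is monotone, since $\T$ in \eqref{E:T-operator} preserves pointwise inequalities and $\min\{h(\bpi),\,\cdot\}$ is non-decreasing in its second argument. Iterating gives $0 \le \M^{N+1} h \le \M^N h \le h$ for every $N \ge 0$, and since $h$ is bounded above by $\max_{i,j} a_{ij} < \infty$, the sequence is uniformly bounded and decreases pointwise to $V_0$ on $S^M$.

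Next I would justify the interchange of limit and integration in $\T$. For each fixed $\bpi \in S^M$, a direct computation using the normalisations of the densities $f_i$ shows that $x \mapsto D(\bpi,x)$ integrates to $1$ against $m$, so $D(\bpi,x)\,m(dx)$ is a probability measure on $(E,\Em)$; moreover, on $\{D(\bpi,\cdot) > 0\}$ the vector $(D_0(\bpi,x)/D(\bpi,x),\ldots,D_M(\bpi,x)/D(\bpi,x))$ lies in $S^M$. Consequently the integrand in $(\T(\M^N h))(\bpi)$ is uniformly bounded by $\sup h < \infty$ and converges pointwise in $x$ to the corresponding integrand for $(\T V_0)(\bpi)$, so dominated (equivalently, monotone) convergence gives $(\T(\M^N h))(\bpi) \to (\T V_0)(\bpi)$.

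Finally, I would take $N \to \infty$ in the semigroup identity
\[
  (\M^{N+1} h)(\bpi) = \min\bigl\{h(\bpi),\, c(1-\pi_0) + (\T(\M^N h))(\bpi)\bigr\},
\]
combining Lemma~\ref{L:pg-38}(b) on the left with continuity of $\min\{h(\bpi),\,\cdot\}$ and the convergence above on the right to conclude that $V_0(\bpi) = \min\{h(\bpi),\, c(1-\pi_0) + (\T V_0)(\bpi)\} = (\M V_0)(\bpi)$. The only non-trivial ingredient in this outline is the monotonicity/boundedness bookkeeping that legitimises passing the limit through $\T$; beyond that I do not expect any obstacle.
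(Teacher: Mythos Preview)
Your argument is correct and follows essentially the same route as the paper: take limits in the identity $(\M^{N+1}h)(\bpi) = \M(\M^N h)(\bpi)$, invoke Lemma~\ref{L:pg-38}(b) on the left, and pass the limit through $\T$ on the right via monotone convergence. The paper states this in one line (``applying the monotone convergence theorem on the right-hand side''); your version simply supplies the monotonicity and boundedness bookkeeping that the paper leaves implicit.
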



\begin{remark}
  By solving $V_0(\bpi)$ for any initial state $\bpi\in S^M$, we
  capture the solution to the original problem since property (c) of
  Proposition \ref{P:PiProperties} and \eqref{E:OptimizationProblemC}
  imply that
  \begin{align*}
    R^* = V_0(1-p_0,p_0\nu_1,\ldots,p_0\nu_M).
  \end{align*}
\end{remark}

\subsection{Some properties of the value function}\label{sec:V-properties}

Now, we reveal some important properties of the value function
$V_0(\cdot)$ of (\ref{eq:value-functions}).  These results help us
to establish an optimal solution for $V_0(\cdot)$, and hence an
optimal solution for $R^*$, in the next subsection.

\begin{lemma}\label{L:V-concave}
    If $g:S^M \mapsto \R$ is a bounded concave function, then so is $\T g$.
\end{lemma}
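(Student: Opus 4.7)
The plan is to exploit the structure of $\T$ as an integral of a perspective-type function composed with a linear map in $\bpi$. Boundedness of $\T g$ is immediate: since the mappings $\pi\mapsto D_i(\bpi,x)$ are linear in $\bpi$ and a direct computation using $\int f_i\,dm=1$ and $\nu_1+\cdots+\nu_M=1$ shows that $\int_E D(\bpi,x)\,m(dx)=1$, so $D(\bpi,\cdot)$ is a probability density with respect to $m$. Hence if $|g|\le K$, then $|(\T g)(\bpi)|\le K$ for every $\bpi\in S^M$.

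For concavity, I would fix $x\in E$ and prove that the integrand
\[
G_x(\bpi)\triangleq D(\bpi,x)\,g\!\left(\frac{D_0(\bpi,x)}{D(\bpi,x)},\ldots,\frac{D_M(\bpi,x)}{D(\bpi,x)}\right)
\]
is concave in $\bpi\in S^M$ (with the natural convention that $G_x(\bpi)=0$ whenever $D(\bpi,x)=0$). Fix $\bpi^{(1)},\bpi^{(2)}\in S^M$ and $\lambda\in[0,1]$, and let $\bpi=\lambda\bpi^{(1)}+(1-\lambda)\bpi^{(2)}$. Writing $D_i^{(j)}\triangleq D_i(\bpi^{(j)},x)$ and $D^{(j)}\triangleq D(\bpi^{(j)},x)$, linearity of $\bpi\mapsto D_i(\bpi,x)$ gives
\[
D_i(\bpi,x)=\lambda D_i^{(1)}+(1-\lambda)D_i^{(2)},\qquad D(\bpi,x)=\lambda D^{(1)}+(1-\lambda)D^{(2)}.
\]
Assuming $D(\bpi,x)>0$, set $\alpha=\lambda D^{(1)}/D(\bpi,x)$ and $\beta=(1-\lambda)D^{(2)}/D(\bpi,x)$, so $\alpha+\beta=1$, and observe that
\[
\frac{D_i(\bpi,x)}{D(\bpi,x)}=\alpha\,\frac{D_i^{(1)}}{D^{(1)}}+\beta\,\frac{D_i^{(2)}}{D^{(2)}}
\]
is a convex combination of two elements of $S^M$. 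Applying the concavity of $g$ and then multiplying through by $D(\bpi,x)$ yields exactly $G_x(\bpi)\ge \lambda G_x(\bpi^{(1)})+(1-\lambda)G_x(\bpi^{(2)})$. The degenerate cases in which $D^{(1)}=0$ or $D^{(2)}=0$ (in which case the corresponding $\pi_0^{(j)}f_0(x)$ and $(\pi_i^{(j)}+\pi_0^{(j)}p\nu_i)f_i(x)$ all vanish) are handled by the same inequality after noting that $G_x$ vanishes there.

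Finally, boundedness of the integrand ($|G_x(\bpi)|\le K\,D(\bpi,x)$ with $\int D(\bpi,x)\,m(dx)=1$) allows me to integrate the pointwise concavity inequality against $m(dx)$ and invoke Fubini/linearity of integration to conclude that
\[
(\T g)(\lambda\bpi^{(1)}+(1-\lambda)\bpi^{(2)})\ge\lambda(\T g)(\bpi^{(1)})+(1-\lambda)(\T g)(\bpi^{(2)}).
\]
The only subtle point is verifying that the integrand is well defined and the inequality extends to the boundary set $\{x:D(\bpi,x)=0\}$; this is the step that requires a little care, but it is not a serious obstacle since everything is continuous and nonnegative, so the perspective convention $0\cdot g(\cdot)=0$ is consistent and the pointwise concavity inequality holds trivially on that set.
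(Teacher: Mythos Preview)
Your argument is correct and follows essentially the same route as the paper: both exploit the linearity of $\bpi\mapsto D_i(\bpi,x)$ to rewrite the integrand at the convex combination $\lambda\bpi^{(1)}+(1-\lambda)\bpi^{(2)}$ as a weighted average (with weights $\alpha=\lambda D^{(1)}/D(\bpi,x)$ and $\beta=1-\alpha$) and then apply the concavity of $g$. Your version is slightly more explicit about boundedness and the degenerate cases where $D^{(j)}=0$, but the core computation is identical to the paper's.
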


\begin{proposition}\label{P:V-concave}
  The mappings $\bpi \mapsto V_0^N(\bpi), N\geq 0$ and $\bpi \mapsto
  V_0(\bpi)$ are concave.
\end{proposition}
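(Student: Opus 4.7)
The plan is to prove concavity by induction on $N$ using the recursive structure $\M^{N+1}h = \M(\M^N h)$, then pass to the pointwise limit. The starting point is that the terminal cost $h$ is itself concave on $S^M$: since each $h_j(\bpi)=\sum_{i=0}^M \pi_i a_{ij}$ is linear (hence concave) in $\bpi$, and $h=\min_{j\in\Mh} h_j$ is the pointwise minimum of finitely many concave functions, $h$ is concave (and bounded, since $0\le h\le \max_{i,j} a_{ij}$).

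Next, I would verify that the operator $\M$ preserves boundedness and concavity on $S^M$. If $f:S^M\to\R_+$ is bounded and concave, then by Lemma \ref{L:V-concave}, $\T f$ is bounded and concave. Since $\bpi\mapsto c(1-\pi_0)$ is affine in $\bpi$, the sum $\bpi\mapsto c(1-\pi_0)+(\T f)(\bpi)$ is concave. Taking the pointwise minimum with the concave function $h$ yields
\[
  (\M f)(\bpi) = \min\bigl\{h(\bpi),\,c(1-\pi_0)+(\T f)(\bpi)\bigr\},
\]
which is again concave, and is bounded between $0$ and $h$ (so that subsequent iterates $\M^{k}f$ remain well-defined nonnegative bounded functions). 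An immediate induction on $N$ then shows that $\M^N h$ is concave (and bounded) for every $N\ge 0$, so by \eqref{eq:value-functions} the map $\bpi\mapsto V_0^N(\bpi) = (\M^N h)(\bpi)$ is concave for all $N\ge 0$.

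Finally, since $V_0(\bpi)=\lim_{N\to\infty}(\M^N h)(\bpi)$ pointwise on $S^M$ by \eqref{eq:value-functions}, and pointwise limits of concave functions are concave (for any $\bpi,\bpi'\in S^M$ and $\lambda\in[0,1]$, passing to the limit in $V_0^N(\lambda\bpi+(1-\lambda)\bpi')\ge \lambda V_0^N(\bpi)+(1-\lambda)V_0^N(\bpi')$ preserves the inequality), $V_0$ is concave on $S^M$.

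The only step requiring care is the concavity-preservation property of $\M$, but this reduces cleanly to three observations: Lemma \ref{L:V-concave} handles $\T$; affine functions are concave; and minima of concave functions are concave. Everything else is bookkeeping via induction and a routine passage to the limit, so I do not expect any substantive obstacle.
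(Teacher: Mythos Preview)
Your proposal is correct and follows essentially the same approach as the paper: show $h$ is concave as a minimum of linear functions, use Lemma~\ref{L:V-concave} together with the fact that minima of concave functions are concave to conclude that $\M$ preserves concavity, induct to get concavity of $V_0^N=\M^N h$ via \eqref{eq:value-functions}, and pass to the pointwise limit for $V_0$. The paper's proof is slightly terser but the logic is identical.
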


\begin{proposition}\label{P:V-convergence-rate}
    For every $N\ge 1$ and $\bpi\in S^M$, we have
        \begin{align*}
            V_0(\bpi)\leq V_0^N(\bpi) \leq
            V_0(\bpi)+\left(\frac{\|h\|^2}{c}+\frac{\|h\|}{p}\right)\frac{1}{N}.
        \end{align*}
        Since $\|h\|\triangleq \sup_{\bpi\in S^M} |h(\bpi)|<\infty$,
        $\lim_{N\rightarrow\infty} \downarrow V_0^N(\bpi) = V_0(\bpi)$
        uniformly in $\bpi\in S^M$.
\end{proposition}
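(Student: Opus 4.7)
The plan is to prove each inequality separately. The lower bound $V_0(\bpi)\le V_0^N(\bpi)$ is immediate: every bounded stopping time has finite expectation and hence lies in $C$, so $C_0^N\subseteq C_0=C$; maximizing $\E_{\bpi} Y_\tau$ over the smaller family can only decrease the supremum, giving $-V_0^N(\bpi)\le -V_0(\bpi)$.

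For the upper bound I would use an $\epsilon$-optimal stopping time for $V_0$ and truncate it at $N$, controlling the loss via the tail probability. Fix $\bpi\in S^M$ and $\epsilon>0$, and pick $\tau_\epsilon\in C$ with $\E_{\bpi} Y_{\tau_\epsilon}>-V_0(\bpi)-\epsilon$. Because $-Y_n\ge 0$, we have
\[
-\E_{\bpi}Y_{\tau_\epsilon}=\E_{\bpi}\!\sum_{k=0}^{\tau_\epsilon-1} c(1-\Pi_k^{(0)})+\E_{\bpi} h(\Pi_{\tau_\epsilon})\ge c\,\E_{\bpi}(\tau_\epsilon-\theta)^+,
\]
mirroring the identity used just before \eqref{E:OptimizationProblemC}. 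Combining this with $\E\theta=(1-p_0)/p\le 1/p$ yields
\[
\E_{\bpi}\tau_\epsilon\le \E_{\bpi}\theta+\E_{\bpi}(\tau_\epsilon-\theta)^+ \le \tfrac{1}{p}+\tfrac{V_0(\bpi)+\epsilon}{c}\le \tfrac{1}{p}+\tfrac{\|h\|+\epsilon}{c}.
\]

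Next, set $\sigma_\epsilon\triangleq \tau_\epsilon\wedge N\in C_0^N$. From the definition \eqref{E:Yn} of $Y_n$,
\[
\E_{\bpi}Y_{\tau_\epsilon}-\E_{\bpi}Y_{\sigma_\epsilon}=\E_{\bpi}\II{\tau_\epsilon>N}\!\left[h(\Pi_N)-h(\Pi_{\tau_\epsilon})-\sum_{k=N}^{\tau_\epsilon-1} c(1-\Pi_k^{(0)})\right]\le \|h\|\,\P_{\bpi}\{\tau_\epsilon>N\},
\]
since the bracketed expression is bounded above by $h(\Pi_N)\le \|h\|$. Markov's inequality gives $\P_{\bpi}\{\tau_\epsilon>N\}\le \E_{\bpi}\tau_\epsilon/N$, so
\[
-V_0^N(\bpi)\ge \E_{\bpi}Y_{\sigma_\epsilon}\ge \E_{\bpi}Y_{\tau_\epsilon}-\frac{\|h\|}{N}\!\left(\tfrac{1}{p}+\tfrac{\|h\|+\epsilon}{c}\right)\ge -V_0(\bpi)-\epsilon-\frac{\|h\|}{N}\!\left(\tfrac{1}{p}+\tfrac{\|h\|+\epsilon}{c}\right).
\]
Letting $\epsilon\downarrow 0$ gives the advertised bound, and the uniform convergence assertion is automatic because the right-hand side is independent of $\bpi$ and $\|h\|<\infty$ (as $h_j$ is a convex combination of the constants $a_{ij}$).

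The only real subtlety is that an optimal stopping time for $V_0(\bpi)$ need not exist, so I cannot directly truncate such a $\tau^*$; the $\epsilon$-approximation argument handles this cleanly, and once the $\E_{\bpi}\tau_\epsilon$ bound is in hand the rest is just arithmetic. A minor care is needed in writing the Bayes identity $-\E_{\bpi}Y_\tau=c\E_{\bpi}(\tau-\theta)^++\E_{\bpi}h(\Pi_\tau)$, which follows from Proposition~\ref{P:BayesRiskInTermsOfPi} applied to $\tilde d$.
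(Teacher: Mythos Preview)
Your proof is correct and follows the same truncation-of-an-$\varepsilon$-optimal-stopping-time argument as the paper, with the sole difference being how $\E_{\bpi}\tau_\varepsilon$ is bounded: you invoke the identity $\E_{\bpi}\sum_{k<\tau}(1-\Pi_k^{(0)})=\E_{\bpi}(\tau-\theta)^+$ together with $\E_{\bpi}\theta\le 1/p$, whereas the paper writes the sum as $\tau_\varepsilon-\sum_{k<\tau_\varepsilon}\Pi_k^{(0)}$ and bounds $\sum_{k\ge 0}\E_{\bpi}\Pi_k^{(0)}\le\sum_{k\ge 0}(1-p)^k=1/p$ directly via Proposition~\ref{P:PiProperties}(a). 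The paper's route is marginally cleaner because the measure $\P_{\bpi}$ is constructed only to host the Markov chain $\bPi$, so appealing to $\theta$ under $\P_{\bpi}$ for general $\bpi\in S^M$ requires an extra (though routine) construction of a disorder time with the right conditional law.
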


\begin{proposition}\label{P:V0N-continuous}
  For every $N\ge 0$, the function $V_0^N:S^M\mapsto\R_+$ is
  continuous.
\end{proposition}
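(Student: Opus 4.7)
My plan is to prove the statement by induction on $N$, using the identity $V_0^N = \M^N h$ from Lemma~\ref{L:pg-38}(a) and the inductive step $V_0^N = \M V_0^{N-1}$ that follows from the recursion $\M^N h = \M(\M^{N-1}h)$.

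For the base case $N=0$, we have $V_0^0 = h = \min_{j\in\Mh} h_j$ where each $h_j(\bpi) = \sum_{i=0}^M \pi_i a_{ij}$ is linear, hence continuous, on $S^M$; a finite minimum of continuous functions is continuous, so $V_0^0$ is continuous on the simplex. For the inductive step, assume $V_0^{N-1}$ is continuous (and note it is bounded by $\|h\|$, since $0\le \M f\le h$ for any nonnegative $f$, so $\T$ is well-defined on it). Since
\[
V_0^N(\bpi) = (\M V_0^{N-1})(\bpi) = \min\bigl\{h(\bpi),\, c(1-\pi_0) + (\T V_0^{N-1})(\bpi)\bigr\},
\]
and since $\bpi\mapsto c(1-\pi_0)$ and $\bpi\mapsto h(\bpi)$ are already known continuous, it suffices to prove the following key lemma: \emph{if $f:S^M\to\R$ is bounded and continuous, then $\T f$ is continuous on $S^M$.}

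This is the main obstacle. To prove it, fix a convergent sequence $\bpi^{(n)}\to\bpi$ in $S^M$ and set
\[
g_n(x) \triangleq D(\bpi^{(n)},x)\, f\!\left(\tfrac{D_0(\bpi^{(n)},x)}{D(\bpi^{(n)},x)},\ldots,\tfrac{D_M(\bpi^{(n)},x)}{D(\bpi^{(n)},x)}\right),
\]
with the convention $g_n(x)=0$ when $D(\bpi^{(n)},x)=0$. Because each $D_i(\bpi,x)$ is linear (hence continuous) in $\bpi$ for fixed $x$, and $D=\sum_i D_i$, we have pointwise-in-$x$ convergence $D_i(\bpi^{(n)},x)\to D_i(\bpi,x)$ and $D(\bpi^{(n)},x)\to D(\bpi,x)$. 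On the set $\{x:D(\bpi,x)>0\}$, the arguments of $f$ converge to the corresponding limits, and continuity of $f$ together with convergence of the prefactor gives $g_n(x)\to g(x)$. On $\{x:D(\bpi,x)=0\}$, the bound $|g_n(x)|\le \|f\|\,D(\bpi^{(n)},x)\to 0$ gives $g_n(x)\to 0 = g(x)$. So $g_n\to g$ pointwise $m$-a.e.

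For the dominating function, observe that since the components of $\bpi^{(n)}$ lie in $[0,1]$,
\[
D(\bpi^{(n)},x) \le (1-p)f_0(x) + \sum_{i=1}^M (1+p\nu_i)\,f_i(x) \triangleq \Phi(x),
\]
and $\Phi\in L^1(m)$ because each $f_i$ is a probability density with respect to $m$. Hence $|g_n(x)|\le \|f\|\,\Phi(x)$, and the dominated convergence theorem gives $(\T f)(\bpi^{(n)}) = \int g_n\, dm \to \int g\, dm = (\T f)(\bpi)$. This proves continuity of $\T f$, and closes the induction: $V_0^N$ is continuous on $S^M$ for every $N\ge 0$.
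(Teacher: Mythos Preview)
Your proof is correct and follows essentially the same approach as the paper: induction on $N$ with base case $V_0^0=h$, and the inductive step reduced to showing that $\T$ preserves continuity via the dominated convergence theorem, using the linearity of $D_i(\cdot,x)$ for pointwise convergence and an $m$-integrable majorant built from the densities $f_0,\ldots,f_M$. Your treatment is in fact slightly more careful than the paper's in that you explicitly handle the set $\{x:D(\bpi,x)=0\}$ and give a tighter dominating function $\|f\|\,\Phi$, but these are cosmetic differences rather than a genuinely different route.
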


\begin{corollary}\label{C:V-continuous}
  The function $V_0:S^M \mapsto \R_+$ is continuous.
\end{corollary}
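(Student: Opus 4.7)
The plan is to obtain the continuity of $V_0$ as a consequence of the two results that immediately precede the corollary: Proposition~\ref{P:V0N-continuous}, which states that each truncated value function $V_0^N$ is continuous on $S^M$, and Proposition~\ref{P:V-convergence-rate}, which furnishes the explicit bound
\[
0 \le V_0^N(\bpi) - V_0(\bpi) \le \Bigl(\tfrac{\|h\|^2}{c} + \tfrac{\|h\|}{p}\Bigr)\tfrac{1}{N}
\]
for every $\bpi \in S^M$. The right-hand side is independent of $\bpi$, so $V_0^N \to V_0$ uniformly on $S^M$.

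From here I would simply invoke the classical fact that a uniform limit of continuous functions on a metric space is continuous. Concretely, fix $\bpi \in S^M$ and $\varepsilon > 0$. Choose $N$ large enough that $\|V_0^N - V_0\|_\infty < \varepsilon/3$ (using the bound above). By Proposition~\ref{P:V0N-continuous}, $V_0^N$ is continuous at $\bpi$, so there is a neighborhood $U$ of $\bpi$ in $S^M$ on which $|V_0^N(\bpi') - V_0^N(\bpi)| < \varepsilon/3$. For any $\bpi' \in U$, the triangle inequality
\[
|V_0(\bpi') - V_0(\bpi)| \le |V_0(\bpi') - V_0^N(\bpi')| + |V_0^N(\bpi') - V_0^N(\bpi)| + |V_0^N(\bpi) - V_0(\bpi)| < \varepsilon
\]
yields continuity of $V_0$ at $\bpi$. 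Since $\bpi \in S^M$ was arbitrary, $V_0$ is continuous on all of $S^M$.

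There is essentially no obstacle: both ingredients have already been proven as separately stated results, and the argument is the standard uniform-limit-preserves-continuity fact. The only thing worth remarking on is that the convergence rate in Proposition~\ref{P:V-convergence-rate} is genuinely uniform (the constant $\|h\|^2/c + \|h\|/p$ does not depend on $\bpi$), which is precisely what the argument requires; mere pointwise monotone convergence would not suffice for continuity.
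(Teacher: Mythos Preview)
Your proof is correct and follows the same approach as the paper: the paper's proof simply notes that $V_0$ is the uniform limit (by Proposition~\ref{P:V-convergence-rate}) of the continuous functions $V_0^N$ (by Proposition~\ref{P:V0N-continuous}) and concludes continuity. Your version is merely a more explicit unpacking of the uniform-limit argument via the standard $\varepsilon/3$ inequality.
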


Note that $S^M$ is a compact subset of $\R^{M+1}$, so while continuity
of $V_0(\cdot)$ on the interior of $S^M$ follows from the concavity of
$V_0(\cdot)$ by Proposition \ref{L:V-concave}, Corollary
\ref{C:V-continuous} establishes continuity on all of $S^M$, including
its boundary.

\subsection{An optimal sequential decision
  strategy}\label{sec:optimal-soln}

Finally, we describe the optimal stopping region in $S^M$ implied
by the value function $V_0(\cdot)$, and we present an optimal
sequential decision strategy for our problem. Let us define for
every $N\ge 0$,
\begin{align*}
  \Gamma_N &\triangleq \{\bpi\in S^M\,|\, V_0^N(\bpi)=h(\bpi)\}, &
  \Gamma_N^{(j)} &\triangleq \Gamma_N \cap \{\bpi\in S^M\,|\,
  h(\bpi)=h_j(\bpi)\}, \; j\in \Mh, \\
  \Gamma &\triangleq \{\bpi\in S^M\,|\, V_0(\bpi)=h(\bpi)\}, &
  \Gamma^{(j)} &\triangleq \Gamma \cap \{\bpi\in S^M\,|\,
  h(\bpi)=h_j(\bpi)\}, \; j\in \Mh.
\end{align*}

Theorem \ref{T:sigma-properties} below shows that it is always optimal
to stop and raise an alarm as soon as the posterior probability
process $\Pi$ enters the region $\Gamma$. Intuitively, this follows
from the optimality equation (\ref{E:Dyn-prog-eqn}). At any stage, we
always have two choices: either we stop immediately and raise an alarm
or we wait for at least one more stage and take an additional
observation.  If the posterior probability of all possibilities is
given by the vector $\pi$, then the costs of those competing actions
equal $h(\pi)$ and $c(1-\pi_0)+(\T V_0)(\pi)$, respectively, and it is
always better to take the action that has the smaller expected cost.
The cost of stopping is less (and therefore stopping is optimal) if
$h(\pi) \le c(1-\pi_0)+(\T V_0)(\pi)$, equivalently, if $V_0(\pi) =
h(\pi)$.  Likewise, if at most $N$ stages are left, then stopping is
optimal if $V^N_0(\pi)=h(\pi)$ or $\pi \in \Gamma_N$.

For each $j\in \{0\}\cup \Mh$, let $\e_j\in S^M$ denote the unit
vector consisting of zero in every component except for the $j$th
component, which is equal to one. Note that
$\e_0,\ldots,\e_M$ are the extreme points of the closed
convex set $S^M$, and any vector $\bpi=(\pi_0,\ldots,\pi_M)\in S^M$
can be expressed in terms of $\e_0,\ldots,\e_M$ as $\bpi =
\sum_{j=0}^{M}\pi_j\e_j$.

\begin{theorem}\label{T:Gamma-decreasing-subsets}
  For every $j\in \Mh$, $(\Gamma_N^{(j)})_{N\geq 0}$ is a decreasing
  sequence of non-empty, closed, convex subsets of $S^M$.  Moreover,
  \begin{gather*}
    \Gamma_0^{(j)} \supseteq \Gamma_1^{(j)} \supseteq \cdots \supseteq
    \Gamma^{(j)} \supseteq \left\{\bpi\in S^M
      \,|\,h_j(\bpi)\leq\min\{h(\bpi),c(1-\pi_0)\}\right\} \ni
    \e_j,\\
    \Gamma = \bigcap_{N=1}^{\infty}\Gamma_N =
    \bigcup_{j=1}^{M}\Gamma^{(j)},\quad\text{and}\quad
    \Gamma^{(j)}=\bigcap_{N=1}^{\infty}\Gamma_N^{(j)},\quad
    j\in \Mh.
  \end{gather*}
  Furthermore, $S^M = \Gamma_0 \supseteq \Gamma_1 \supseteq \cdots
  \supseteq \Gamma \supsetneqq \{\e_1,\ldots,\e_M\}$.
\end{theorem}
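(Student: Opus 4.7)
The plan is to pivot from the definitions of $\Gamma_N^{(j)}$ and $\Gamma^{(j)}$ as level sets of the value functions to equivalent descriptions extracted from the optimality equations. Since $V_0^N\leq h$ and $V_0\leq h$ pointwise, and since $V_0^N=\min\{h,\,c(1-\pi_0)+\T V_0^{N-1}\}$ for $N\geq 1$ (from $V_0^N=\M V_0^{N-1}$, cf.\ Lemma~\ref{L:pg-38}) and $V_0=\min\{h,\,c(1-\pi_0)+\T V_0\}$ (Proposition~\ref{P:Dyn-prog-eqn}), the identity $V_0^N=h$ is equivalent to $h\leq c(1-\pi_0)+\T V_0^{N-1}$, and likewise for $V_0$. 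Intersecting with $\{h=h_j\}=\bigcap_k\{h_j\leq h_k\}$ replaces $h$ on the left-hand side by $h_j$, so for every $N\geq 1$ and $j\in\Mh$ I would work with the representation
\[
\Gamma_N^{(j)}=\bigcap_{k\in\Mh}\{\bpi\in S^M:h_j(\bpi)\leq h_k(\bpi)\}\,\cap\,\{\bpi\in S^M:h_j(\bpi)-c(1-\pi_0)-(\T V_0^{N-1})(\bpi)\leq 0\},
\]
and the analogous formula for $\Gamma^{(j)}$ (with $\Gamma_0=S^M$ because $V_0^0=h$).

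From these representations every geometric property follows mechanically. Each $h_j-h_k$ is affine, so $\{h_j\leq h_k\}$ is a closed half-space. Lemma~\ref{L:V-concave} together with Proposition~\ref{P:V-concave} gives that $\T V_0^{N-1}$ and $\T V_0$ are concave, so $h_j-c(1-\pi_0)-\T V_0^{N-1}$ and $h_j-c(1-\pi_0)-\T V_0$ are convex on $S^M$, and their nonpositive sublevel sets are closed (by continuity of $V_0^N$ and $V_0$, Proposition~\ref{P:V0N-continuous} and Corollary~\ref{C:V-continuous}) and convex; intersections preserve both properties. Evaluating at $\bpi=\e_j$ one finds $h_j(\e_j)=a_{jj}=0$ while $c(1-\pi_0)=c>0$ and $\T V_0^{N-1}(\e_j),\T V_0(\e_j)\geq 0$, proving $\e_j\in\Gamma_N^{(j)}\cap\Gamma^{(j)}$. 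More generally any $\bpi$ with $h_j(\bpi)\leq\min\{h(\bpi),c(1-\pi_0)\}$ forces $h_j=h$ (since always $h\leq h_j$) and $h_j\leq c(1-\pi_0)+\T V_0$ trivially, giving the stated inclusion. The monotonicity $V_0\leq\cdots\leq V_0^{N+1}\leq V_0^N\leq h$ from Proposition~\ref{P:V-convergence-rate} yields $\Gamma_{N+1}^{(j)}\subseteq\Gamma_N^{(j)}$ and $\Gamma^{(j)}\subseteq\Gamma_N^{(j)}$; the uniform limit $V_0^N\downarrow V_0$ upgrades this to $\Gamma=\bigcap_N\Gamma_N$, and intersecting with the fixed set $\{h=h_j\}$ gives $\Gamma^{(j)}=\bigcap_N\Gamma_N^{(j)}$. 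The decomposition $\Gamma=\bigcup_j\Gamma^{(j)}$ is immediate from $h=\min_j h_j$.

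The most delicate item, and the one I expect to require a genuine idea rather than bookkeeping, is the strict inclusion $\Gamma\supsetneqq\{\e_1,\ldots,\e_M\}$, since the convex-set machinery alone does not rule out $\Gamma^{(j)}=\{\e_j\}$ when several of the $a_{jk}$ happen to vanish. The key observation is that we are not required to enlarge any particular $\Gamma^{(j)}$; it suffices to exhibit a single non-vertex point of $\Gamma$. Fixing any $j\in\Mh$ and choosing $\epsilon\in(0,1)$ small enough that $\epsilon a_{0j}\leq c(1-\epsilon)$, set $\bpi_\epsilon\triangleq(1-\epsilon)\e_j+\epsilon\e_0$, so $\pi_{\epsilon,0}=\epsilon$ and $\pi_{\epsilon,j}=1-\epsilon$ are both strictly positive, placing $\bpi_\epsilon$ outside $\{\e_1,\ldots,\e_M\}$. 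Then
\[
h(\bpi_\epsilon)\leq h_j(\bpi_\epsilon)=\epsilon a_{0j}\leq c(1-\epsilon)=c(1-\pi_{\epsilon,0})\leq c(1-\pi_{\epsilon,0})+(\T V_0)(\bpi_\epsilon),
\]
so $V_0(\bpi_\epsilon)=h(\bpi_\epsilon)$ and $\bpi_\epsilon\in\Gamma$, completing the strict inclusion. Once this perturbation is in place, the remaining assertions of the theorem fall out by systematically combining the optimality equation with the concavity and continuity facts gathered in Section~\ref{sec:V-properties}.
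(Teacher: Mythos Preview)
Your proposal is correct and follows essentially the same route as the paper: the same continuity and concavity inputs, the same monotonicity argument via $V_0^N\searrow V_0$, and the identical perturbation $\bpi_\epsilon=(1-\epsilon)\e_j+\epsilon\e_0$ with $\epsilon a_{0j}\le c(1-\epsilon)$ for the strict inclusion $\Gamma\supsetneqq\{\e_1,\ldots,\e_M\}$. The only cosmetic difference is in the convexity step: you rewrite $\Gamma_N^{(j)}$ (via $V_0^N=\M V_0^{N-1}$) as an intersection of half-spaces with the convex sublevel set $\{h_j-c(1-\pi_0)-\T V_0^{N-1}\le 0\}$, whereas the paper verifies convexity directly by sandwiching $V_0^N(\lambda\bpi+(1-\lambda)\bpi')$ between its concave lower bound and the linear $h_j$; both arguments rest on Proposition~\ref{P:V-concave} and Lemma~\ref{L:V-concave}.
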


\begin{lemma}\label{L:gamma-n-V}
  For every $n\geq 0$, we have $\gamma_n =
  -c\sum_{k=0}^{n-1}(1-\Pi_k^{(0)})-V_0(\Pi_n).$
\end{lemma}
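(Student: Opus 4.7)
The plan is to derive the identity by taking the $N\to\infty$ limit of the explicit expression for the truncated Snell envelope $\gamma_n^N$ from Lemma~\ref{L:pg-36}. For any fixed $n\ge 0$ and $N\ge n$, that lemma gives
\begin{align*}
  \gamma_n^N = -c\sum_{k=0}^{n-1}(1-\Pi_k^{(0)})-(\M^{N-n}h)(\Pi_n),
\end{align*}
while Lemma~\ref{L:gamma-equals-gamma-prime} tells us that $\gamma_n = \lim_{N\to\infty}\gamma_n^N$ almost surely. Thus it suffices to show that $(\M^{N-n}h)(\Pi_n) \to V_0(\Pi_n)$ a.s.\ as $N\to\infty$; the first (finite) sum on the right is unaffected by $N$.

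First I would invoke \eqref{eq:value-functions}, which rewrites $V_0^{N-n}(\bpi) = (\M^{N-n}h)(\bpi)$, so that the $N\to\infty$ behavior of $(\M^{N-n}h)(\cdot)$ is exactly the behavior of $V_0^{N-n}(\cdot)$ that Proposition~\ref{P:V-convergence-rate} controls. The key input is the uniform bound
\begin{align*}
  0 \;\le\; V_0^{N-n}(\bpi) - V_0(\bpi) \;\le\; \left(\frac{\|h\|^2}{c}+\frac{\|h\|}{p}\right)\frac{1}{N-n}
\end{align*}
valid for every $\bpi\in S^M$. Evaluating this bound at the random point $\bpi = \Pi_n(\omega)\in S^M$ (which is in $S^M$ by Remark~\ref{R:PiProperties}) gives the same deterministic bound on $|(\M^{N-n}h)(\Pi_n)-V_0(\Pi_n)|$, independent of $\omega$. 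Hence $(\M^{N-n}h)(\Pi_n) \to V_0(\Pi_n)$ as $N\to\infty$ uniformly in $\omega$, and in particular almost surely.

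Combining, I pass to the limit $N\to\infty$ in the displayed expression for $\gamma_n^N$ to obtain the claimed identity
\begin{align*}
  \gamma_n = -c\sum_{k=0}^{n-1}(1-\Pi_k^{(0)})-V_0(\Pi_n) \quad\text{a.s.}
\end{align*}
There is no real obstacle here: the entire argument is ``plug in Lemma~\ref{L:pg-36}, use Lemma~\ref{L:gamma-equals-gamma-prime}, and exchange the limit with evaluation at $\Pi_n$.'' The only subtlety is that $\Pi_n$ is random, but the uniform convergence supplied by Proposition~\ref{P:V-convergence-rate} handles this cleanly without invoking any dominated-convergence machinery.
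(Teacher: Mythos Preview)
Your proof is correct and follows essentially the same route as the paper: invoke Lemma~\ref{L:pg-36} for $\gamma_n^N$, use Lemma~\ref{L:gamma-equals-gamma-prime} to pass to the limit, and identify $\lim_{N\to\infty}(\M^{N-n}h)(\Pi_n)=V_0(\Pi_n)$. The only difference is that you appeal to the uniform bound of Proposition~\ref{P:V-convergence-rate}, whereas the paper simply cites Lemma~\ref{L:pg-38}(b) (equivalently \eqref{eq:value-functions}), whose pointwise convergence $V_0(\bpi)=\lim_{N}(\M^N h)(\bpi)$ for each $\bpi\in S^M$ already suffices once you evaluate at $\bpi=\Pi_n(\omega)$ for fixed $\omega$; the uniformity is harmless but not needed.
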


\begin{theorem}\label{T:sigma-properties}
  Let $\sigma \triangleq \inf\{n\geq 0 \,|\, \bPi_n \in\Gamma\}$.
  \begin{itemize}
  \item[(a)] The stopped process $\{\gamma_{n \wedge\sigma}, \F_n;
    n\geq 0\}$ is a martingale.
  \item[(b)] The random variable $\sigma$ is an optimal stopping time
    for $V_0$, and
  \item[(c)] $\E\,\sigma<\infty$.
  \end{itemize}
\end{theorem}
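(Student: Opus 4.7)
The plan is to prove the three parts in the order (a), (c), (b), since each subsequent part relies on the previous ones. The optimality equation $V_0 = \min\{h,\; c(1-\pi_0) + \T V_0\}$ from Proposition~\ref{P:Dyn-prog-eqn} together with the expression $\gamma_n = -c\sum_{k=0}^{n-1}(1-\Pi_k^{(0)}) - V_0(\bPi_n)$ from Lemma~\ref{L:gamma-n-V} are the two workhorse identities.

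For part (a), I would show directly that $\E[\gamma_{(n+1)\wedge\sigma} - \gamma_{n\wedge\sigma} \mid \F_n] = 0$. Writing $\gamma_{(n+1)\wedge\sigma} - \gamma_{n\wedge\sigma} = (\gamma_{n+1} - \gamma_n)\II{\sigma > n}$ and noting that $\{\sigma > n\} = \{\bPi_0 \notin \Gamma, \ldots, \bPi_n \notin \Gamma\} \in \F_n$, it suffices to show that $\E[\gamma_{n+1} \mid \F_n] = \gamma_n$ on $\{\sigma > n\}$. On that event $V_0(\bPi_n) < h(\bPi_n)$, so by Proposition~\ref{P:Dyn-prog-eqn} we have $V_0(\bPi_n) = c(1-\Pi_n^{(0)}) + (\T V_0)(\bPi_n)$, and by the Markov property (Proposition~\ref{P:PiProperties}(c)) applied to the bounded function $V_0$, $(\T V_0)(\bPi_n) = \E[V_0(\bPi_{n+1}) \mid \F_n]$. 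Substituting these into Lemma~\ref{L:gamma-n-V} gives the desired identity. (The supermartingale inequality $\E[\gamma_{n+1}\mid\F_n] \le \gamma_n$ holds everywhere by Lemma~\ref{L:backward-induction-eqns}, which takes care of the inequality on $\{\sigma \le n\}$ automatically since both sides then agree.)

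For part (c), I would apply the martingale property of (a) at time $n$: $\E\gamma_{n\wedge\sigma} = \gamma_0 = -V_0(\bPi_0)$. Expanding via Lemma~\ref{L:gamma-n-V} and using $V_0 \ge 0$ yields
\begin{equation*}
c\,\E\sum_{k=0}^{(n\wedge\sigma)-1}(1-\Pi_k^{(0)}) \;=\; V_0(\bPi_0) - \E V_0(\bPi_{n\wedge\sigma}) \;\le\; V_0(\bPi_0) \;\le\; \|h\|.
\end{equation*}
Then I would rewrite $\sum_{k=0}^{(n\wedge\sigma)-1}(1-\Pi_k^{(0)}) = (n\wedge\sigma) - \sum_{k=0}^{(n\wedge\sigma)-1}\Pi_k^{(0)}$ and bound the latter sum by $\sum_{k=0}^{n-1}\Pi_k^{(0)}$, whose expectation is at most $\sum_{k\ge 0}(1-p)^k = 1/p$ by Proposition~\ref{P:PiProperties}(a). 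This gives $\E(n\wedge\sigma) \le \|h\|/c + 1/p$ for every $n$, and monotone convergence yields $\E\sigma \le \|h\|/c + 1/p < \infty$.

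For part (b), I would pass to the limit in $\E\gamma_{n\wedge\sigma} = -V_0(\bPi_0)$ using dominated convergence. The dominating variable is $c\sum_{k=0}^{\sigma-1}(1-\Pi_k^{(0)}) + \|h\|$, which is integrable by the bound derived in (c). Since $\sigma < \infty$ a.s.\ by (c), $\gamma_{n\wedge\sigma} \to \gamma_\sigma$ a.s., so $\E\gamma_\sigma = -V_0(\bPi_0)$. On $\{\sigma < \infty\}$, $\bPi_\sigma \in \Gamma$, hence $V_0(\bPi_\sigma) = h(\bPi_\sigma)$ and Lemma~\ref{L:gamma-n-V} gives $\gamma_\sigma = Y_\sigma$. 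Therefore $\E Y_\sigma = -V_0(\bPi_0)$. Finally, $\sigma \in C$ because $\sigma < \infty$ a.s.\ and $\E Y_\sigma^- \le c\E\sigma + \|h\| < \infty$, so $\sigma$ attains the supremum $-V_0(\bPi_0) = \sup_{\tau\in C}\E Y_\tau$ and is an optimal stopping time. The main obstacle is producing an integrable dominating random variable that is uniform in $n$; the identity in (c) — obtained from the stopped martingale itself — is exactly what supplies it, so the structure of the argument is self-bootstrapping through (a) $\Rightarrow$ (c) $\Rightarrow$ (b).
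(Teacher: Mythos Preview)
Your proof is correct and follows a somewhat different route from the paper's. For part (a), you invoke the optimality equation (Proposition~\ref{P:Dyn-prog-eqn}) and the Markov property directly, whereas the paper relies on the backward-induction identity $\gamma_n=\max\{Y_n,\E[\gamma_{n+1}\mid\F_n]\}$ of Lemma~\ref{L:backward-induction-eqns}; both arguments land on the same equality $\E[\gamma_{n+1}\mid\F_n]=\gamma_n$ on $\{\sigma>n\}$. The more substantive difference is the order of (b) and (c). The paper proves (b) first: from $-V_0=\E\gamma_{n\wedge\sigma}=\E[Y_\sigma\II{\sigma\le n}]+\E[\gamma_n\II{\sigma>n}]$ it applies Fatou's lemma (both terms are nonpositive), uses $\limsup_n\gamma_n=-\infty$ a.s.\ (via Remark~\ref{R:PiProperties}) to force $\P\{\sigma=\infty\}=0$, and only then deduces $\E\sigma<\infty$ from $\E Y_\sigma\ge -V_0>-\infty$. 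You instead prove (c) first by reading off the uniform bound $\E(n\wedge\sigma)\le\|h\|/c+1/p$ directly from the stopped-martingale identity, and then use dominated convergence for (b). Your ordering is slightly more self-contained---it avoids the Fatou/$\limsup$ step and yields the exact equality $\E Y_\sigma=-V_0(\bPi_0)$ in one stroke---while the paper's approach makes the role of $\Pi_n^{(0)}\to 0$ in forcing $\sigma<\infty$ more explicit.
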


Therefore, the pair $(\sigma, d^*)$ is an optimal sequential
decision strategy for \eqref{E:UDef1}, where the optimal stopping
rule $\sigma$ is given by Theorem~\ref{T:sigma-properties}, and,
as in the proof of Lemma~\ref{L:OSP1}, the optimal terminal
decision rule $d^*$ is given by
\begin{align*}
  d^* = j \quad \text{ on the event} \quad \{\sigma=n, \bPi_n\in
  \Gamma^{(j)}\} \quad \text{ for every } n\geq 0.
\end{align*}
Accordingly, the set $\Gamma$ is called the \emph{stopping region}
implied by $V_0(\cdot)$, and
Theorem~\ref{T:Gamma-decreasing-subsets} reveals its basic
structure.  We demonstrate the use of these results in the
numerical examples of Section~\ref{sec:Special-cases}.

Note that we can take a similar approach to prove that the
stopping rules $\sigma_N\triangleq\inf\{n\geq 0\,|\, \bPi_n \in
\Gamma_{N-n}\}, N\geq 0$ are optimal for the truncated problems
$V_0^N(\cdot), N\geq 0$ in (\ref{eq:value-functions}).  Thus, for
each $N\geq 0$, the set $\Gamma_{N}$ is called the stopping region
for $V_0^N(\cdot)$: it is optimal to terminate the experiments in
$\Gamma_N$ if $N$ stages are left before truncation.

\section{Special cases and examples}\label{sec:Special-cases}

In this section we discuss solutions for various special cases of the
general formulation given in Section \ref{sec:Problem-statement}.
First, we show how the traditional problems of Bayesian sequential
change detection and Bayesian sequential multi-hypothesis testing are
formulated via the framework of Section \ref{sec:Problem-statement}.
Then we present numerical examples for the cases $M=2$ and $M=3$. In
particular, we develop a geometrical framework for working with the
sufficient statistic developed in Section \ref{sec:Reformulation} and
the optimal sequential decision strategy developed in Section
\ref{sec:dynamic-programming-solution}. Finally, we solve the special
problem of detection and identification of primary component
failure(s) in a system with suspended animation.

\subsection{A.\ N.\ Shiryaev's sequential change detection problem}

Set $a_{0j}=1$ for $j\in \Mh$ and $a_{ij}=0$ for $i,j\in \Mh$, then
the Bayes risk function \eqref{E:BayesRiskUnderP} becomes
\begin{align*}
  R(\delta) &= c\,\E[(\tau-\theta)^+] + \E[a_{0
    d}\II{\tau<\theta}+a_{\mu d}\II{\theta\leq\tau<\infty}] =
  c\,\E[(\tau-\theta)^+]
  + \E[\II{\tau<\theta}]\\
  &= \P\{\tau<\theta\} + c\,\E[(\tau-\theta)^+].
\end{align*}
This is the Bayes risk studied by Shiryaev
\cite{MR0155708,MR0468067} to solve the sequential change
detection problem.

\subsection{Sequential multi-hypothesis testing}

Set $p_0=1$, then $\theta = 0$ a.s.\ and thus the Bayes risk
function \eqref{E:BayesRiskUnderP} becomes
\begin{align*}
  R(\delta) = c\,\E[(\tau-\theta)^+] + \E[a_{0
    d}\II{\tau<\theta}+a_{\mu d}\II{\theta\leq\tau<\infty}] = \E[c\tau
  + a_{\mu d}\II{\tau<\infty}].
\end{align*}
This gives the sequential multi-hypothesis testing problem studied by
Wald and Wolfowitz \cite{MR0034005}, Arrow, Blackwell, and Girshick
\cite{MR0032173}; see also Blackwell and Girshick \cite{MR597146}.

\subsection{Two alternatives after the change}
\label{sec:two-alternative-case}

In this subsection we consider the special case $M=2$ in which we have
only two possible change distributions, $f_1(\cdot)$ and $f_2(\cdot)$.
We describe a graphical representation of the stopping and
continuation regions for an arbitrary instance of the special case
$M=2$.  Then we use this representation to illustrate geometrical
properties of the optimal method (Section \ref{sec:optimal-soln}) via
model instances for certain choices of the model parameters $p_0$,
$p$, $\nu_1$, $\nu_2$, $f_0(\cdot)$, $f_1(\cdot)$, $f_2(\cdot)$,
$a_{01}$, $a_{02}$, $a_{12}$, $a_{21}$, and $c$.

\begin{figure}[h]
  \ifpdf
  \includegraphics{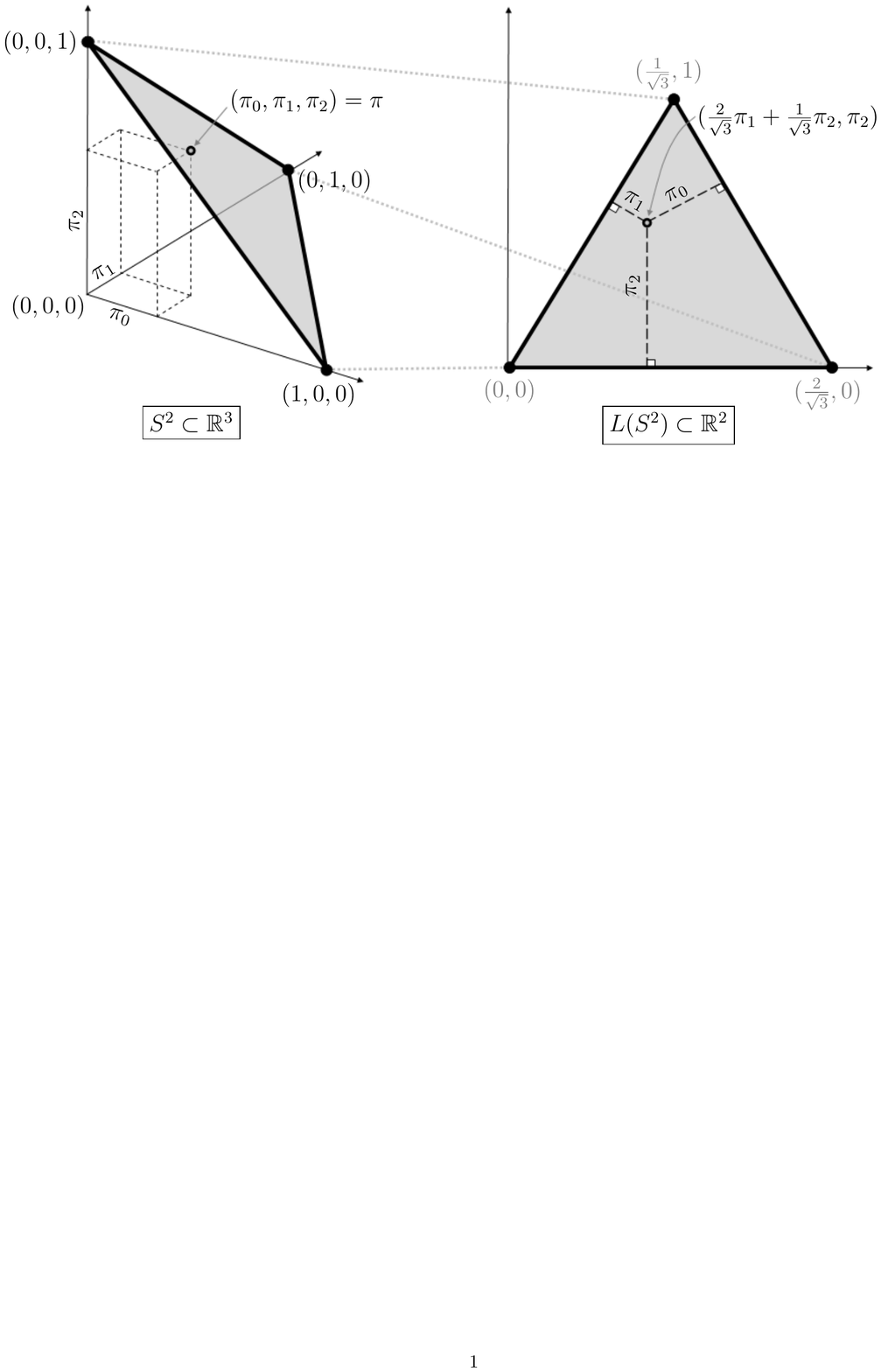}
  \fi
  \caption{Linear mapping $L$ of the standard two-dimensional
    probability simplex $S^2$ from the positive orthant of $\R^3$ into
    the positive quadrant of $\R^2$.} \label{F:S2-to-2D}
\end{figure}

Let the linear mapping $L:\R^3\mapsto\R^2$ be defined by
$L(\pi_0,\pi_1,\pi_2)\triangleq(\tfrac{2}{\sqrt{3}}\pi_1
+\tfrac{1}{\sqrt{3}}\pi_2,\pi_2)$. Since $\pi_0=1-\pi_1-\pi_2$ for
every $\bpi=(\pi_0,\pi_1,\pi_2)\in S^2\subset\R^3$, we can recover the
preimage $\bpi$ of any point $L(\bpi)\in L(S^2)\subset\R^2$.  For
every point $\bpi=(\pi_0,\pi_1,\pi_2)\in S^2$, the coordinate $\pi_i$
is given by the Euclidean distance from the image point $L(\bpi)$ to
the edge of the image triangle $L(S^2)$ that is \emph{opposite} the
image point $L(\e_i)$, for each $i=0,1,2$. For example, the
distance from the image point $L(\bpi)$ to the edge of the image
triangle opposite the lower-left-hand corner $L(1,0,0)=(0,0)$ is the
value of the preimage coordinate $\pi_0$. See Figure~\ref{F:S2-to-2D}.

Therefore, we can work with the mappings $L(\Gamma)$ and
$L(S^2\setminus\Gamma)$ of the stopping region $\Gamma$ and the
continuation region $S^2\setminus\Gamma$, respectively.
Accordingly, we depict the decision region for each instance in
this subsection using the two-dimensional representation as in the
right-hand-side of Figure~\ref{F:S2-to-2D} and we drop the
$L(\cdot)$ notation when labeling various parts of each figure to
emphasize their source in $S^2$.

Each of the examples in this section have the following model
parameters in common:
\begin{gather*}
  p_0=\tfrac{1}{50},\quad p=\tfrac{1}{20},\quad
  \nu_1=\nu_2=\tfrac{1}{2},\\
  f_0=\left(\tfrac{1}{4}, \tfrac{1}{4}, \tfrac{1}{4},
    \tfrac{1}{4}\right),\quad
  f_1=\left(\tfrac{4}{10}, \tfrac{3}{10}, \tfrac{2}{10},
    \tfrac{1}{10}\right),\quad
  f_2=\left(\tfrac{1}{10}, \tfrac{2}{10}, \tfrac{3}{10},
    \tfrac{4}{10}\right).
\end{gather*}
We vary the delay cost and false alarm/isolation costs to illustrate
certain geometrical properties of the continuation and stopping
regions.  See Figures~\ref{F:2D1},~\ref{F:2D2}, and ~\ref{F:2D3}.
\begin{figure}[h]
  \ifpdf
  \includegraphics{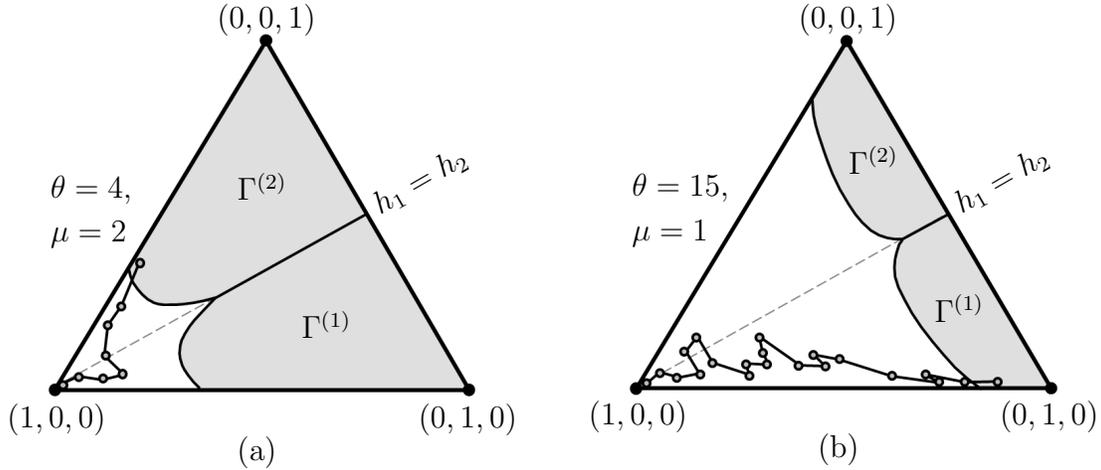}
  \fi
  \caption{Illustration of \emph{connected} stopping regions and the
    effects of variation in the false-alarm costs. (a) and (b):
    $a_{12}=a_{21}=3,\,c=1$. (a): $a_{01}=a_{02}=10$. (b):
    $a_{01}=a_{02}=50$.}
  \label{F:2D1}
\end{figure}
\begin{figure}[h]
  \ifpdf
  \includegraphics{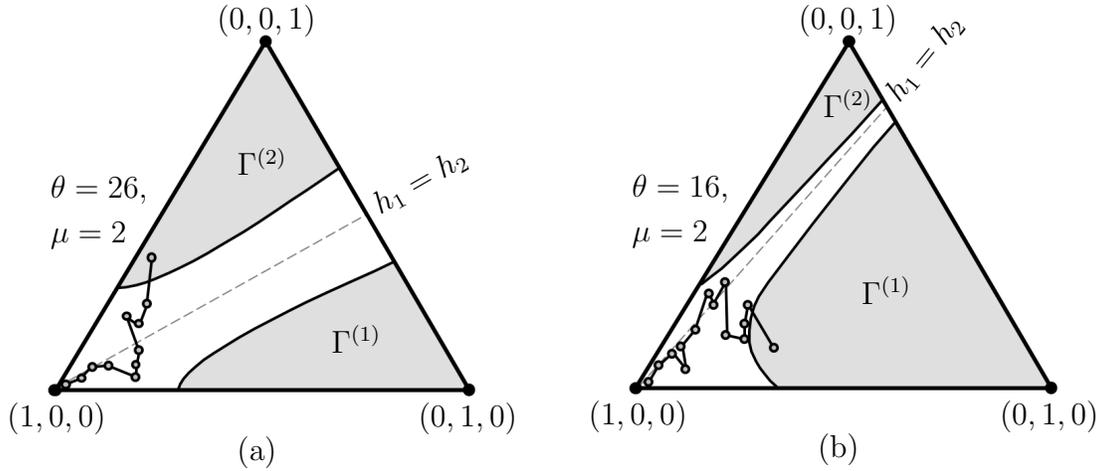}
  \fi
  \caption{Illustration of \emph{disconnected} stopping regions and
    the effects of asymmetric false-isolation costs. (a) and (b):
    $a_{01}=a_{02}=10,\,c=1$. (a): $a_{12}=a_{21}=10$. (b):
    $a_{12}=16,a_{21}=4$.}
  \label{F:2D2}
\end{figure}
\begin{figure}[h]
  \ifpdf
  \includegraphics{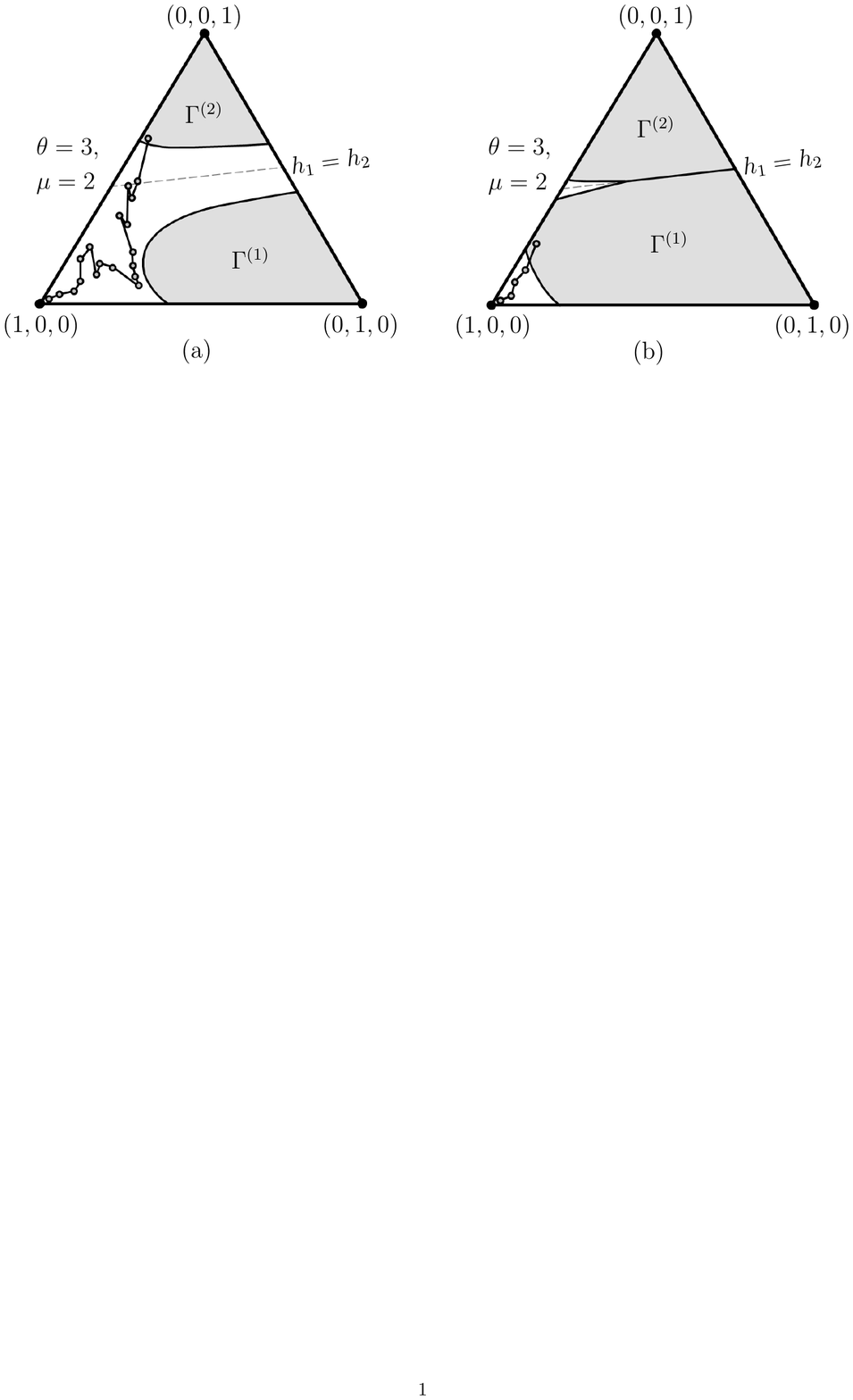}
  \fi
  \caption{Illustration of a \emph{disconnected} continuation region
    and the effects of variation in the delay cost. (a) and (b):
    $a_{01}=14,a_{02}=20,a_{12}=a_{21}=8$. (a): $c=1$.  (b):
    $c=2$.}\label{F:2D3}
\end{figure}

Specifically, these examples show instances in which the $M=2$ convex
subsets comprising the optimal stopping region are connected (Figure
\ref{F:2D1}) and instances in which they are not (Figures \ref{F:2D2}
and \ref{F:2D3}(a)). Figure \ref{F:2D3}(b) shows an instance in which
the continuation region is disconnected.

Each of the figures in this section have certain features in
common. On each subfigure there is a dashed line representing
those states $\bpi\in S^2$ at which $h_1(\bpi)=h_2(\bpi)$.  Also,
each subfigure shows a sample path of $(\bPi_n)_{n=0}^{\sigma}$
and the realizations of $\theta$ and $\mu$ for the sample.  The
shaded area, including its solid boundary, represents the optimal
stopping region, while the unshaded area represents the
continuation region.

An implementation of the optimal strategy as described in Section
\ref{sec:optimal-soln} is as follows: Initialize the statistic
$\bPi=(\bPi_n)_{n\geq 0}$ by setting
$\bPi_0=(1-p_0,p_0\nu_1,p_0\nu_2)$ as in part (c) of Proposition
\ref{P:PiProperties}. Use the dynamics of \eqref{E:Pi-Dynamics} to
update the statistic $\bPi_n$ as each observation $X_n$ is
realized. Stop taking observations when the statistic $\bPi_n$
enters the stopping region $\Gamma=\Gamma^{(1)}\cup\Gamma^{(2)}$
for the first time, possibly before the first observation is taken
(i.e., $n=0$). The optimal terminal decision is based upon whether
the statistic $\bPi_n$ is in $\Gamma^{(1)}$ or $\Gamma^{(2)}$ upon
stopping. Each of the sample paths in
Figures~\ref{F:2D1},~\ref{F:2D2},~and~\ref{F:2D3} were generated
via this algorithm.  As Figure~\ref{F:2D1} shows, the sets
$\Gamma^{(1)}$ and $\Gamma^{(2)}$ can intersect on their
boundaries and so it is possible to stop in their intersection.
In this case, either of the decisions $d=1$ or $d=2$ is optimal.

We use value iteration of the optimality
equation~\eqref{E:Dyn-prog-eqn} over a fine discretization of $S^2$ to
compute $V_0(\cdot)$ and generate the decision region for each
subfigure.  Because in the expression $V_0(\pi) =
\min\{h(\pi),c(1-\pi_0)+(\T V_0)(\pi)\}$ the value $V_0(\pi)$ for any
fixed initial condition $\Pi_0 = \pi$ on the left depends on the
\emph{entire} function $V_0(\cdot)$ on $S^M$ on the right, we have to
calculate $V_0(\cdot)$ (or approximate it by $V^N(\cdot)$) on the
entire space $S^M$. The resulting discretized decision region is
mapped into the plane via $L$.  

See Bertsekas \cite[Chapter 3]{MR2182753} for techniques of computing
the value function via the optimality equation such as value
iteration.  Solving the optimality equation by discretizing
high-dimensional state-space may not be the best option.  Monte Carlo
methods based on regression models for the value function seem to
scale better as the dimension of the state-space increases; see, for
example, Longstaff and Schwartz \cite{LS01}, Tsitsiklis and van Roy
\cite{tsitsiklis01regression}, Glasserman \cite[Chapter 8]{MR1999614}
for details.

\subsection{Three alternatives after the change}
\label{sec:three-alternatives}
    \begin{figure}[ht]
        \ifpdf
        \includegraphics[width=0.7\textwidth]{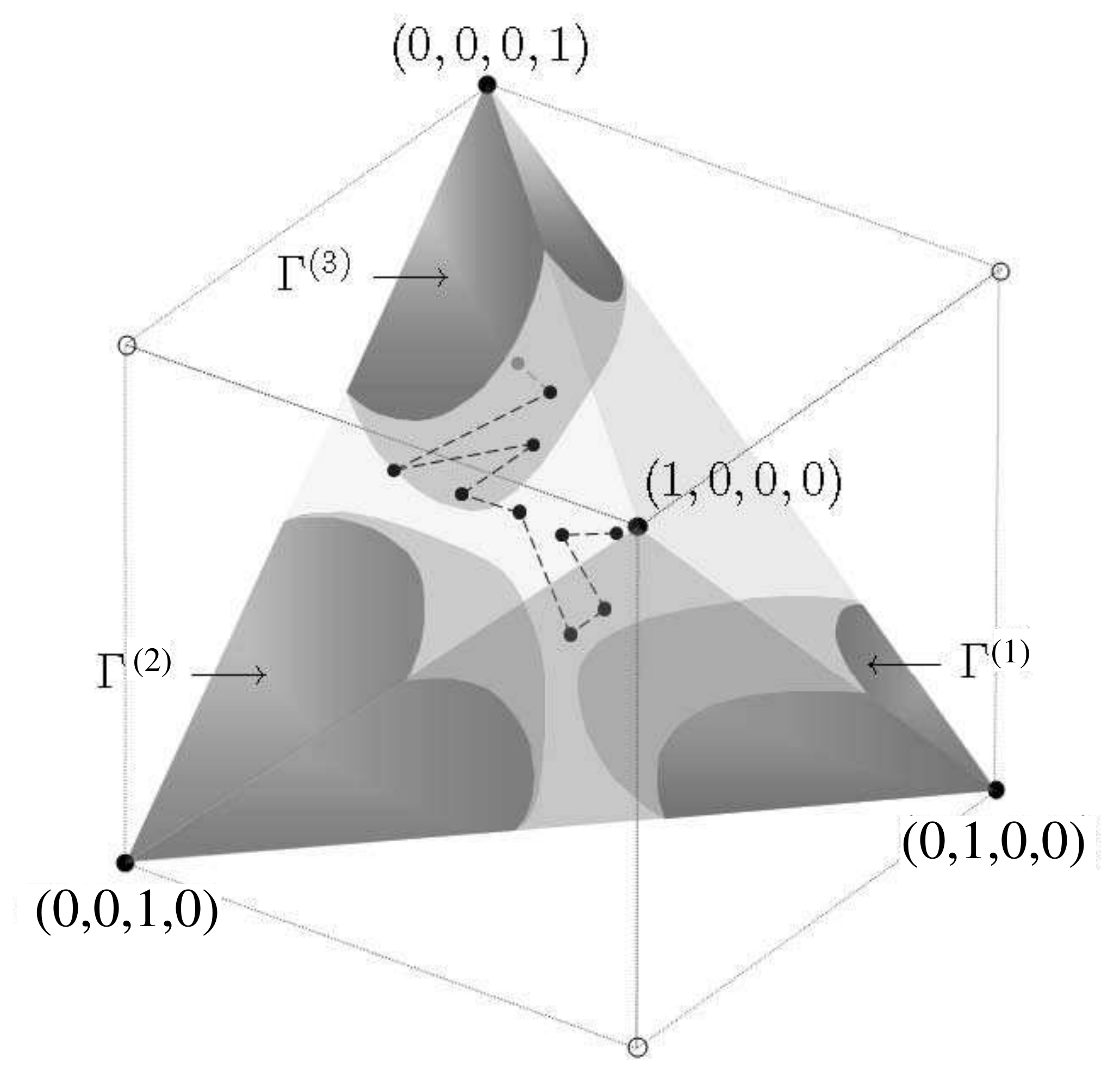}
        \fi
        \caption{Illustration of the mapped decision region for an
          instance of the special case $M=3$; see also Figure
          \ref{fig:param3D} below.  A sample path of the process
          $\bPi$ is shown in which $\theta=6$ and $\mu=3$}
        \label{F:3D}
    \end{figure}
In this subsection we consider the special case $M=3$ in which we
have three possible change distributions, $f_1(\cdot)$,
$f_2(\cdot)$, and $f_3(\cdot)$. Here, the continuation and
stopping regions are subsets of $S^3\subset\R^4$. Similar to the
two-alternatives case, we introduce the mapping of
$S^3\subset\R^4$ into $\R^3$ via
    \begin{align*}
        (\pi_0,\pi_1,\pi_2,\pi_3)\mapsto
        \left(\sqrt{\tfrac{3}{2}}\pi_1
        +\tfrac{1}{2}\sqrt{\tfrac{3}{2}}\pi_2
        +\tfrac{1}{2}\sqrt{\tfrac{3}{2}}\pi_3,
        \tfrac{3}{2}\sqrt{\tfrac{1}{2}}\pi_2 +
        \tfrac{1}{2}\sqrt{\tfrac{1}{2}}\pi_3, \pi_3\right).
    \end{align*}
Then we use this representation---actually a rotation of it---to
illustrate in Figure~\ref{F:3D} an instance with the following
model parameters:
    \begin{gather*}
        p_0=\tfrac{1}{50},\quad p=\tfrac{1}{20},\quad
        \nu_1=\nu_2=\nu_3=\tfrac{1}{3}, \\
        f_0=\left(\tfrac{1}{4}, \tfrac{1}{4}, \tfrac{1}{4},
        \tfrac{1}{4}\right),\quad
        f_1=\left(\tfrac{4}{10}, \tfrac{3}{10}, \tfrac{2}{10},
        \tfrac{1}{10}\right),\quad
        f_2=\left(\tfrac{1}{10}, \tfrac{2}{10}, \tfrac{3}{10},
        \tfrac{4}{10}\right),\quad
        f_3=\left(\tfrac{3}{10}, \tfrac{2}{10}, \tfrac{2}{10},
        \tfrac{3}{10}\right) \\
        c=1,\quad a_{0j}=40,\quad a_{ij}=20,\quad i,j=1,2,3.
    \end{gather*}

Note that Figure~\ref{F:3D} can be interpreted in a manner similar
to the figures of the previous subsection.  In this case, for
every point $\bpi=(\pi_0,\pi_1,\pi_2,\pi_3)\in S^3$, the
coordinate $\pi_i$ is given by the (Euclidean) distance from the
image point $L(\bpi)$ to the face of the image tetrahedron
$L(S^3)$ that is opposite the image corner $L(\e_i)$, for
each $i=0,1,2,3$.

\subsection{Detection and identification of component failure(s)
in a system with suspended animation}
\label{sec:suspended-animation}

Consider a system consisting initially of two working concealed
components (labeled 1 and 2) such that upon the failure of either
component, the system goes into a state of suspended animation.
That is, while both components are still working normally,
observations of output of the system have density $f_0(\cdot)$,
but upon failure of either component the density of observations
changes thereafter (until an alarm is raised) to one of two
alternatives: if component $2$ fails before component $1$, then
post-failure observations have density $f_2(\cdot)$, otherwise
they have density $f_1(\cdot)$. The problem is to detect quickly
when there has been a component failure \emph{and} to identify
accurately which component has actually failed based only on
sequential observations of output of the system.

Let the random variables
\begin{align*}
    \theta := \theta_1\wedge\theta_2 = \min\{\theta_1,\theta_2\}\quad\text{and}\quad\mu :=
    \left\{\begin{aligned}&1 &&\text{if }\theta_1\leq\theta_2\\&2 &&\text{if }\theta_1>\theta_2\end{aligned}\right.
\end{align*}
be respectively the time of failure of the first failed component
of the system and the corresponding index of this component, where
the failure time $\theta_i$ of the $i$th component is a random
variable having a geometric distribution with failure probability
$p_i$, $i=1,2$. It can be shown easily that when the disorder
times $\theta_1$ and $\theta_2$ are independent, the random
variable $\theta$ has a geometric distribution with failure
probability $p:=p_1+p_2-p_1p_2$ (or equivalently, $\theta$ has a
zero-modified geometric distribution with parameters $p_0=0$ and
$p$) and that it is independent of the random variable $\mu$,
which has distribution $\nu_1=p_1/p$ and $\nu_2=1-\nu_1$. So
although the failure type (i.e., which component has failed) is a
function of the failure times of each component, it turns out that
this problem fits properly within the Bayesian sequential change
diagnosis framework.

This problem can be extended naturally to several components and
solved via the technology of Sections~\ref{sec:Reformulation} and
\ref{sec:dynamic-programming-solution}.  In fact, it can be configured
for a variety of scenarios.  For example, series-connected components
where malfunction of one component suspends immediately the operation
of all the remaining components can appear in various electronic
relays and multicomponent electronic devices which have fuses to
protect the system from the misbehavior of one of its components.
Since the system may react differently to diagnostics run by the
operators, post-malfunction behavior can differ according to the
underlying cause of the malfunction.  See Barlow~\cite[Section
8.4]{MR1621421} for background on series systems with suspended
animation.  Consider also a manufacturing process where we perform a
quality test on the final output produced from several processing
components.  If a component is highly reliable then a geometric
distribution with a low failure rate can be a reasonable choice for
the lifetime of the component.  Moreover, since the typical duration
between successive component failures widens over time we can often
treat the remaining components as if they enter a state of suspended
animation under certain cost structures.  That is, we can expect the
remaining components to outlive the alarm.  For example, suppose that
two independent geometric random variables have expected lifetime of
$1000$ each.  Then the first failure will occur at about time $500$ on
average, while the second failure will take an additional $1000$
periods on average to occur.  As illustrated in
Figures~\ref{F:2D1}~and~\ref{F:2D3}, respectively, lower false-alarm
costs promote raising the alarm earlier, while a higher delay cost
discourages waiting for more than relatively few additional periods to
raise the alarm.

Specifically, suppose that in a ``black box" there are $K$
components whose lifetimes are independent and geometrically
distributed.  Observations have initially distribution
$f_0(\cdot)$ while the system is working, but upon failure of a
single component (or simultaneous failure of multiple components),
the remaining components enter a state of suspended animation, and
the post-failure distribution of observations is determined by the
failed component(s).  We want to detect the time when at least one
of them fails as soon as possible.  Moreover, when we raise an
alarm we would like to be able to make as accurately as possible
diagnoses such as (1) \emph{how many} of the components have
actually failed, and (2) \emph{which} ones.

Again, let the failure time $\theta_k$ of the $k$th component be a
random variable having a geometric distribution with failure
probability $p_k$, $k\in\mathcal{K}:=\{1,2,\ldots,K\}$, and define
\begin{equation*}
    \theta := \theta_1\wedge\theta_2\wedge\cdots\wedge\theta_K =
    \min_{k\in\mathcal{K}}\theta_k
\end{equation*}
as the time when at least one of the $K$ components fails.  Let
the mapping $\varphi:2^{\mathcal{K}}\mapsto\{0,1,\ldots\}$ be a
nonnegative-integer-valued measure on the discrete
$\sigma$-algebra $2^{\mathcal{K}}$ of the set
$\mathcal{K}=\{1,2,\ldots,K\}$ of component indices, and define
the random variable
\begin{equation*}
    \mu := \varphi(\{k\in\mathcal{K}\,|\,\theta=\theta_k\})
\end{equation*}
as an index function on the set of indices of the failed
components. When the random variables $\theta_1, \ldots, \theta_K$
are independent, it can be shown that the random variable $\theta$
has a geometric distribution with failure probability
\begin{equation*}
    p:=1-\prod_{i\in\mathcal{K}}(1-p_i)
\end{equation*}
and that it is independent of the random variable $\mu$, which has
distribution
\begin{equation*}
    \nu_k :=\frac{1}{p}\sum_{A\in\varphi^{-1}(k)}\prod_{i\in A}p_i
    \prod_{j\in\mathcal{K}\setminus A} (1-p_j),\quad
    k\in\Mh:=\{1,2,\ldots,M:=\varphi(\mathcal{K})\}.
\end{equation*}

So, the preceding example of two components corresponds to the
special case where $K=2$ and $\varphi(A)=\min A$ for $A\in
\{\{1\},\{1,2\},\{2\}\}$. We can handle the other two
aforementioned objectives as follows:

\begin{enumerate}
\item[(1)] Let $\varphi(A)=|A|,A\in 2^{\mathcal{K}}$.  Then the random
variable $\mu$ represents how many components fail.

\item[(2)] Let $\varphi(A)=\sum_{i\in A}2^{i-1},A\in 2^{\mathcal{K}}$.
Then the mapping $\varphi$ is one-to-one, the random variable
$\mu$ takes values in $1,2,\ldots,2^K-1$, and the set
$\varphi^{-1}(\mu)$ consists of the indices of the components
which fail; i.e., the random variable $\mu$ identifies uniquely
which components fail.
\end{enumerate}

%

%

\section{On the computer implementation of the change-diagnosis
  algorithm.}
\label{sec:computer-implementation}
Updating posterior probability process $\Pi$ online with a computer by
using the recursive equations in (\ref{eq:D-mappings}) and
(\ref{E:Pi-Dynamics}) is fast. However, programming a computer to
check online whether this process has just entered the optimal
stopping region is a challenging task. This is especially so because
(i) the critical boundaries of stopping regions do not have known
closed-form expressions, and (ii) extensive online computations to
determine if one of these boundaries is crossed can take excessive
time and defeat the purpose of quickest change detection.  Here we
outline an implementation strategy that should perform well in
general.

The strategy is based on sparse offline representations of critical
boundaries between stopping and continuation regions. Suppose that the
posterior probability process $\Pi$ has just been updated to some
$\pi=(\pi_0,\pi_1,\ldots,\pi_M) \in S^M$.  An alarm has to be
raised if and only if $\pi \in \Gamma \equiv \Gamma^{(1)} \cup \cdots
\cup \Gamma^{(M)}$. Checking $\pi \in \Gamma^{(i)}$ for every
$i=1,\ldots,M$ (in the worst case) is, however, unnecessary because
\begin{align*}
  \pi \in \Gamma \quad \Longleftrightarrow \quad \pi \in \Gamma_i \qquad
  \text{if} \quad h_i(\pi) = h(\pi) \equiv \min_{1\le j \le M}
  h_j(\pi).
\end{align*}
In other words, one should
\begin{enumerate}
\item find $i = \arg\min_{1\le j \le M} \;h_j(\pi)$ first, and
\item raise an alarm and declare that a change of type $i$ has
  happened if $\pi \in \Gamma^{(i)}$, or
\item wait for at least one more period before raising any alarm
  otherwise.
\end{enumerate}
Let us suppose that $i = \arg\min_{1\le j \le M} \;h_j(\pi)$. Checking
if $\pi \in \Gamma^{(i)}$ will be fast if both $\pi$ and
$\Gamma^{(i)}$ are represented in terms of polar coordinates, set up
locally relative to the corner of $S^M$ confined in the convex set
$\Gamma^{(i)}$.

\begin{figure}
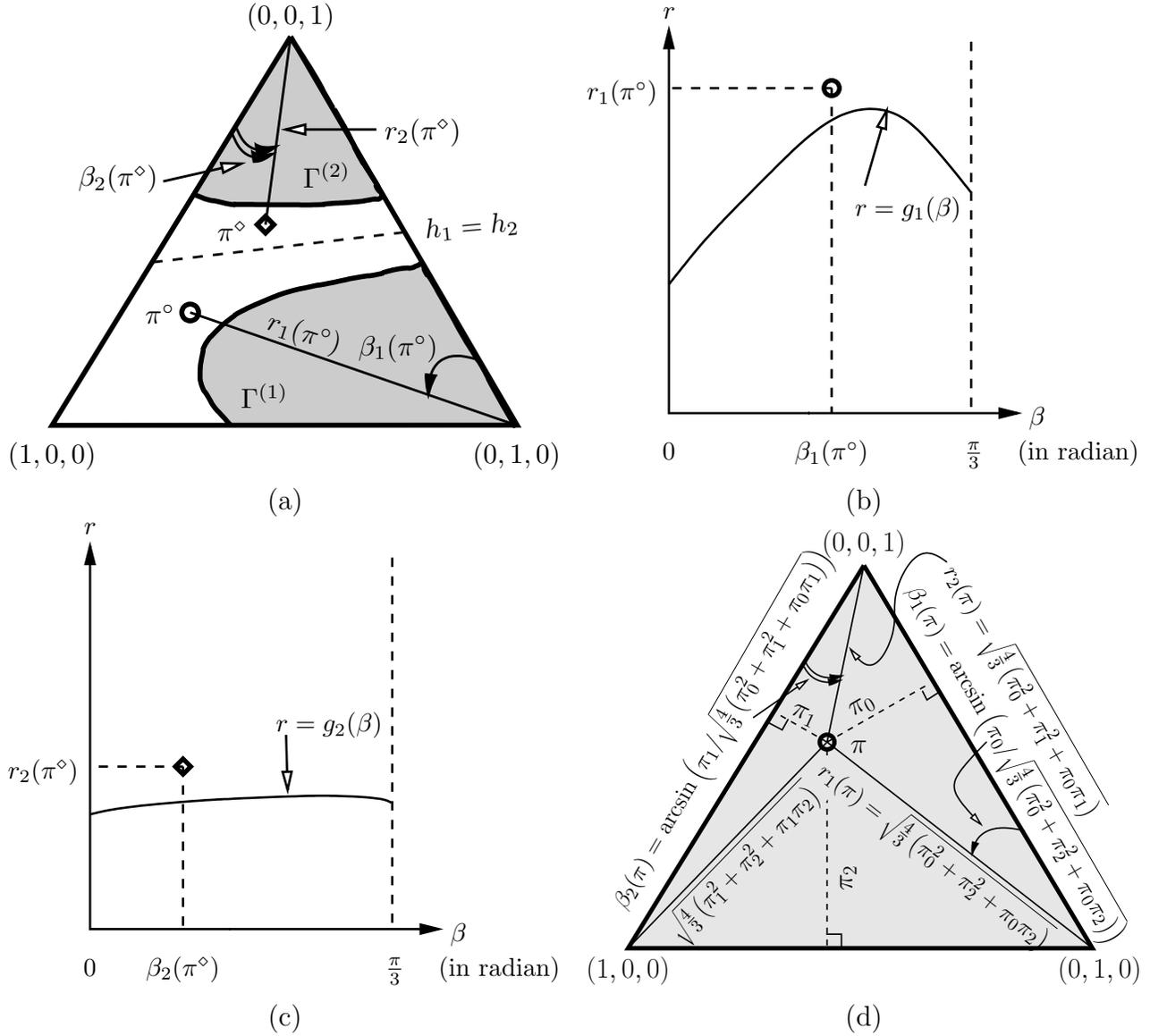

\begin{tabular}{cc}
\includegraphics[width=0.5\textwidth]{img2D3modified.mps}
&
\includegraphics[width=0.5\textwidth]{bndr2D1.mps} \\
(a) & (b) \\
\includegraphics[width=0.5\textwidth]{bndr2D2.mps} 
&
\includegraphics[width=0.5\textwidth]{param2D.mps} \\
(c) & (d) \\
\end{tabular}
\caption{For the sample problem displayed in Figure \ref{F:2D3}(a)
  ($M=2$), optimal stopping regions and local polar coordinate systems
  are shown in (a). The critical boundaries of the stopping regions
  $\Gamma^{(1)}$ and $\Gamma^{(2)}$ are expressed in terms of local
  polar coordinates in (b) and (c), respectively. In (d), polar
  coordinates of $\pi$ are stated in terms of its Cartesian
  coordinates. As in Section \ref{sec:two-alternative-case}, we drop
  $L$ from $L(\pi^{\circ})$, $L(\Gamma^{(1)})$, $L(1,0,0)$, etc.  and
  simply write $\pi^{\circ}$, $\Gamma^{(1)}$, $(1,0,0)$. In (a)
  $h_1(\pi^{\circ}) \le h_2(\pi^{\circ})$ and $h_1(\pi^{\diamond}) \ge
  h_2(\pi^{\diamond})$.}
\label{fig:param2D}
\end{figure}

To illustrate the ideas with simple pictures, we will focus on the
case that there are $M=2$ alternatives after the change; see Figure
\ref{fig:param2D}. If $\pi = \pi^{\circ}$ (respectively,
$\pi=\pi^{\diamond}$) as in Figure \ref{fig:param2D}(a), then $h_1(\pi) \le
h_2(\pi)$ and $i=1$ (respectively, $h_1(\pi) \ge h_2(\pi)$ and $i=2$).
In either case, $\pi$ can be identified relative to each corner in
terms of (i) the Euclidean distance to that corner (denoted by
$r_j(\pi)$, $j=0,1,2$) and (ii) one arbitrary but fixed angle (say, by
$\beta_j(\pi)$, $j=0,1,2,3$ indicated on Figure \ref{fig:param2D}(a)) between
the line connecting $\pi$ and the corner and the rays forming the same
corner. Every point on the critical boundary of the stopping region
$\Gamma^{(j)}$, $j=1,2$ admits the same representation. Let us express
by $r= g_j(\beta)$ the critical boundary of the stopping region
$\Gamma^{(j)}$ in terms of the polar coordinates $(\beta,r)$ measured
locally with respect to the corner of the simplex confined in
$\Gamma^{(j)}$ for $j=1,2$. Then $\pi^{\circ} \in \Gamma$ if and only
if $r_1(\pi^{\circ}) \le g_1(\beta_1(\pi^{\circ}))$, and
$\pi^{\diamond} \in \Gamma$ if and only if $r_2(\pi^{\diamond}) \le
g_2(\beta_2(\pi^{\diamond}))$; see Figures \ref{fig:param2D}(b) and
\ref{fig:param2D}(c), respectively.

The plan outlined above works well (i) if the local polar coordinates
of $\pi$ can be identified online quickly, (ii) if the local
representations $g_j(\cdot)$, $j=1,2$ of the critical boundaries can be
stored efficiently to the computer memory, and (iii) if from there
they can be retrieved and evaluated fast on demand. Next we will
explain how these requirements can be achieved.

Recall from Section \ref{sec:two-alternative-case} and Figure
\ref{F:S2-to-2D} that $\pi \in S^2\subset \R^3$ is embedded into the
equilateral triangle $L(S^2)\subset \R^2$ by means of a linear map
$\pi \mapsto L(\pi)$.  In this natural representation of posterior
distributions, $\pi=(\pi_0,\pi_1,\pi_2)$ is mapped to the point
$L(\pi)=\left(\frac{2}{\sqrt{3}}\pi_1+\frac{1}{\sqrt{3}}\pi_2,\pi_2\right)$,
whose Euclidean distance to the images $L(1,0,0)=(0,0)$,
$L(0,1,0)=(\frac{2}{\sqrt{3}},0)$, $L(0,0,1)=(\frac{1}{\sqrt{3}},1)$
(corners of the equilateral triangle $L(S^2)$) of $(1,0,0)$,
$(0,1,0)$, $(0,0,1)$ are
\begin{align*}
  r_0(\pi) &\triangleq \|L(1,0,0) - L(\pi)\| =
  \sqrt{\frac{4}{3}\left(\pi^2_1+
      \pi^2_2+ \pi_1 \pi_2\right)}, \\
  r_1(\pi) &\triangleq \|L(0,1,0) - L(\pi)\| =
  \sqrt{\frac{4}{3}\left(\pi^2_0+
      \pi^2_2+ \pi_0 \pi_2\right)}, \\
  r_2(\pi) &\triangleq \|L(0,0,1) - L(\pi)\| =
  \sqrt{\frac{4}{3}\left(\pi^2_0+ \pi^2_1+ \pi_0 \pi_1\right)},
\end{align*}
respectively; in a more compact way,
\begin{align}
\label{eq:distance-in-2D}
  r_i(\pi) = \sqrt{\frac{4}{3} \left(- \pi_i + \sum_{0\le j\le k \le 2}
      \pi_j\pi_k  \right) }, \quad i=0,1,2;
\end{align}
see Figure \ref{fig:param2D}(d). Because the Euclidean distance of
$L(\pi)$ to the edges opposite to the corners $L(1,0,0)$, $L(0,1,0)$,
$L(0,0,1)$ are $\pi_0$, $\pi_1$, and $\pi_2$, respectively, the angles
identified in Figure \ref{fig:param2D}(a) can also be calculated
easily by
\begin{align*}
  \beta_1(\pi) = \arcsin \frac{\pi_0}{\sqrt{\frac{4}{3}\left(\pi^2_0+
        \pi^2_2+ \pi_0 \pi_2\right)}} \quad \text{and} \quad
  \beta_2(\pi) = \arcsin \frac{\pi_1}{\sqrt{\frac{4}{3}\left(\pi^2_0+
        \pi^2_1+ \pi_0 \pi_2\right)}}; 
\end{align*}
or more compactly by
\begin{align}
\label{eq:angle-in-2D}
  \beta_i(\pi) = \frac{\pi_{i+2 \bmod{3}}}{r_i(\pi)} = \frac{\pi_{i+2
      \bmod{3}}}{\sqrt{\frac{4}{3} \left(- \pi_i + \sum_{0\le j\le k \le
          2} \pi_j\pi_k \right) }}, \qquad i=0,1,2.
\end{align}
Recall that at any $\pi \in S^2$ one has to calculate $\beta_i(\pi)$
and $r_i(\pi)$ only for $i = \arg\min_{1\le j \le 3 } h_j(\pi)$ and
check if $r_i(\pi) \le g_i (\beta_i(\pi))$ before raising an alarm.

Unfortunately, an exact/closed-form representation $r=g_j(\beta)$ of
the critical boundary of the stopping region $\Gamma^{(j)}$, $j=1,2$
in terms of the local polar coordinates $(\beta, r)$ relative to the
corner confined in $\Gamma^{(j)}$ will almost never be available.
Instead, only noisy observations (due to the discretization of the
state-space $S^2$ and termination of the value iteration at some
finite stage) of that relation can be obtained from the pairs
$(\beta_j(\pi),r_j(\pi))$ for every grid-point $\pi$ on the
(approximate) critical boundary of $\Gamma^{(j)}$ for every $j=1,2$.
Interpolation between those points will certainly give an
approximation for $r=g_j(\beta)$ for $j=1,2$, but this may waste a lot
storage space and computational time during online evaluations,
especially when the grid on $S^2$ is fine.  Instead, one can use some
statistical smoothing technique to compress the data with minimum loss
of information.

Let us suppose that $N$ observations $(\beta^{(k)},r^{(k)})$,
$k=1,\ldots,N$ follow the model $r^{(k)} = g_1(\beta^{(k)}) +
\varepsilon^{(k)}$ for every $k=1,\ldots,N$ and that
$\varepsilon^{(k)}$, $k=1,\ldots,N$ are i.i.d.\ random variables with
zero mean and some finite common variance.  Because $\Gamma^{(1)}$ is
convex, the function $\beta\mapsto g_1(\beta)$ is concave, namely,
fairly smooth. It may be plausible to approximate it by a cubic spline
(twice continuously differentiable piecewise cubic polynomial). The
unique curve $\beta \mapsto \widehat{g}_1(\beta)$ that has the minimum
penalized sum of squared errors
\begin{align}
  \label{eq:penalized-sum-of-squares}
  S_{\lambda}(\widehat{g}_1) \triangleq \sum^N_{k=1}
  \left[r^{(k)}-\widehat{g}_1(\beta^{(k)})\right]^2 + \lambda \int_{\R}
  \left[\widehat{g}_1''(\beta) \right]^2 d\beta,
\end{align}
for any arbitrary but fixed smoothing parameter $\lambda>0$, among all
twice-differentiable curves is known to exist and belong to the family
of cubic splines whose break-points are at $\beta_1,\ldots,\beta_N$;
see, for example, de Boor \cite{MR1900298}, Green and Silverman
\cite{MR1270012}, Ramsay and Silverman \cite{MR2168993}.  This
optimality property and the ability to control the smoothness
continuously through $\lambda$ make cubic splines an attractive
candidate for an approximate $g_1(\cdot)$.  If the variation of the
original curve $\beta\mapsto g_1(\beta)$ is moderate, then the number
of break-points $0\le K \le N$ can be taken significantly less than the
number of measurements $N$, and there are $O(K)$-algorithms that find
the cubic spline minimizing (\ref{eq:penalized-sum-of-squares}) with
the given $K$ break-points; see, for example, Green and Silverman
\cite[Section 2.3.3]{MR1270012} for Reinsch algorithm.  Other
algorithms represent the solution as a basis-function expansion
\begin{align*}
  \widehat{g}_1 (\beta) = \sum^{K+3}_{j=1} c_j \Phi_j(\beta)
\end{align*}
in terms of $K+3$ spline basis functions $\Phi_1,\ldots, \Phi_{K+3}$,
and solve the minimization problem in
(\ref{eq:penalized-sum-of-squares}) by finding the coefficients
$c_1,\ldots,c_{K+3}$ using multiple-regression; see Green and
Silverman \cite[Section 3.6]{MR1270012}, Ramsay and Silverman
\cite[Section 3.5 and Chapter 5]{MR2168993}. Thus, the approximation
$\widehat{g}_1(\cdot)$ of $g_1(\cdot)$ can be stored to the computer
memory for online use of the change-diagnosis algorithm by means of
only $K+3$ numbers $c_1,\ldots,c_{K+3}$. The basis functions
$\Phi_1,\Phi_2,\ldots$ are cubic splines with compact support and can
be stored easily and evaluated fast online.

All of the above ideas apply without affecting significantly the
online performance of the diagnosis algorithm when the number of
alternatives $M$ after change is larger than two. For example, if
$M=3$, then $S^3\subset \R^4$ is embedded into a tetrahedron $L(S^3)
\subseteq \R^3$ by a linear map $\pi \mapsto L(\pi)$ defined in
Section \ref{sec:three-alternatives}. The Euclidean distance of
$L(\pi)$ to the images $L(1,0,0,0)$, $L(0,1,0,0)$, $L(0,0,1,0)$,
$L(0,0,0,1)$ of $(1,0,0,0)$, $(0,1,0,0)$, $(0,0,1,0)$, $(0,0,0,1)$ are
given by
\begin{align*}
  r_0(\pi) &\triangleq \|L(1,0,0,0) - L(\pi)\| =
  \sqrt{\frac{3}{2}\left(\pi^2_1+\pi^2_2+ \pi^2_3+ \pi_1 \pi_2 +
      \pi_1\pi_3 +\pi_2\pi_3\right)}, \\
  r_1(\pi) &\triangleq \|L(0,1,0,0) - L(\pi)\| =
  \sqrt{\frac{3}{2}\left(\pi^2_0+\pi^2_2+ \pi^2_3+ \pi_0 \pi_2 +
      \pi_0\pi_3 + \pi_2\pi_3\right)}, \\
  r_2(\pi) &\triangleq \|L(0,0,1,0) - L(\pi)\| =
  \sqrt{\frac{3}{2}\left(\pi^2_0+ \pi^2_1+ \pi^2_3 + \pi_0\pi_1 +
      \pi_0\pi_3 + \pi_1\pi_3 \right) }, \\
  r_3(\pi) &\triangleq \|L(0,0,0,1) - L(\pi)\| =
  \sqrt{\frac{3}{2}\left(\pi^2_0+ \pi^2_1+ \pi^2_2 + \pi_0\pi_1 +
      \pi_0\pi_2 + \pi_1\pi_2 \right) },
\end{align*}
respectively; or more compactly
\begin{align}
\label{eq:ditance-in-3D}
  r_i(\pi) = \sqrt{\frac{3}{2} \left(- \pi_i + \sum_{0\le j\le k \le 3}
      \pi_j\pi_k  \right) }, \quad i=0,1,2,3;
\end{align}
see Figure \ref{fig:param3D}. Because the Euclidean distance of
$L(\pi)$ to the faces of the tetrahedron opposite to the corners
$L(1,0,0,0)$, $L(0,1,0,0)$, $L(0,0,1,0)$, $L(0,0,0,1)$ are $\pi_0$,
$\pi_1$, $\pi_2$, and $\pi_3$, respectively, the distance $r_i(\pi)$
and two arbitrary but fixed angles, $\beta_i(\pi)=(\beta_{i1}(\pi),
\beta_{i2}(\pi))$, out of three angles defined by
\begin{align}
\label{eq:angle-in-3D}
  \arcsin \frac{\pi_j}{r_i(\pi)} = \arcsin
  \frac{\pi_j}{\sqrt{\frac{3}{2} \left(- \pi_i + \sum_{0\le k \le \ell
          \le 3} \pi_k\pi_{\ell} \right) }}, \quad 0\le j\le 3,\;
  j\not= i
\end{align}
form the local polar coordinates $(\beta_i(\pi),r_i(\pi))$ with
respect to the corner of the simplex confined in $\Gamma^{(i)}$, $0\le
i \le 3$ and determine $L(\pi)$ uniquely.

\begin{figure}
  \includegraphics[width=\textwidth]{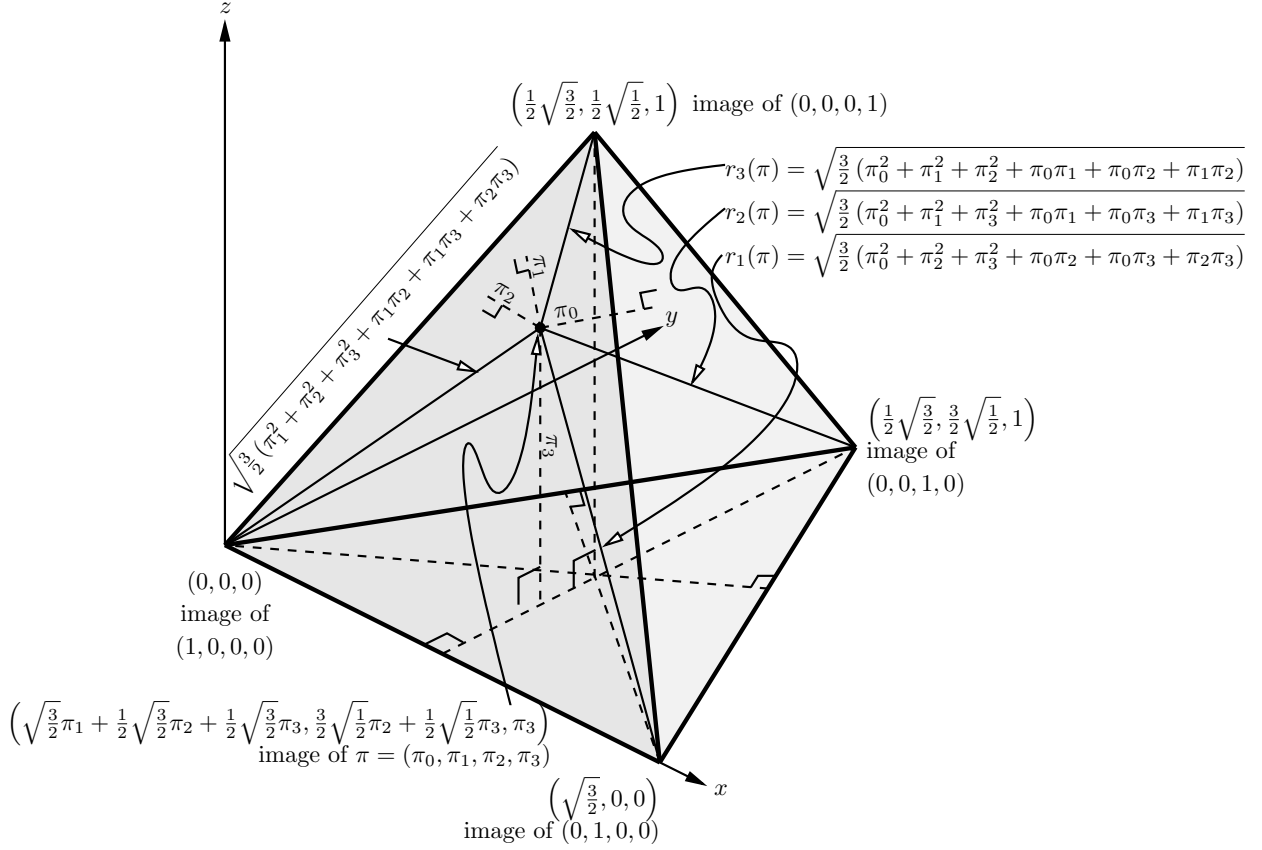}
  \caption{Polar coordinates of $\pi$ (after transformation by $L$;
    see Section \ref{sec:three-alternatives}) in terms of its
    Cartesian coordinates.}
\label{fig:param3D}
\end{figure}

The critical boundary between stopping region $\Gamma^{(i)}$, $1\le i
\le 3$ and the continuation region can be represented by some concave
surface $r = g_i(\beta)$ in terms of the same local polar coordinate
system $(\beta, r)$ just defined above in the vicinity of
$\Gamma^{(i)}$, where $\beta=(\beta_1,\beta_2)$ is now a vector.  If
$(\beta^{(k)}, r^{(k)})$, $k=1,\ldots,N$ are the pairs
$(\beta_1(\pi),r_1(\pi))$ evaluated at grid-points $\pi$ on the
approximate boundary of $\Gamma^{(1)}$, then one can fit a \emph{thin
  plane spline} $\widehat{g}_1(\cdot)$, which is twice continuously
differentiable and minimizes the penalized sum of squared errors
\begin{align*}
  S_{\lambda}(\widehat{g}_1) \triangleq \sum^N_{k=1}
  \left[r^{(k)}-\widehat{g}_1(\beta^{(k)}) \right]^2 + \lambda
  \sum_{1\le i,j \le 2} \iint_{\R^2}   \left(\frac{\partial^2
        \widehat{g}_1}{\partial\beta_i\partial
        \beta_j}\right)^2 (\beta_1,\beta_2) d\beta_1\,d\beta_2
\end{align*}
among all twice-differentiable curves on $\R^2$ for every arbitrary
but fixed smoothing parameter $\lambda>0$. As before, $\widehat{g}_1
(\beta) = \sum^{K+3}_{j=1} c_j \Phi_j(\beta)$ admits a basis-function
expansion, and the coefficients $c_1,\ldots,c_{K+3}$ can be found by
using multiple-regression and stored in the computer memory for the
online use of change-diagnosis algorithms. See Green and Silverman
\cite[Chapter 7]{MR1270012} for statistical data smoothing in three
and higher dimensional Euclidean spaces by using thin plate splines.
The similarity of the local polar coordinates
(\ref{eq:distance-in-2D}), (\ref{eq:angle-in-2D}) for $M=2$ and
(\ref{eq:ditance-in-3D}), (\ref{eq:angle-in-3D}) for $M=3$ suggest
that for general $M\ge 2$ and for a suitable constant $c_M>0$
\begin{align*}
  r_i(\pi) = \sqrt{c_M \left( -\pi_i + \sum_{0\le j \le k \le M}
      \pi_j \pi_k \right)}, \quad i=0,1,\ldots,M
\end{align*}
and $M-1$ arbitrary but fixed angles, $\beta_i(\pi) =
(\beta_{i,1},\ldots,\beta_{i,M-1})$, out of $M$ angles defined by
\begin{align*}
  \arcsin \frac{\pi_j}{r_i(\pi)} = \arcsin
  \frac{\pi_j}{\sqrt{c_M \left(- \pi_i + \sum_{0\le k \le \ell
          \le M} \pi_k\pi_{\ell} \right) }}, \quad 0\le j\le M,\;
  j\not= i
\end{align*}
form a local polar coordinate system $(\beta_i(\pi),r_i(\pi))$ with
respect to the corner of the simplex $S^M\subset \R^M$ confined in
stopping region $\Gamma^{(i)}$, $1\le i \le M$ after a suitable linear
transformation $L$ into $\R^{M-1}$.

\section*{Acknowledgment}
The authors thank two anonymous referees and the associate editor for
their careful reading and valuable suggestions that improved the
readability of the manuscript.  The research of Savas Dayanik was
supported partially by the Air Force Office of Scientific Research,
under grant AFOSR-FA9550-06-1-0496 and by the U.S. Department of
Homeland Security through the Center for Dynamic Data Analysis for
Homeland Security administered through ONR grant number
N00014-07-1-0150 to Rutgers University.  The research of H.\ Vincent
Poor was supported in part by the U.S.\ Army Pantheon Project.

\appendix
\section{Proofs}

\subsection{Proof of Proposition \ref{P:BayesRiskInTermsOfPi}}

Note that since $\{\tau>n\} \in \F_n$ for every $n\ge 0$, we have
\begin{align*}
  \E\left[(\tau-\theta)^+\right]
  = \E\left[\sum_{n=0}^{\infty}\II{\theta \leq n<\tau}\right]
   = \sum_{n=0}^{\infty} \E[\II{\tau>n} \P\left(\theta\leq
     n\,|\,\F_n\right)]
    = \E\left[\sum_{n=0}^{\tau-1}
    \left(1-\Pi_n^{(0)}\right)\right].
\end{align*}
Moreover, for every $j\in \Mh$, we have $\{\tau=n, d= j\} \in \F_n$,
and $\E\left[\II{d=j}\II{\tau<\theta}\right]$ equals
\begin{multline*}
    \sum_{n=0}^{\infty} \E\left[\II{\tau=n,d=j}\II{\theta>n}\right]
    = \sum_{n=0}^{\infty} \E\left[\II{\tau=n,d=j} \Pi_n^{(0)} \right]
    = \lim_{N\to \infty} \sum_{n=0}^{N} \E\left[\II{\tau=n,d=j}
      \Pi_{\tau}^{(0)} \right] \\
    = \lim_{N\to \infty} \E\left[\sum_{n=0}^{N} \II{\tau=n,d=j}
      \Pi_{\tau}^{(0)} \right] = \lim_{N\to \infty} \E\left[
      \II{\tau\le N,d=j} \Pi_{\tau}^{(0)} \right] =
    \E\left[\II{\tau<\infty,d=j} \Pi_\tau^{(0)} \right]
    \end{multline*}
    because of the monotone convergence theorem and that $\lim_{N\to
      \infty} \{\tau \le N\} = \cup^{\infty}_{n=1} \{\tau \le n\} =
    \{\tau < \infty\}$; see, for example, Ross \cite{MR683455}.
    Similarly,
    $\E\left[\II{d=j,\mu=i}\II{\theta\leq\tau<\infty}\right]$ equals
\begin{align*}
  \sum_{n=0}^{\infty} \E\left[\II{\tau=n,d=j}\II{\theta\leq n,\mu=i}\right]
  = \sum_{n=0}^{\infty}
  \E\left[\II{\tau=n,d=j} \Pi_n^{(i)} \right] =
  \E\left[\II{\tau<\infty,d=j}\Pi_\tau^{(i)}\right],
    \end{align*}
    for every $i\in \Mh$.
    \begin{short-version}
  Plugging these expressions into \eqref{E:BayesRiskUnderP} completes
  the proof.
\end{short-version}
\begin{long-version}
  So, we can write the Bayes risk function in
  (\ref{E:BayesRiskUnderP}) as
  \begin{align*}
    R(\delta) &= \E\left[c(\tau-\theta)^+ + \sum_{j=1}^{M} \II{d=j}
      \left(a_{0j}\II{\tau<\theta} + \sum_{i=1}^{M} a_{ij}\II{\mu=i}
        \II{\theta\leq\tau<\infty}\right) \right]\\
    &= \E\left[\sum_{n=0}^{\tau-1}\left(1-\Pi_0^{(0)}\right) +
      \II{\tau<\infty}\sum_{j=1}^{M}\II{d=j}\left(a_{0j}
        \Pi_\tau^{(0)} +\sum_{i=1}^{M}a_{ij} \Pi_\tau^{(i)}
      \right)\right].
  \end{align*}
\end{long-version}
\qed

\subsection{Proof of Proposition \ref{P:PiProperties}}

Parts (a) and (b).
Fix any $ A = \{(X_1,\ldots,X_n)\in B \}\in \F_{n}$ for some
Borel $B\subset E^n.$ Then
(\ref{eq:finite-dimensional-distributions}) implies that
\begin{align}
  \P(A)
  = \int_{B} m(dx_1)\cdots m(dx_n) \alpha_n (x_1,\ldots,x_n)
  \label{E:integral-of-A}
\end{align}
where $\alpha_n (x_1,\ldots,x_n) \triangleq \sum_{i=0}^{M}
\alpha_n^{(i)}(x_1,\ldots,x_n)$, and
\begin{align*}
  \alpha_n^{(i)}(x_1,\ldots,x_n) \triangleq
\left\{
\begin{aligned}
&(1-p_0)(1-p)^n\prod_{l=1}^{n}f_0(x_l), && i=0,\\
& p_0\nu_i\prod_{j=1}^{n}
  f_i(x_j)+(1-p_0)p\nu_i \sum_{k=1}^{n}(1-p)^{k-1}\prod_{l=1}^{k-1}
  f_0(x_l)\prod_{j=k}^{n} f_i(x_j), && i\in \Mh.
\end{aligned}
\right.
\end{align*}
Hence, $\alpha_n(x_1,\ldots,x_n)$ is the joint probability density
function of $X_1,\ldots,X_n$ with respect to the measure
$m(dx_1)\cdots m(dx_n)$. Now for $i\in \Mh$,
\begin{multline*}
  \int_{A}\Pi_n^{(i)} d\P  = \E\left[ \I_A\II{\theta\leq n,\mu=i}
  \right]
  =  \int_{B} m(dx_1)\cdots m(dx_n)\; \alpha_n^{(i)}(x_1,\ldots,x_n)\\
  = \int_{B}\,m(dx_1)\cdots m(dx_n) \, \alpha_n (x_1,\ldots,x_n)
  \frac{\alpha_n^{(i)}(x_1,\ldots,x_n)}{ \alpha_n (x_1,\ldots,x_n)} =
  \int_{A}d\P\frac{\alpha_n^{(i)}(X_1,\ldots,X_n)}
  {\alpha_n(X_1,\ldots,X_n)}.
\end{multline*}
Hence,
\begin{align}
\label{eq:pi}
\Pi_n^{(i)} = \frac{\alpha_n^{(i)}(X_1,\ldots,X_n)}{\alpha_n
(X_1,\ldots,X_n)},\; i\in \Mh, \quad \text{and}\quad \Pi_n^{(0)} =
\frac{\alpha_n^{(0)}(X_1,\ldots,X_n)}{\alpha_n (X_1,\ldots,X_n)},
\end{align}
since $\sum_{i=0}^{M}\Pi_n^{(i)}=1$. Similar considerations also give
\begin{long-version}
\begin{align*}
  \int_{A}\Pi_n^{(i)} d\P &= \int_{A} \II{\theta\leq n,\mu=i} d\P
  =   \E\left[ \I_A\II{\theta\leq n,\mu=i} \right]\\
  &=  \sum_{k=0}^{n}\P\left\{\theta=k,\mu=i\right\}\P\left\{A \,|\, \theta=k,\mu=i\right\}\\
  &= p_0 \nu_i \int_{B} \prod_{j=1}^{n} f_i(x_j)
  \,m(dx_1)\cdots m(dx_n)\\
  &\quad + (1-p_0)p\nu_i\sum_{k=1}^{n} (1-p)^{k-1}
  \int_{B}\prod_{l=1}^{k-1} f_0(x_l)\prod_{j=k}^{n} f_i(x_j)
  \,m(dx_1)\cdots m(dx_n)\\
  &=  \int_{B}\alpha_n^{(i)}(x_1,\ldots,x_n)\,m(dx_1)\cdots m(dx_n).\\
  &= \int_{B}\,m(dx_1)\cdots
  m(dx_n)\left[\sum_{j=0}^{M}\alpha_n^{(j)}(x_1,\ldots,x_n)\right]
  \frac{\alpha_n^{(i)}(x_1,\ldots,x_n)}{\sum_{j=0}^{M}\alpha_n^{(j)}(x_1,\ldots,x_n)}\\
  &=
  \int_{A}d\P\frac{\alpha_n^{(i)}(X_1,\ldots,X_n)}{\sum_{j=0}^{M}\alpha_n^{(j)}(X_1,\ldots,X_n)},
\end{align*}
where the last two equalities follow by dividing and multiplying the
integrand $\alpha_n^{(i)}(x_1,\ldots,x_n)$ by the bracketed expression
in (\ref{E:integral-of-A}). Hence,
\begin{align*}
  \Pi_n^{(i)} &=
  \frac{\alpha_n^{(i)}(X_1,\ldots,X_n)}{\sum_{j=0}^{M}\alpha_n^{(j)}(X_1,\ldots,X_n)},\quad
  i=1,\ldots,M.
\end{align*}
Also, since $\sum_{i=0}^{M}\Pi_n^{(i)}=1$, we obtain
\begin{align*}
  \Pi_n^{(0)} &=
  \frac{\alpha_n^{(0)}(X_1,\ldots,X_n)}{\sum_{j=0}^{M}\alpha_n^{(j)}(X_1,\ldots,X_n)}.
\end{align*}
Next, let us calculate the joint conditional distribution
\begin{align*}
  \P\{\theta=k,\mu=i\,|\,\F_n\},\quad k\ge 0,\quad i\in \Mh
\end{align*}
of $\theta$ and $\mu$ given $\F_n=\sigma(X_1,\ldots,X_n)$ for $n\ge 0$
and $\F_0=\{\varnothing,\Omega\}$.  Again, fix any
$A=\{(X_1,\ldots,X_n)\in B\}\in\F_n$ for some Borel $B\in E^n$. Then
for $k\geq 0$ and $i=1,\ldots,M$,
\begin{align*}
  \int_{A} \P\{\theta=k,\mu=i\,|\,\F_n\} d\P
  &=  \P\left(\{\theta=k,\mu=i\} \cap A\right) \\
  &= \P\{\theta=k\}\nu_i\,\P\{A\,|\,\theta=k,\mu=i\}.
\end{align*}
Letting $k=0$ gives
\begin{align*}
  \int_{A} \P\{\theta=0,\mu=i\,|\,\F_n\} d\P
  &= p_0\nu_i\P\{A\,|\,\theta=0,\mu=i\}\\
  &= p_0\nu_i\int_{(x_1,\ldots,x_n)\in B}m(dx_1)\cdots m(dx_n)
  \prod_{j=1}^{n}f_i(x_j)\\
  &= \int_{B}m(dx_1)\cdots m(dx_n)
  \left[\sum_{j=0}^{M}\alpha_n^{(j)}(x_1,\ldots,x_n)\right]
  \frac{p_0\nu_i\prod_{j=1}^{n}f_i(x_j)}{\sum_{j=0}^{M}\alpha_n^{(j)}(x_1,\ldots,x_n)}\\
  &=
  \E\left[1_{A}\,\frac{p_0\nu_i\prod_{j=1}^{n}f_i(X_j)}{\sum_{j=0}^{M}\alpha_n^{(j)}(X_1,\ldots,X_n)}\right].
\end{align*}
Therefore,
\begin{align*}
  \P\{\theta=0,\mu=i\,|\,\F_n\}
  =\frac{p_0\nu_i\prod_{j=1}^{n}f_i(X_j)}{\sum_{j=0}^{M}\alpha_n^{(j)}(X_1,\ldots,X_n)}.
\end{align*}
By similar evaluations for different values of $k$, we obtain
\end{long-version}
\begin{align*}
  \P\{\theta=k,\mu=i\,|\,\F_n\} &= \left\{
    \begin{aligned}
      &\frac{p_0\nu_i}{\alpha_n(X_1,\ldots,X_n)}
      \prod_{j=1}^{n}f_i(X_j),      && k=0, \\
      &\frac{(1-p_0)(1-p)^{k-1}p\nu_i}
      {\alpha_n(X_1,\ldots,X_n)}\prod_{l=1}^{k-1}
    f_0(X_l)\prod_{j=k}^{n}f_i(X_j),    && 1\le k \le n, \\
    &\frac{(1-p_0)(1-p)^{k-1}p\nu_i}
    {\alpha_n (X_1,\ldots,X_n)}\prod_{l=1}^{n}f_0(X_l),    && k\geq n+1.\\
    \end{aligned}
  \right.
\end{align*}
Observe that for $k\geq n+1$,
\begin{align*}
  \P\{\theta=k,\mu=i\,|\,\F_n\}
  &=\frac{\alpha_n^{(0)}(X_1,\ldots,X_n)}
  {\alpha_n(X_1,\ldots,X_n)}\,(1-p)^{k-n-1}p\nu_i
=\Pi_{n}^{(0)}\,(1-p)^{k-n-1}p\nu_i.
\end{align*}
In particular, $\P\{\theta=n+1,\mu=i\,|\,\F_n\}
=\Pi_{n}^{(0)}\,p\nu_i$, and $\P\{\theta\leq n+1,\mu=i\,|\,\F_n\}$
equals
\begin{align*}
  \P\{\theta\leq n,\mu=i\,|\,\F_n\} + \P\{\theta=n+1,\mu=i\,|\,\F_n\}
  =\Pi_{n}^{(i)}+\Pi_{n}^{(0)}\,p\nu_i.
\end{align*}
Note also that $\P\{\theta>n+1\,|\,\F_n\}$ equals
\begin{align*}
  \sum_{k=n+2}^{\infty}\sum_{i=1}^{M}\Pi_{n}^{(0)}\,(1-p)^{k-n-1}p\nu_i
  =\Pi_{n}^{(0)}p\sum_{k=n+2}^{\infty}\,(1-p)^{k-n-1}
  =\Pi_{n}^{(0)}(1-p).
\end{align*}
Thus, $\E[\Pi_{n+1}^{(0)}\,|\,\F_n] =\P\{\theta>n+1\,|\,\F_n\}
  =\Pi_{n}^{(0)}(1-p) <\Pi_{n}^{(0)},$ and
\begin{align*}
  \E[\Pi_{n+1}^{(i)}\,|\,\F_n] &=\P\{\theta\leq n+1,\mu=i\,|\,\F_n\}
  =\Pi_{n}^{(i)}+\Pi_{n}^{(0)}\,p\nu_i >\Pi_{n}^{(i)},\quad i\in \Mh.
\end{align*}
Hence, $\{\Pi_n^{(0)},\F_n\}_{n\geq 0}$ is supermartingale, and
$\{\Pi_n^{(i)},\F_n\}_{n\geq 0}$, $i \in \Mh$ are submartingales.

For the proof of Part (c), note first that
\begin{align}
  \label{eq:unnormalized-probabilities}
  \alpha^{(i)}_{n+1} (x_1,\ldots,x_{n+1}) = \left\{
    \begin{aligned}
      & \left[\alpha^{(i)}_n(x_1,\ldots,x_n) + p \nu_i \alpha^{(0)}_n
        (x_1,\ldots,x_n) \right] f_i(x_{n+1}), && i \in \M,\\
      & (1-p) \alpha^{(0)}_n(x_1,\ldots,x_n) f_0(x_{n+1}), && i=0.
    \end{aligned}
  \right.
\end{align}
Substituting these expressions after writing $\Pi^{(i)}_{n+1}$, $i\in
\{0\} \cup \M$ by using (\ref{eq:pi}), and then dividing both
numerator and denominator by $ \alpha_n(X_1,\ldots,X_n)$ give
(\ref{E:Pi-Dynamics}).

Next, we find the conditional distribution of $X_{n+1}$ given $\F_n$
for $n\ge 0$. If $g:E\mapsto \R_+$ is a nonnegative function and
$A=\{(X_1,\ldots,X_n) \in B\} \in \F_n$, then $\int_A \E
[g(X_{n+1})\mid \F_n] d\P$ equals
\begin{multline*}
  \int_A g(X_{n+1}) d\P =\int_{B \times E} g(x_{n+1}) \alpha_{n+1}
  (x_1,\ldots,x_{n+1})\,  m(dx_1)\cdots m(dx_{n+1}) \\
 \begin{aligned}
   &=\int_{B} \left[\int_E g(x_{n+1}) \frac{\alpha_{n+1}
       (x_1,\ldots,x_{n+1})}{\alpha_{n} (x_1,\ldots,x_{n})}
     m(dx_{n+1})\right] \alpha_{n} (x_1,\ldots,x_{n})\, m(dx_1)\cdots
   m(dx_{n}) \\
   &=\int_{A} \left[\int_E g(x_{n+1}) \frac{\alpha_{n+1}
       (X_1,\ldots,X_n, x_{n+1})}{\alpha_{n} (X_1,\ldots,X_{n})}
     m(dx_{n+1})\right] d\P.
 \end{aligned}
\end{multline*}
Therefore, we have
\begin{align}
\label{eq:cond-distribution-of-X-n+1}
\begin{aligned}
  \E [g(X_{n+1}) \mid \F_n] = \int_E g(x) \frac{\alpha_{n+1}
    (X_1,\ldots,X_n, x)}{\alpha_{n} (X_1,\ldots,X_{n})} \, m(dx) =
  \int_E g(x) D(\Pi_n, x) m(dx),
\end{aligned}
\end{align}
where the second equality follows from (\ref{eq:pi}) after
substituting (\ref{eq:unnormalized-probabilities}) into previous
equality, and the mapping $D$ was defined by (\ref{eq:D-mappings}).
Then for every nonnegative function $f: S^M \mapsto \R_+$,
(\ref{E:Pi-Dynamics}) and (\ref{eq:cond-distribution-of-X-n+1}) imply
that
\begin{align*}
  \E[f(\Pi_{n+1})\mid \F_n] = \E\left[\left. f\left(\frac{D_0(\Pi_n,
          X_{n+1})}{D(\Pi_n, X_{n+1})},\ldots,\frac{D_M(\Pi_n,
          X_{n+1})}{D(\Pi_n, X_{n+1})}\right)\right| \F_n \right] =
  (\T f)(\Pi_n)
\end{align*}
in terms of the operator $\T$ defined by (\ref{E:T-operator}), and
$\E[f(\bPi_{n+1})|\F_n] = \E[f(\bPi_{n+1})\mid \bPi_n]$.
Therefore, the process $\{\Pi_n, \F_n;\; n\ge 0\}$ is Markov, and
the proof of part (c) is completed. \qed

\subsection{Proofs of Lemmas \ref{L:Vn-equal-expected-gamma} and
  \ref{L:backward-induction-eqns}}

Before proving the lemmas, we state Definition
\ref{D:directed-upwards}, Theorem \ref{T:directed-upwards}, and Lemma
\ref{L:directed-upwards} from Chow et al.\ \cite[pp.\
62-69]{MR0331675} for ease of reference.

\begin{definition}\label{D:directed-upwards}
  A collection $(\xi_t)_{t\in T}$ of random variables is called
  \emph{directed-upwards} if, for every $u, v \in T$, there exists
  $t\in T$ such that $\xi_t \geq \xi_u \vee \xi_v$.
\end{definition}

\begin{theorem}\label{T:directed-upwards}
  If a collection $(\xi_t)_{t\in T}$ of random variables is
  directed-upwards, then for every $t_0 \in T$, there exists a
  non-decreasing sequence $(\xi_{t_n})_{n\ge 0}$ in the
  collection $(\xi_t)_{t\in T}$ such that
  \begin{align*}
    \mathop{\emph{ess sup}}_{t\in T}\,\xi_t =
    \lim_{n\rightarrow\infty} \uparrow \xi_{t_n} \geq \xi_{t_0}\quad
    \text{almost surely}.
  \end{align*}
\end{theorem}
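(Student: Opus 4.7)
The plan is to reduce to a problem about bounded real-valued expectations via a strictly increasing transformation, then exploit the directed-upwards property to extract a non-decreasing chain whose limit is forced to dominate every element of the family. To that end, I would fix a continuous strictly increasing function $\phi:\R\to(0,1)$, for instance $\phi(x)=\tfrac{1}{2}+\tfrac{1}{\pi}\arctan x$, extended by $\phi(\pm\infty)=0,1$. Since $\phi(\xi_t)\in(0,1)$ is uniformly bounded, the quantity $\beta\triangleq\sup_{t\in T}\E[\phi(\xi_t)]$ is a well-defined real number in $[0,1]$, and one can pick a sequence $(s_k)_{k\ge1}\subseteq T$ with $\E[\phi(\xi_{s_k})]\to\beta$.

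Next, I would construct the desired non-decreasing sequence by iterating the directed-upwards property. Starting from the given $t_0$, at step $n\ge1$ choose $t_n\in T$ so that $\xi_{t_n}\ge \xi_{t_{n-1}}\vee \xi_{s_n}$; such a $t_n$ exists by two applications of Definition~\ref{D:directed-upwards}. By construction $(\xi_{t_n})_{n\ge0}$ is non-decreasing and dominates $\xi_{t_0}$, and $\xi_{t_n}\ge\xi_{s_n}$ gives $\beta\ge\E[\phi(\xi_{t_n})]\ge\E[\phi(\xi_{s_n})]\to\beta$. Define $\xi_\infty\triangleq\lim_{n\to\infty}\uparrow\xi_{t_n}$, a well-defined random variable (possibly $\pm\infty$-valued) as the pointwise monotone limit of measurable functions. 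Monotone (or dominated) convergence applied to $\phi(\xi_{t_n})\in(0,1)$ then yields $\E[\phi(\xi_\infty)]=\beta$.

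Finally, I would show $\xi_\infty=\esup_{t\in T}\xi_t$ almost surely. Since each $\xi_{t_n}$ lies in the family, $\xi_\infty\le\esup_{t\in T}\xi_t$ a.s.\ is automatic. For the reverse inequality, fix an arbitrary $t\in T$ and, by directed-upwardness, pick $r_n\in T$ with $\xi_{r_n}\ge\xi_{t_n}\vee\xi_t$. The sandwich
\begin{align*}
\beta\;\ge\;\E[\phi(\xi_{r_n})]\;\ge\;\E[\phi(\xi_{t_n}\vee\xi_t)]\;\ge\;\E[\phi(\xi_{t_n})]\;\to\;\beta
\end{align*}
forces $\E[\phi(\xi_{t_n}\vee\xi_t)]\to\beta$, and a second application of monotone convergence (along the non-decreasing sequence $\xi_{t_n}\vee\xi_t\uparrow\xi_\infty\vee\xi_t$) gives $\E[\phi(\xi_\infty\vee\xi_t)]=\beta=\E[\phi(\xi_\infty)]$. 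Because $\phi$ is strictly increasing and $\xi_\infty\vee\xi_t\ge\xi_\infty$, equality of expectations forces $\xi_\infty\vee\xi_t=\xi_\infty$ a.s., i.e., $\xi_t\le\xi_\infty$ a.s., as required.

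The main obstacle I anticipate is the passage from an equality of expectations to an almost-sure pointwise inequality in the last step: this relies critically on $\phi$ being \emph{strictly} increasing so that $\phi(a\vee b)=\phi(a)$ implies $b\le a$. One also has to be mindful that the null set arising from "$\xi_t\le\xi_\infty$ a.s." depends on $t$, but this is harmless because the essential supremum is defined precisely to absorb such $t$-dependent null sets, and the existence of $\xi_\infty$ as an a.s.\ upper bound for every element of the family (combined with its membership in the limit of the family) is all that is needed to identify it with $\esup_{t\in T}\xi_t$.
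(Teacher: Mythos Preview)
Your argument is correct, but it follows a genuinely different route from the paper's proof. The paper simply invokes the standard structural fact about essential suprema: there exists a countable subset $S=\{s_1,s_2,\ldots\}\subseteq T$ with $\esup_{t\in T}\xi_t=\sup_{t\in S}\xi_t$ almost surely. Given this, the directed-upwards property immediately supplies $t_n$ with $\xi_{t_n}\ge\xi_{t_{n-1}}\vee\xi_{s_n}$, and the monotone limit of $(\xi_{t_n})$ is squeezed between $\sup_{t\in S}\xi_t$ and $\esup_{t\in T}\xi_t$, which coincide. Your approach, by contrast, avoids appealing to the countable-subfamily characterization of $\esup$ and instead identifies the limit $\xi_\infty$ directly: the bounded strictly increasing transformation $\phi$ lets you select an approximately maximizing sequence via real-valued expectations, and the sandwich argument with $\E[\phi(\xi_\infty\vee\xi_t)]=\E[\phi(\xi_\infty)]$ forces $\xi_t\le\xi_\infty$ a.s.\ for every $t$. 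In effect you are \emph{re-proving} the essential-supremum construction inside the argument, which makes your proof more self-contained but longer; the paper's version is shorter precisely because it offloads that work to the cited result from Chow, Robbins, and Siegmund. One very minor point: a single application of the directed-upwards property already produces $t_n$ with $\xi_{t_n}\ge\xi_{t_{n-1}}\vee\xi_{s_n}$, so ``two applications'' is unnecessary.
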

\begin{long-version}
\begin{proof}[Proof of Theorem \ref{T:directed-upwards}]
  Fix $t_0\in T$. Let $S$ be a countable subset of $T$ for which
  $\esup_{t\in T}\xi_t=\sup_{t\in S}\xi_t$ and enumerate its elements
  as $S=\{s_1,s_2,\ldots\}$.  Since $(\xi_t)_{t\in T}$ is
  directed-upwards, for every $n\geq 1$ there exists an index $t_n\in
  T$ such that $\xi_{t_n} \geq \xi_{t_{n-1}} \vee \xi_{s_n}$.  Thus,
  $(\xi_{t_n})_{n=0,1,\ldots}$ is a non-decreasing sequence and each
  element $\xi_{t_n}\geq\xi_{t_0}$.  By construction, $\sup_{t\in
    S}\xi_t\leq\sup_{n=0,1,\ldots}\xi_{t_n}$.  But since $\esup_{t\in
    T}\xi_t\geq\sup_{n=0,1,\ldots}\xi_{t_n}$, we have the equalities:
  $\esup_{t\in T}\xi_t = \sup_{n=0,1,\ldots}\xi_{t_n} =
  \lim_{n\rightarrow\infty}\xi_{t_n}$.
\end{proof}
\end{long-version}

\begin{lemma}\label{L:directed-upwards}
  For every $n\ge 0$. the collection
  $\{\E[Y_{\tau}\,|\,\F_n]\mid\tau\in C_n\}$ is directed-upwards.
\end{lemma}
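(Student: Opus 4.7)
The plan is to use the standard ``splicing on a measurable set'' construction. Given two stopping times $\tau_1, \tau_2 \in C_n$, I will define the event
\begin{align*}
  A \triangleq \{\E[Y_{\tau_1}\,|\,\F_n] \ge \E[Y_{\tau_2}\,|\,\F_n]\} \in \F_n
\end{align*}
and set $\tau_3 \triangleq \tau_1 \I_A + \tau_2 \I_{A^c}$. Both summands are defined pointwise. The goal is then to show that $\tau_3 \in C_n$ and that $\E[Y_{\tau_3}\,|\,\F_n]$ equals the pointwise maximum of the two conditional expectations.

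First I would verify that $\tau_3$ is an $\Fb$-stopping time. Since $\tau_1, \tau_2 \ge n$ by membership in $C_n$, also $\tau_3 \ge n$, so $\{\tau_3 \le k\} = \varnothing$ when $k < n$. For $k \ge n$, I can write $\{\tau_3 \le k\} = (A \cap \{\tau_1 \le k\}) \cup (A^c \cap \{\tau_2 \le k\})$, and since $A \in \F_n \subseteq \F_k$ and $\{\tau_i \le k\} \in \F_k$, this event lies in $\F_k$. Finiteness is immediate from $\tau_1, \tau_2 < \infty$ a.s., and the integrability condition $\E Y_{\tau_3}^- < \infty$ follows from the pointwise identity $Y_{\tau_3}^- \le Y_{\tau_1}^- + Y_{\tau_2}^-$ together with $\tau_1, \tau_2 \in C$. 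Hence $\tau_3 \in C_n$.

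Next, since $A \in \F_n$, I can pull the indicators out of the conditional expectation to obtain
\begin{align*}
  \E[Y_{\tau_3}\,|\,\F_n] &= \I_A \, \E[Y_{\tau_1}\,|\,\F_n] + \I_{A^c}\, \E[Y_{\tau_2}\,|\,\F_n] \\
  &= \E[Y_{\tau_1}\,|\,\F_n] \vee \E[Y_{\tau_2}\,|\,\F_n] \quad \text{a.s.},
\end{align*}
which is exactly the directed-upwards property of Definition \ref{D:directed-upwards}. The main item to be careful about is not any deep fact but the bookkeeping in checking that $\tau_3$ belongs to $C_n$, in particular that the indicator decomposition is compatible with the definition $C_n = \{\tau \vee n : \tau \in C\}$ (which is automatic here because $\tau_3 \ge n$ already, so $\tau_3 = \tau_3 \vee n$, and $\tau_3$ itself qualifies as an element of $C$).
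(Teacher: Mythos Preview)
Your proof is correct and follows essentially the same splicing construction as the paper: define $A=\{\E[Y_{\tau_1}\mid\F_n]\ge\E[Y_{\tau_2}\mid\F_n]\}\in\F_n$, set $\tau=\tau_1$ on $A$ and $\tau=\tau_2$ on $A^c$, and pull the indicators through the conditional expectation. If anything, you are more careful than the paper, which states the construction and the conditional-expectation computation but does not spell out the verification that the spliced time belongs to $C_n$.
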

\begin{long-version}
\begin{proof}[Proof of Lemma \ref{L:directed-upwards}]
  Fix $n$. Take $\tau_1,\tau_2\in C_n$, and define
  \begin{align*}
    \tau \triangleq \left\{
      \begin{aligned}
        &\tau_1, &&\text{on } A \triangleq\{\E[Y_{\tau_1}\,|\,\F_n]\geq\E[Y_{\tau_2}\,|\,\F_n]\}\\
        &\tau_2, &&\text{on } A^c
      \end{aligned}
    \right\}.
  \end{align*}
  Then, since $A\in\F_n$, we have $\E[Y_{\tau}\,|\,\F_n] =
  \E[Y_{\tau_1}\I_{A}+Y_{\tau_2}\I_{A^c}\,|\,\F_n] =
  \I_{A}\,\E[Y_{\tau_1}\,|\,\F_n]+\I_{A^c}\,\E[Y_{\tau_2}\,|\,\F_n] =
  \max\{\E[Y_{\tau_1}\,|\,\F_n],\E[Y_{\tau_2}\,|\,\F_n]\}$.
\end{proof}
\end{long-version}

\begin{proof}[Proof of Lemma \ref{L:Vn-equal-expected-gamma}]
  To prove the lemma, we establish two inequalities.  Note that
  $\gamma_n \geq \E[Y_{\tau}\,|\,\F_n]$ for all $\tau\in C_n$ by
  definition.  So, taking expectations, we obtain $\E\gamma_n \geq
  \sup_{\tau\in C_n}\E Y_{\tau} = -V_n$.  For the reverse direction,
  by Theorem~\ref{T:directed-upwards} and
  Lemma~\ref{L:directed-upwards} there exists a sequence of stopping
  times $(\tau_k)_{k\geq 1}\subset C_{n}$ such that $Y_{n} \leq
  \E[Y_{\tau_k}\,|\,\F_{n}] \uparrow \gamma_{n}$ as
  $k\rightarrow\infty$.  So, by the monotone convergence theorem, we
  have $\E\gamma_n = \E\left[\lim_{k\rightarrow\infty}
    \E[Y_{\tau_k}\,|\,\F_{n}]\right] = \lim_{k\rightarrow\infty} \E
  Y_{\tau_k} \leq -V_n$.  Proof of the equations $-V_n^N =
  \E\gamma_n^N$, $0\leq n\leq N$ is similar.
\end{proof}

\begin{proof}[Proof of Lemma \ref{L:backward-induction-eqns}]
  We have $\gamma_n \leq \max\{Y_{n},\E[\gamma_{n+1}\,|\,\F_n]\}$,
  because for every fixed $\tau\in C_n$, the expectation
  $\E[Y_{\tau}\,|\,\F_n]$ equals
  \begin{multline*}
    \E[Y_{\tau}\II{\tau=n}+Y_{\tau\vee(n+1)}\II{\tau\geq
      n+1}\,|\,\F_n]
     = Y_{n}\II{\tau=n}+\II{\tau\geq
       n+1}\E[\,\E[Y_{\tau\vee(n+1)}\,|\,\F_{n+1}]\,|\,\F_n]\\
    \leq Y_{n}\II{\tau=n}+\II{\tau\geq n+1}\E[\gamma_{n+1}\,|\,\F_n]
    \leq \max\{Y_{n},\E[\gamma_{n+1}\,|\,\F_n]\}.
  \end{multline*}

  For the reverse direction, note that $\gamma_n \geq Y_n =
  \E[Y_{n}\,|\,\F_n]$ by definition.  Since $\gamma_{n+1} =
  \esup_{\tau\in C_{n+1}}\E[Y_{\tau}\,|\,\F_{n+1}]$, by
  Theorem~\ref{T:directed-upwards} and Lemma~\ref{L:directed-upwards}
  there exists a sequence of stopping times $(\tau_k)_{k\geq 1}\subset
  C_{n+1}$ such that $Y_{n+1} \leq \E[Y_{\tau_k}\,|\,\F_{n+1}]
  \uparrow \gamma_{n+1}$ as $k\rightarrow\infty.$
  Since $C_{n+1}\subset C_{n}$ for all $n\geq 0$, we have $\gamma_{n}
  \geq \E[Y_{\tau_k}\,|\,\F_{n}] =
  \E[\,\E[Y_{\tau_k}\,|\,\F_{n+1}]\,|\,\F_{n}]$
  for all $k\geq 1$. Taking the limit as $k\rightarrow\infty$ and
  applying the monotone convergence theorem, we have
  $\gamma_n\geq\E[\gamma_{n+1}\,|\,\F_{n}]$.  Therefore,
  $\gamma_n\geq\max\{Y_{n},\E[\gamma_{n+1}\,|\,F_n]\}$.  By a similar
  argument, we can establish the other equations of
  Lemma~\ref{L:backward-induction-eqns}.
\end{proof}

\subsection{Proof of Lemma \ref{L:gamma-equals-gamma-prime}}
Because $(C^N_n)_{N\ge n}$ is increasing for every $n\ge 0$, the
sequence $(\gamma^N_n)_{N\ge n}$ is increasing for every $n\ge 0$ and
has a limit.  Set $\gamma'_n = \lim_{N\to \infty} \gamma^N_n$, $n\ge
0$. Because $\gamma^N_{n+1}\ge Y_{n+1}$ and $Y_{n+1}$ is integrable,
taking limits in $\gamma_n^N = \max\{Y_n,
\E[\gamma_{n+1}^N\,|\,\F_n]\}$, see Lemma
\ref{L:backward-induction-eqns}, and monotone convergence give
$\gamma'_n = \max\{Y_n, \E [\gamma'_{n+1} \mid \F_n]\}$ for every
$n\ge 0$.  Particularly, $(\gamma'_n)_{n\ge 0}$ is an
$\Fb$-supermartingale.

Obviously, $\gamma'_n \le \gamma_n$ for every $n\ge 0$. To prove
the reverse inequality, it is enough to show that $\gamma'_n \ge
\E[Y_{\tau}\mid \F_n]$ for every $\tau\in C_n$. Take any $\tau \in
C_n$. Then for every $F \in \F_n$ and $m\ge n$
\begin{multline*}
  \int_F \gamma'_n d\P = \int_{F\cap \{\tau = n\}} \gamma'_{\tau} d\P
  + \int_{F\cap \{\tau >n\}} \gamma'_n d\P \ge \int_{F\cap \{\tau =
    n\}} \gamma'_{\tau} d\P + \int_{F\cap \{\tau >n\}} \gamma'_{n+1}
  d\P \\
  \begin{aligned}
    & = \int_{F\cap \{n\le \tau \le n+1\}} \gamma'_{\tau} d\P +
    \int_{F\cap \{\tau >n+1\}} \gamma'_{n+1} d\P \ge \cdots \ge
    \int_{F\cap \{n\le \tau \le m\}} \gamma'_{\tau} d\P + \int_{F\cap
      \{\tau >m\}} \gamma'_{m} d\P,
  \end{aligned}
\end{multline*}
where the inequalities follow from $\Fb$-supermartingale property of
the process $(\gamma'_n)_{n \ge 0}$.  Because $\gamma'_k \ge Y_k$ for
every $k\ge 0$, we have $\gamma'_{\tau} \ge Y_{\tau}$, and for every
$m\ge n$
\begin{align*}
  \int_F \gamma'_n d\P \ge \int_{F\cap \{n\le \tau \le m\}} Y_{\tau}
  d\P + \int_{F\cap \{\tau >m\}} \gamma'_{m} d\P \ge \int_{F\cap
    \{n\le \tau \le m\}} Y_{\tau} d\P - \int_{F\cap \{\tau >m\}}
  (\gamma'_{m})^- d\P.
\end{align*}
Since $Y_{\tau}=-Y^-_{\tau}$ is integrable and $\tau<\infty$ a.s., we
have $\lim_{m\to \infty} \int_{F\cap \{n\le \tau \le m\}} Y_{\tau} d\P
= \int_F Y_{\tau} d\P$ by dominated convergence, and the proof will be
completed if $\varliminf_{m\rightarrow\infty}
\int_{\{\tau>m\}}(\gamma'_m)^- d\P = 0$. However, since
$\gamma'_m \ge Y_m$, we have $(\gamma'_m)^-\le Y^-_m$, and
$\int_{\{\tau>m\}}(\gamma'_m)^- d\P$ is less than or equal to
\begin{align*}
  \int_{\{\tau>m\}}Y_m^- d\P
  \leq \int_{\{\tau>m\}}m d\P + \|h\|\, \P\{\tau>n\} \leq
  \E\tau\II{\tau>n}+ \|h\|\, \P\{\tau>m\},
\end{align*}
where $\|h\|\triangleq \sup_{\bpi\in S^M}|h(\bpi)|$.  Since $h(\cdot)$
is bounded, $\E\tau <\infty$ and $\P\{\tau<\infty\}=1$, the right hand
side of the last inequality converges to zero as $n\rightarrow\infty$.
\qed

\subsection{Proof of Lemma \ref{L:pg-36}}

Fix any $N\ge 1$.  The equality holds trivially for $n=N$.  On the one
hand, the definition of the random variable
$\gamma^N_N$ in (\ref{eq:snell-envelopes}) implies that
\begin{align*}
  \gamma^N_N = \esup_{\tau\in C^N_N} \E[Y_{\tau} \mid \F_{N}] =
  \E[Y_{N} \mid \F_{N}] = Y_{N}
\end{align*}
because $C^N_N \equiv \{N\}$.  On the other hand, by the definition of
the operator $\M$ in (\ref{eq:operator-M}) we have $\M^0 h \equiv h$,
and
\begin{align*}
  -c\sum_{k=0}^{N-1}(1-\Pi_k^{(0)})-(\M^{N-N}h)(\Pi_n) =
  -c\sum_{k=0}^{N-1}(1-\Pi_k^{(0)})-h (\Pi_n) \equiv Y_N;
\end{align*}
thanks to (\ref{E:Yn}). Therefore,
(\ref{eq:markov-property-of-snell-envelope}) holds for $n=N$.
Now suppose that (\ref{eq:markov-property-of-snell-envelope}) is true
for some $n\geq 1$.  Then $\gamma_{n-1}^N =
\max\{Y_{n-1},\E[\gamma_n^N\,|\,\F_{n-1}]\}$ equals
\begin{multline*}
  \max\left\{-c\sum_{k=0}^{n-2}(1-\Pi_k^{(0)})-h(\bPi_{n-1}),\right.
  \left.\E\left[-c\sum_{k=0}^{n-1}(1-\Pi_k^{(0)})-(\M^{N-n}h)(\bPi_{n})\biggm|\F_{n-1}\right]\right\}\\
  \begin{aligned}
    &= -c\sum_{k=0}^{n-2}(1-\Pi_k^{(0)})
    -\min\left\{h(\bPi_{n-1}),c(1-\Pi_{n-1}^{(0)})+(\T(\M^{N-n}h))(\bPi_{n-1})\right\}\\
    &= -c\sum_{k=0}^{n-2}(1-\Pi_k^{(0)}) -(\M(\M^{N-n}h))(\bPi_{n-1})
    = -c\sum_{k=0}^{n-2}(1-\Pi_k^{(0)}) -(\M^{N-(n-1)}h)(\bPi_{n-1}).
  \end{aligned}
\end{multline*}
By induction, the equality holds for all $0\leq n \leq N$.
\qed

\subsection{Proof of Lemma \ref{L:pg-38}}
Applying Lemma~\ref{L:pg-36} for $n=0$ yields part (a) since
\begin{align}
  V_0^N = -\E \gamma_0^N = -\gamma_0^N = (\M^N h)(\bPi_0),\quad N\geq
  0.\label{E:V0N-MNh}
\end{align}
By Lemma~\ref{L:gamma-equals-gamma-prime}, $\gamma_n =
\lim_{N\rightarrow\infty}\gamma_n^N$, and so $V_n =
\lim_{N\rightarrow\infty}V_n^N$ by
Lemma~\ref{L:Vn-equal-expected-gamma} and the dominated convergence.
Since the left-hand side of \eqref{E:V0N-MNh} converges to $V_0$ as
$N\rightarrow\infty$, the limit of the right-hand side as
$N\rightarrow\infty$ exists and $V_0=\lim_{N\rightarrow\infty}(\M^N
h)(\bPi_0)$, which proves part (b).  \qed

\subsection{Proof of Lemma \ref{L:V-concave}}

Given $\bpi,\bpi'\in S^M$, $\lambda\in [0,1]$, and $\lambda'
\triangleq 1-\lambda$, we have
    \begin{multline*}
      \lambda(\T g)(\bpi)+\lambda'(\T g)(\bpi') = \lambda\int_{E}
      m(dx)\,D(\bpi,x)
      g\left(\frac{D_0(\bpi,x)}{D(\bpi,x)},\ldots,\frac{D_M(\bpi,x)}{D(\bpi,x)}\right)\\
     \begin{aligned}
       &\quad\quad+\lambda'\int_{E} m(dx)\,D(\bpi',x)
       g\left(\frac{D_0(\bpi',x)}{D(\bpi',x)},\ldots,\frac{D_M(\bpi',x)}{D(\bpi',x)}\right)\\
       &= \int_{E} m(dx)\,[\lambda D(\bpi,x)+\lambda'D(\bpi',x)]
       \left\{ \frac{\lambda D(\bpi,x)}{\lambda
           D(\bpi,x)+\lambda'D(\bpi',x)}
         g\left(\frac{D_0(\bpi,x)}{D(\bpi,x)},\ldots,\frac{D_M(\bpi,x)}{D(\bpi,x)}\right)
       \right.\\
       &\quad\quad\left.+\frac{\lambda'D(\bpi',x)}{\lambda
           D(\bpi,x)+\lambda'D(\bpi',x)}
         g\left(\frac{D_0(\bpi',x)}{D(\bpi',x)},\ldots,\frac{D_M(\bpi',x)}{D(\bpi',x)}\right)
       \right\}
    \end{aligned}
   \end{multline*}
Now, by the concavity of $g(\cdot)$ and the fact that
    \begin{align*}
        \frac{\lambda D(\bpi,x)}{\lambda D(\bpi,x)+\lambda'D(\bpi',x)}
        +\frac{\lambda'D(\bpi',x)}{\lambda
        D(\bpi,x)+\lambda'D(\bpi',x)} &= 1
    \end{align*}
is a convex combination, we continue the chain of inequalities to
obtain
\begin{multline*}
  \lambda(\T g)(\bpi)+\lambda'(\T g)(\bpi')
  \leq\int_{E} m(dx)\,[\lambda D(\bpi,x)+\lambda'D(\bpi',x)]\\
  \begin{aligned}
    &\quad\times g\left(\frac{\lambda
        D_0(\bpi,x)+\lambda'D_0(\bpi',x)} {\lambda
        D(\bpi,x)+\lambda'D(\bpi',x)}, \ldots, \frac{\lambda
        D_M(\bpi,x)+\lambda'D_M(\bpi',x)}
      {\lambda D(\bpi,x)+\lambda'D(\bpi',x)}\right)\\
    &=\int_{E} m(dx)\,[ D(\lambda\bpi+\lambda'\bpi',x)]
    \;g\left(\frac{D_0(\lambda\bpi+\lambda'\bpi',x)}
      {D(\lambda\bpi+\lambda'\bpi',x)}, \ldots,
      \frac{D_M(\lambda\bpi+\lambda'\bpi',x)}
      {D(\lambda\bpi+\lambda'\bpi',x)}\right)\\
    &= (\T g)(\lambda\bpi +\lambda'\bpi').
  \end{aligned}
\end{multline*}
Note that the second to last equality follows from the fact that
each of $D_0, \ldots, D_M, D$ is linear in its first argument. So,
we have established that $\T g$ is concave. \qed

\subsection{Proof of Proposition \ref{P:V-concave}}

Since $h(\bpi) = \min_{j\in \Mh} \sum_{i=0}^{M} \pi_i a_{ij}$ is
concave, and since the pointwise minimum of two concave functions
is concave, by Lemma \ref{L:V-concave} the function $(\M f)(\bpi)
= \min\{h(\bpi),c(1-\pi_0)+(\T f)(\bpi)\}$ is concave for every
bounded concave $f:S^M \mapsto \R$.  Therefore, $\M h,
\M^2h,\ldots$ are concave, and $V_0^0, V_0^1, \ldots$ are concave
by Lemma \ref{L:pg-38}(a). This proves part (a). For part (b),
note that Lemma \ref{L:pg-38}(b) implies that $V_0(\bpi) =
\lim_{N\rightarrow\infty} (\M^Nh)(\bpi)$ for every $\bpi\in S^M$;
thus, $V_0(\cdot)$ is concave on $S^M$.  \qed

\subsection{Proof of Proposition \ref{P:V-convergence-rate}}

The inequality $-V_0(\bpi) \geq -V_0^N(\bpi)$ for every $\bpi\in S^M$
and $N\ge 1$ is obvious.  Let us prove the second.  Fix $N\ge 1$,
$\bpi\in S^M$, and any $\varepsilon>0$.  Since
\begin{align*}
  0 \geq -V_0(\bpi) = \sup_{\tau\in C_0}\E_{\bpi}Y_{\tau} \geq
  \E_{\bpi}Y_0 \geq -\|h\| > -\infty
\end{align*}
is finite, there exists some stopping time $\tau_\varepsilon \in C_0$
such that
\begin{align}
  -V_0(\bpi)-\varepsilon < \E_{\bpi} Y_{\tau_\varepsilon}
  =\E_{\bpi}\left[-c\sum_{k=0}^{\tau_\varepsilon -
      1}(1-\Pi_k^{(0)})-h(\bPi_{\tau_\varepsilon})\right].\label{E:star}
\end{align}
Observe that $\tau_\varepsilon \wedge N \in C_0^N$ and
\begin{short-version}
    \begin{align}
        -V_0^N(\bpi)
            \geq \E_{\bpi} Y_{\tau_\varepsilon \wedge N}
            &\geq\E_{\bpi}\left[-c\sum_{k=0}^{\tau_\varepsilon -1}
                (1-\Pi_k^{(0)})-h(\bPi_{\tau_\varepsilon})\right]
                -\|h\|\,\P_{\bpi}\{\tau_\varepsilon\geq N\} \notag\\
            &\geq -V_0(\bpi)-\varepsilon -
                \frac{\|h\|}{N}\E_{\bpi}\tau_\varepsilon.\label{E:pound}
    \end{align}
\end{short-version}
\begin{long-version}
    \begin{align}
      -V_0^N(\bpi)
      &\geq \E_{\bpi} Y_{\tau_\varepsilon \wedge N}\notag\\
      &=\E_{\bpi}\left[-c\sum_{k=0}^{(\tau_\varepsilon \wedge N) -1}
        (1-\Pi_k^{(0)})-h(\bPi_{\tau_\varepsilon\wedge N})\right]\notag\\
      &\geq\E_{\bpi}\left[-c\sum_{k=0}^{\tau_\varepsilon -1}
        (1-\Pi_k^{(0)})-h(\bPi_{\tau_\varepsilon\wedge N})\right]\notag\\
      &=\E_{\bpi}\left[-c\sum_{k=0}^{\tau_\varepsilon -1}
        (1-\Pi_k^{(0)})-h(\bPi_{\tau_\varepsilon})\II{\tau_\varepsilon<N}
        -h(\bPi_{N})\II{\tau_\varepsilon\geq N} \right]\notag\\
      &\geq\E_{\bpi}\left[-c\sum_{k=0}^{\tau_\varepsilon -1}
        (1-\Pi_k^{(0)})-h(\bPi_{\tau_\varepsilon})\right]
      -\|h\|\,\P_{\bpi}\{\tau_\varepsilon\geq N\} \notag\\
      &\geq -V_0(\bpi)-\varepsilon -
      \frac{\|h\|}{N}\E_{\bpi}\tau_\varepsilon. \label{E:pound}
    \end{align}
\end{long-version}
The last inequality follows by the Markov inequality applied to
$\P_{\bpi}\{\tau_\varepsilon\geq N\}$ and since $\tau_\varepsilon$ is
$\varepsilon$-optimal for $V_0$. Next, we will bound
$\E_{\bpi}\tau_\varepsilon$ from above by using \eqref{E:star}:
    \begin{multline*}
        -\varepsilon-V_0(\bpi)
            < \E_{\bpi}\left[-c\sum_{k=0}^{\tau_\varepsilon -1}(1-\Pi_k^{(0)})
                -h(\bPi_{\tau_\varepsilon})\right]
            \leq\E_{\bpi}\left[-c\sum_{k=0}^{\tau_\varepsilon
            -1}(1-\Pi_k^{(0)})\right]\\
            = -c\E_{\bpi}\tau_\varepsilon +
                c\E_{\bpi}\sum_{k=0}^{\tau_\varepsilon -1}\Pi_k^{(0)}
            \leq -c\E_{\bpi}\tau_\varepsilon+
                c\E_{\bpi}\sum_{k=0}^{\infty}\Pi_k^{(0)}
            =-c\E_{\bpi}\tau_\varepsilon+
                c\sum_{k=0}^{\infty}\E_{\bpi}\Pi_k^{(0)}.
    \end{multline*}
\begin{short-version}
  Rearrangement after using the inequality $\E_{\bpi}\Pi_k^{(0)} \leq
  (1-p)^k$ of Proposition~\ref{P:PiProperties}(a) gives
    \begin{align*}
        \E_{\bpi}\tau_\varepsilon \leq
        \frac{1}{c}\left[V_0(\bpi)+\varepsilon\right] +
        \frac{1}{p} \leq \frac{\|h\|+\varepsilon}{c}+\frac{1}{p}.
    \end{align*}
\end{short-version}
\begin{long-version}
By Proposition~\ref{P:PiProperties}(a), we have
$\E_{\bpi}\Pi_k^{(0)} \leq (1-p)^k$ and so
    \begin{align*}
        -\varepsilon-V_0(\bpi)
            &\leq -c\E_{\bpi}\tau_\varepsilon+
                c\sum_{k=0}^{\infty}(1-p)^k
            =-c\E_{\bpi}\tau_\varepsilon+\frac{c}{p},
    \end{align*}
which can be rearranged as
    \begin{align*}
        \E_{\bpi}\tau_\varepsilon \leq
        \frac{1}{c}\left[V_0(\bpi)+\varepsilon\right] +
        \frac{1}{p} \leq \frac{\|h\|+\varepsilon}{c}+\frac{1}{p}.
    \end{align*}
\end{long-version}
Now using this bound on $\E_{\bpi}\tau_\varepsilon$ in
\eqref{E:pound} we have
    \begin{align*}
        -V_0^N(\bpi) \geq -V_0(\bpi)-\varepsilon -
        \frac{\|h\|}{N}\left(\frac{\|h\|+\varepsilon}{c}+\frac{1}{p}\right).
    \end{align*}
However, $\varepsilon$ was arbitrary, so taking the limit as
$\varepsilon\downarrow 0$ we obtain the desired bound.
\qed

\subsection{Proof of Proposition \ref{P:V0N-continuous}}

Recall that $ V_0^0(\bpi) = (\M^0h)(\bpi) = h(\bpi) = \min_{j\in
  \Mh}$ $\sum_{i=0}^{M}\pi_i a_{ij},$
which is continuous in $\bpi\in S^M$.  Suppose that
$V_0^N:S^M\mapsto\R_+$ is continuous for some $N\geq 0$.  Then by
\eqref{E:V0N-MNh}
    \begin{multline}
        V_0^{N+1}(\bpi)
            = (\M^{N+1}h)(\bpi) 
            = (\M V_0^N)(\bpi)
            = \min\left\{h(\bpi),c(1-\pi_0)+(\T V_0^N)(\bpi)\right\},\label{E:pg-55}
    \end{multline}
where (see \eqref{E:T-operator})
    \begin{align}
      (\T V_0^N)(\bpi)&=\int_{E}m(dx)\,D(\bpi,x)V_0^N\left(\frac{D_0(\bpi,x)}{D(\bpi,x)},\ldots,\frac{D_M(\bpi,x)}{D(\bpi,x)}\right).\label{E:TV0N}
    \end{align}
Note that
\begin{itemize}
\item the mapping $\bpi\mapsto D(\bpi,x)$ is continuous for every
  $x\in E$,
\item for every $x\in E$ such that $D(\bpi,x)>0$ (these are the
  $x$-values that matter in the defining integral of $(\T V_0^N)(\bpi)$
  above), the coordinates,
  $\frac{D_0(\bpi,x)}{D(\bpi,x)},\ldots,\frac{D_M(\bpi,x)}{D(\bpi,x)}$,
  are continuous,
\item since $V_0^N(\cdot)$ is continuous on $S^M$ by the induction
  hypothesis, the integrand in \eqref{E:TV0N} is continuous in $\bpi$
  for every fixed $x\in E$ such that $D(\bpi,x)>0$,
\item since $0\leq V_0^N(\cdot) \leq \|h\|$, the same nonnegative
  integrand is bounded from above by the integrable function
  $2\,\|h\|\sum_{i=0}^{M} f_i(x)$
  for every $\bpi\in S^M$,
\item then the mapping $\bpi\mapsto (\T V_0^N)(\bpi)$ is continuous by
  dominated convergence,
\item and finally, since $h(\bpi)$ and $c(1-\pi_0)+(\T V_0^N)(\bpi)$ are
  continuous, \eqref{E:pg-55} implies that the mapping $\bpi\mapsto
  V_0^{N+1}(\bpi)$ is continuous.
\end{itemize}
Hence, continuity holds for every $N\ge 0$ by induction, and this
completes the proof.  \qed

\subsection{Proof of Corollary \ref{C:V-continuous}}
The function $V_0(\bpi)$ on the compact space $S^M$ is the limit of
the sequence $\{V_0^N(\bpi)\}_{N\geq 0}$ of continuous functions,
\emph{uniformly} in $\bpi\in S^M$ by Proposition
\ref{P:V-convergence-rate}.  Therefore, it is continuous.  \qed

\subsection{Proof of Theorem \ref{T:Gamma-decreasing-subsets}}

By Lemmas \ref{L:pg-36} and \ref{L:Vn-equal-expected-gamma} we have
that $(V_0^N)_{N\geq 0}$ is a non-increasing sequence of functions,
bounded from above by the function $h$.  Since $h(\cdot)$ and
$h_j(\cdot), j\in \Mh$ are continuous and since $V_0^N(\cdot), N\geq0$
are continuous on $S^M$ by Proposition~\ref{P:V0N-continuous}, the set
$\Gamma_N^{(j)}=\{\bpi\in S^M\,|\,V_0(\bpi)=h(\bpi)=h_j(\bpi)\}$ is a
closed subset of $S^M$ for each $N\geq 0$ and $j\in \Mh$.

Fix $j\in \Mh$.  Then $V_0^{N+1}(\bpi)=h(\bpi)=h_j(\bpi)$ implies
$V_0^N(\bpi)=h(\bpi)=h_j(\bpi)$; and therefore, $\Gamma_{N+1}^{(j)}
\subset \Gamma_N^{(j)}$ for every $N\geq 0$.  Hence,
$(\Gamma_N^{(j)})_{N\geq 0}$ is a non-increasing sequence of closed
subsets of $S^M$.  Clearly, $\Gamma_N =
\bigcup_{j=1}^{M}\Gamma_N^{(j)}$, $N\geq 0$ and $(\Gamma_N)_{N\geq 0}$
is also a non-increasing sequence of closed subsets of $S^M$.
Moreover, since $V_0^N\searrow V_0$ by
Proposition~\ref{P:V-convergence-rate}, the limit of the
non-increasing sequence $(\Gamma_N)_{N\geq 0}$ is $\Gamma$; i.e.,
$\bigcap_{N=1}^{\infty} \Gamma_N = \Gamma$.  Similarly,
$\bigcap_{N=1}^{\infty} \Gamma_N^{(j)} = \Gamma^{(j)}$, $j\in \Mh$.

Given $\bpi\in S^M$, if the inequality $h_j(\bpi) \leq
\min\{h(\bpi),c(1-\pi_0)\}$ holds, then $h_j(\bpi) \leq h(\bpi)$,
which implies that $h_j(\bpi) = h(\bpi)$.  Also,
    \begin{align*}
        h_j(\bpi) &\leq \min\{h(\bpi),c(1-\pi_0)+(\T V_0)(\bpi)\} =
        V_0(\bpi).
    \end{align*}
This follows from the fact that $V_0\geq 0$ implies $\T V_0\geq 0$
and from the optimality equation of
Proposition~\ref{P:Dyn-prog-eqn}.  But, since $V_0\leq h$ on
$S^M$, we have $V_0(\bpi)=h_j(\bpi)= h(\bpi)$ and thus $\bpi\in
\Gamma^{(j)}$.  As a corollary, since $h_j(\e_j)=0 \leq
\min\{h(\e_j),c\}$, we have $\e_j\in\Gamma^{(j)}$.

In order to prove the convexity of $\Gamma_N^{(j)}$, take
$\bpi,\bpi'\in\Gamma_N^{(j)}$ and show that $\lambda\bpi +
(1-\lambda)\bpi'\in\Gamma_N^{(j)}$ for every $\lambda\in [0,1]$.
Since $V_0^N(\cdot)$ is concave by Proposition~\ref{P:V-concave},
we have
    \begin{multline*}
        \lambda V_0^N(\bpi) + (1-\lambda)V_0^N(\bpi')
            \leq V_0^N(\lambda\bpi+(1-\lambda)\bpi')
            \leq h(\lambda\bpi+(1-\lambda)\bpi')
            \leq h_j(\lambda\bpi+(1-\lambda)\bpi')\\
            = \lambda h_j(\bpi)+(1-\lambda)h_j(\bpi')
            = \lambda V_0^N(\bpi)+(1-\lambda)V_0^N(\bpi').
    \end{multline*}
Therefore, since $V_0^N(\bpi) \leq h(\bpi), \bpi\in S^M$, we have
    \begin{align*}
        V_0^N(\lambda\bpi+(1-\lambda)\bpi')
            &= h(\lambda\bpi+(1-\lambda)\bpi')
            = h_j(\lambda\bpi+(1-\lambda)\bpi')
    \end{align*}
and $\lambda\bpi+(1-\lambda)\bpi' \in \Gamma_N \cap \{\bpi\in
S^M\,|\,h(\bpi)=h_j(\bpi)\} = \Gamma_N^{(j)}$.  Hence,
$\Gamma_N^{(j)}$ is convex. Since an intersection of  convex sets
is again convex, $\Gamma^{(j)}=\bigcap_{N=1}^{\infty}
\Gamma_N^{(j)}$ is convex.

Thus, we have shown that $\Gamma = \bigcup_{i=1}^{M}\Gamma^{(i)}$
is the union of $M$ non-empty closed convex subsets of $S^M$.
Finally, consider
$\bpi(\lambda)\triangleq\lambda\e_0+(1-\lambda)\e_j$ for
$\lambda\in(0,\frac{c}{a_{0j}+c}]$.  Note that $c>0$ and
$a_{0j}\geq 0$ imply that the interval $(0,\frac{c}{a_{0j}+c}]$ is
non-empty.  The inequality $\lambda \leq \frac{c}{a_{0j}+c}$
implies that $c(1-\lambda) \geq \lambda a_{0j}=
h_j(\bpi(\lambda))$. Hence, $h(\bpi(\lambda))\leq
h_j(\bpi(\lambda)) \leq c(1-\lambda) \leq c(1-\lambda)+(\T
V_0)(\bpi(\lambda))$ and so $V_0(\bpi(\lambda))=h(\bpi(\lambda))$
by Proposition~\ref{P:Dyn-prog-eqn}.  Therefore,
$\Gamma\ni\bpi(\lambda)\notin\{\e_1,\ldots,\e_M\}$. \qed

\subsection{Proof of Lemma \ref{L:gamma-n-V}}

For every $n\geq 0$, the limit $\lim_{N\rightarrow\infty}
\gamma_n^N$ exists a.s.\ by Lemma \ref{L:gamma-equals-gamma-prime}.
So, fix $n$ and take the limit as $N\rightarrow\infty$ of the
expression in Lemma~\ref{L:pg-36}.  Then apply
Lemma~\ref{L:pg-38}(b) to obtain the result.
\qed

\subsection{Proof of Theorem \ref{T:sigma-properties}}

Let us prove part (a) first. Note that
\begin{short-version}
\begin{align*}
  \sigma &= \inf\{n\geq 0 \,|\, \bPi_n \in \Gamma\}
  =  \inf\{n\geq 0 \,|\, V_0(\bPi_n) = h(\bPi_n)\}
  = \inf\{n\geq 0 \,|\, \gamma_n = Y_n\}.
\end{align*}
\end{short-version}
\begin{long-version}
\begin{align*}
  \sigma &= \inf\{n\geq 0 \,|\, \bPi_n \in \Gamma\}
  =  \inf\{n\geq 0 \,|\, V_0(\bPi_n) = h(\bPi_n)\}\\
  &= \inf\left\{n\geq 0 \biggm|
    -c\sum_{k=0}^{n-1}(1-\Pi_k^{(0)})-V_0(\bPi_n)
    =-c\sum_{k=0}^{n-1}(1-\Pi_k^{(0)})-h(\bPi_n)\right\}\\
  &= \inf\{n\geq 0 \,|\, \gamma_n = Y_n\}.
\end{align*}
\end{long-version}
The second equality follows from the definition of $\Gamma$ and the
last equality follows from Lemma \ref{L:gamma-n-V} and the definition
of $Y_n$ \eqref{E:Yn}. Now, fix $n$ and recall from Lemma
\ref{L:backward-induction-eqns} that $\gamma_n =
\max\left\{Y_n,\E[\gamma_{n+1}|\F_n]\right\}$. Then
$\gamma_n=\E[\gamma_{n+1}|\F_n]$ on $\{\sigma>n\}$.  So,
\begin{align*}
  \E[\gamma_{(n+1)\wedge\sigma}\,|\,\F_n]
  &= \E[\gamma_{\sigma}\II{\sigma\leq n}\,|\,\F_n]
  +\E[\gamma_{n+1}\II{\sigma>n}\,|\,\F_n]\\
  &= \gamma_{\sigma}\II{\sigma\leq n}
  +\II{\sigma>n}\E[\gamma_{n+1}\,|\,\F_n] =
  \gamma_{\sigma}\II{\sigma\leq n} +\gamma_n\II{\sigma>n}
  =\gamma_{n\wedge\sigma}.
    \end{align*}
This establishes the martingale property of the stopped process
$\{\gamma_{n\wedge\sigma},\F_n\}_{n\geq 0}$.

To prove part (b), we use part (a) and Lemma
\ref{L:Vn-equal-expected-gamma} to write
\begin{short-version}
\begin{align*}
  -V_0 = \sup_{\tau\in C_0}\E Y_\tau = \gamma_0 =
  \E[\gamma_{n\wedge\sigma}]
  = \E[Y_{\sigma}\II{\sigma\leq n}] +\E[\gamma_{n}\II{\sigma>n}].
    \end{align*}
\end{short-version}
\begin{long-version}
\begin{multline*}
  -V_0 = \sup_{\tau\in C_0}\E Y_\tau = \gamma_0 =
  \E[\gamma_{n\wedge\sigma}]= \E[\gamma_{\sigma}\II{\sigma\leq n}]
   +\E[\gamma_{n}\II{\sigma>n}] \\
  = \E[Y_{\sigma}\II{\sigma\leq n}] +\E[\gamma_{n}\II{\sigma>n}].
    \end{multline*}
\end{long-version}
Since $Y_n = -\sum_{k=0}^{n-1}c(1-\Pi_k^{(0)})-h(\bPi_n)\leq 0$
for every $n$, we can use Fatou's Lemma after taking
$\limsup_{n\rightarrow\infty}$ of both sides to obtain
\begin{short-version}
\begin{align}
  -V_0
  \leq \E[Y_{\sigma}\II{\sigma<\infty}]
  +\E\left[(\limsup_{n\rightarrow\infty}\gamma_{n})
    \II{\sigma=\infty}\right].\label{E:pg-64}
\end{align}
Since $\limsup_{n\rightarrow\infty} \gamma_n
\leq\limsup_{n\rightarrow\infty}-\sum_{k=0}^{n-1}c(1-\Pi_k^{(0)}) =
-\infty$ by Remark ~\ref{R:PiProperties}, and $-V_0>-h>-\infty$, the
inequality \eqref{E:pg-64} implies that $\P\{\sigma=\infty\}=0$.
Therefore, the same inequality becomes $ -V_0 \equiv \sup_{\tau}\E
Y_{\tau} \leq \E Y_{\sigma}$. To show that $\sigma$ is optimal, we
must prove that $\sigma \in C_0$. Since $\sigma<\infty$ a.s., it is
enough to show $\E Y^-_{\sigma}<\infty$, which is equivalent to
showing that $\E\sigma <\infty$ by the discussion before equation
(\ref{E:OptimizationProblemC}).

However, since $\E Y_{\sigma}\ge -V_0>-\infty$, we also have $\E
\sigma<\infty$. Indeed,
\begin{align*}
  -\infty &< \E Y_{\sigma} =
  \E\left[-\sum_{k=0}^{\sigma-1}c(1-\Pi_k^{(0)})-h(\bPi_{\sigma})\right]
  \leq -c\E\sigma +
  c\E\left[\sum_{k=0}^{\infty}\Pi_k^{(0)}\right]\\
  &= -c\E\sigma + c\sum_{k=0}^{\infty}\E \Pi_k^{(0)} \leq -c\E\sigma +
  c\sum_{k=0}^{\infty}(1-p)^k = -c\E\sigma + \frac{c}{p}
\end{align*}
 implies $\E\sigma<\infty$.  Here, the last inequality follows
from Proposition~\ref{P:PiProperties}(a).  This completes the
proofs of parts (b) and (c). \qed
\end{short-version}
\begin{long-version}
\begin{align}
  -V_0 &\leq \limsup_{n\rightarrow\infty}\E[Y_{\sigma}\II{\sigma\leq
    n}]
  +\limsup_{n\rightarrow\infty}\E[\gamma_{n}\II{\sigma>n}]\notag\\
  &\leq \E[Y_{\sigma}\II{\sigma<\infty}]
  +\E\left[(\limsup_{n\rightarrow\infty}\gamma_{n})\II{\sigma=\infty}\right].\label{E:pg-64}
\end{align}
Note that
\begin{align*}
  \limsup_{n\rightarrow\infty} \gamma_n &=
  \limsup_{n\rightarrow\infty}
  \left(-\sum_{k=0}^{n-1}c(1-\Pi_k^{(0)})-V_0(\bPi_n)\right)\\
  &\leq\limsup_{n\rightarrow\infty} \left(-\sum_{k=0}^{n-1}
    c(1-\Pi_k^{(0)})\right) = -\infty.
\end{align*}
where the last equality above follows by Remark~\ref{R:PiProperties}.

Since $-V_0>-h>-\infty$ and $\limsup_{n\rightarrow\infty} \gamma_n =
-\infty$ a.s., the inequality \eqref{E:pg-64} implies that
$\P\{\sigma=\infty\}=0$. Therefore, the same inequality becomes
\begin{align*}
  -V_0 \equiv \sup_{\tau}\E Y_{\tau} \leq \E Y_{\sigma}.
\end{align*}
To show that $\sigma$ is optimal, we must prove that $\sigma \in C_0$.
Since $\sigma<\infty$ a.s., it is enough to show $\E
Y^-_{\sigma}<\infty$, which is equivalent to showing that $\E\sigma
<\infty$ by the discussion before equation
(\ref{E:OptimizationProblemC}).

However, since $\E Y_{\sigma}\ge -V_0>-\infty$, we also have $\E
\sigma<\infty$. Indeed,
\begin{align*}
  -\infty &< \E Y_{\sigma} =
  \E\left[-\sum_{k=0}^{\sigma-1}c(1-\Pi_k^{(0)})-h(\bPi_{\sigma})\right]\\
  &\leq -c\E\sigma + c\E\left[\sum_{k=0}^{\sigma-1}\Pi_k^{(0)}\right] \leq
  -c\E\sigma +
  c\E\left[\sum_{k=0}^{\infty}\Pi_k^{(0)}\right]\\
  &= -c\E\sigma + c\sum_{k=0}^{\infty}\E \Pi_k^{(0)} \leq -c\E\sigma
  + c\sum_{k=0}^{\infty}(1-p)^k = -c\E\sigma + \frac{c}{p}
\end{align*}
 implies $\E\sigma<\infty$.  Here, the last inequality follows
from Proposition~\ref{P:PiProperties}(a).  This completes the
proofs of parts (b) and (c). \qed
\end{long-version}

\bibliography{diagnosis}
\bibliographystyle{abbrv}

\end{document}